\newcommand{\changetwo}[1]{{\color{black}#1}}  % First revision in blue
\newcommand{\revisionthree}[1]{{\color{red}#1}}  % First revision in blue
 \newcommand{\npoints}{ N }
 \newcommand{\pointdim}{ n }
\newtheorem{conjecture}{Conjecture}
\theoremstyle{definition}
\newtheorem{definition}{Definition}[section]
\newenvironment{remark}
  {\remarkex}
  {\endremarkex}
\newtheorem{prop}{Proposition}
\theoremstyle{remark}
\theoremstyle{definition}
\theoremstyle{remark}
\theoremstyle{remark}
 \journal{Applied Mathematics and Computation}
\begin{document}
   \begin{frontmatter}
   
    \title{Learning a robust shape parameter for RBF approximation}

    \author[label1]{Maria Han Veiga}
   
 \affiliation[label1]{organization={Department of Mathematics, Ohio State University},
             country={United States of America}}

 \author[label2]{Faezeh Nassajian Mojarrad}
 \affiliation[label2]{organization={Max Planck Institute for Informatics},
             country={Germany}}

\author[label3]{Fatemeh Nassajian Mojarrad}
             \affiliation[label3]{organization={Department of Computer Science, University of Geneva},
             country={Switzerland}}

    \date{}

    \begin{abstract}
    Radial basis functions (RBFs) play an important role in function interpolation, in particular, when considering an arbitrary set of interpolation nodes. The accuracy of the interpolation depends on a parameter called the \textit{shape parameter}. There are many approaches in the literature on how to appropriately choose it to increase the accuracy of interpolation while avoiding stability issues. However, finding the optimal shape parameter remains a challenge in general. We introduce a new method for determining the shape parameter in RBFs. First, we construct an optimization problem to obtain a shape parameter that leads to a bounded condition number for the interpolation matrix, then, we introduce a data-driven method that controls the condition number of the interpolation matrix to avoid numerically unstable interpolations, while keeping good accuracy. In addition, a fallback procedure is proposed to enforce a strict upper bound on the condition number, as well as a learning strategy to improve the performance of the data-driven method by learning from previously run simulations. Several numerical results are presented to demonstrate the robustness of our strategy in both 1- and 2-dimensional spaces.
\end{abstract}

\end{frontmatter}

\section{Introduction}
\label{sec:introduction}
Radial basis functions (RBFs) have a variety of applications; in function approximation, function interpolation, and numerical representation of solutions of partial differential equations (PDEs). The main advantages of using RBFs for interpolation are:
\begin{itemize}
    \item RBFs\footnote{Assuming positive definiteness of the RBF, as in Definition \ref{def:positive_definite}} lead to a well-posed problem when interpolating on an arbitrary set of points of dimension $\pointdim \in\mathbb{N}$ (e.g., Chapter 6 \cite{Wendland2004});
    \item Treating multi-dimensional problems is simple, as many RBFs are uni-dimensional functions of distance.
\end{itemize}

The interpolation problem reads: given data $\{x_i,f(x_i)\}_{i=1}^N$, with $x_i\in\mathbb{R}^\pointdim$ and $f(x_i)\in\mathbb{R}$, find an interpolation function $s$ such that $s(x_i)=f(x_i)$ for all $i$. Using RBFs, we seek the approximating function $s$ of the form:
\begin{align}
\label{eq:rbf_interpolation}
    &s(x) = \sum_{i=1}^N \lambda_i \phi(||x-x_i||_2;\varepsilon) \mbox{ subject to } s(x_i) = f(x_i), \quad i = 1,...,\npoints,
\end{align}
where $\lambda_i$ are unknown coefficients, $\phi$ is a radial function, $x_i$ are the interpolation nodes and $\varepsilon$ represents the \textit{shape parameter} of $\phi$, which determines the shape of the RBF and significantly influences the accuracy of the approximation (e.g., for multiquadric basis functions \cite{Carlson1991}).

We can write \eqref{eq:rbf_interpolation} as a system of equations
\begin{equation}
\label{eq:system_of_eq}
    \matr{A}(\vec{x},\varepsilon)\vec{\lambda} = \vec{f}(\vec{x}) ,
\end{equation} 
where $\matr{A}(\vec{x},\varepsilon)\in \mathbb{R}^{\npoints \times \npoints}$ is the interpolation matrix with entries $a_{ij}=\phi(||x_i-x_j||,\varepsilon)$, $\vec{x}$\footnote{Throughout the paper, \vec{x} denotes both the set of points  $\vec{x}=\{x_1,...,x_N\}\subset\mathbb{R}^n$ and the matrix that contains each interpolation point as a column vector $\vec{x}=(x_1,...,x_N)\in\mathbb{R}^{n\times N}$ when the ordering of the points matters.}  denotes a set of interpolation points $\{x_1,...,x_N\}\subset \mathbb{R}^n$ and $\vec{f}(\vec{x}) = (f(x_1),...,f(x_N))$ is the unknown function evaluated at the interpolation nodes.

\begin{table}[h]
\centering
\caption{Common positive definite RBFs. \label{tab:rbfs}}
\begin{tabular}{|c| c |} 
\hline
Name of RBF & $\phi(r)$\\
\hline
Gaussian &$\exp(-(\varepsilon r)^2)$ \\
Inverse multiquadric & $\left(1+(\varepsilon r)^2\right)^{-\beta/2}$,$\quad \beta\geq 0$ \\
Mat\'{e}rn & $K_\nu (\varepsilon r) (\varepsilon r)^\nu, \nu > 0$ \\
\hline
\end{tabular}
\label{tab:common rbf}
\end{table}

Some examples of commonly used RBFs can be found in Table \ref{tab:common rbf}, all of  which contain a \textit{shape parameter} $\varepsilon$ multiplying $r$ in their definition. How to correctly choose the shape parameter on a RBF has been a topic of much study; it can be treated as a hyper-parameter that is problem dependent, fixed by trial and error. Another approach is to consider an adaptive strategy that depends on the configuration of the interpolation nodes and on the function values at the interpolation nodes. An early contribution was by Hardy \cite{Hardy1971}, where he suggested an adaptive shape parameter given by
\begin{equation}
\label{eq:hardy}
\varepsilon = \frac{1}{0.815d}, 
\end{equation}
with $d=\frac{\sum_{i=1}^N d_i}{N}$, where $d_i$ represents the distance of the interpolation node  $x_i$ to its nearest neighbor. Later, Franke \cite{Franke1982} proposed the following formulation 
\begin{equation}
\label{eq:franke}
    \varepsilon = \frac{0.8\sqrt{\npoints}}{d_{min}},
\end{equation}
with $d_{min}$ being a diameter of the smallest circle that contains all the given points. The modified Franke method modifies the square-root to read $\varepsilon = 0.8\sqrt[4]{N}/d_{min}$ \cite{modifiedfranke}. Foley \cite{Foley1987} used a similar
value for the shape parameter based on the area of the bounding rectangle to the data. These proposed methods take into consideration the placement of the interpolation nodes $\{x_i\}_{i=1}^\npoints$ but ignore the function values $\{f(x_i)\}_{i=1}^\npoints$.

Foley and Carlson proposed a method by examining statistical properties such as the root-mean-square (RMS) error, taking into account not only the nodal placement but also function values \cite{Carlson1991}. Other methods focusing on reducing the RMS have been proposed \cite{Kansa1992,Rippa1999}. The leave-one-out cross validation (LOOCV) method selects the shape parameter $\varepsilon$ by minimizing the norm of an error vector $\vec{E}(\varepsilon)=(E_1(\varepsilon),...,E_N(\varepsilon))$:
\begin{equation}
\label{eq:loo-cv}
    \arg\min_\varepsilon ||\left(E_1(\varepsilon), ..., E_\npoints(\varepsilon)\right)||_2, \quad \mbox{ where } E_k(\varepsilon) = f(x_k)-s^{(k)}(x_k,\varepsilon),
\end{equation}
where $s^{(k)}$ is the interpolant for a subset of the data that excludes the $(x_k,f(x_k))$ pair. In \cite{goldberg1996}, the LOOCV strategy is adopted to choose the shape parameter for multiquadric basis in the context of approximating PDEs. Later, Rippa \cite{Rippa1999} proposed another algorithm based on LOOCV which does not require the computation of $N$ distinct interpolants $s^{(k)}$, $k=1,...,N$ but rather computes each entry in the error vector through an entry-wise division:
\begin{equation}
    \label{eq:rippa}
    E = \matr{A}^{-1}(\vec{x},\varepsilon)\vec{f}/\mbox{diag}(\matr{A}^{-1}(\vec{x},\varepsilon)).
\end{equation}
This idea has been further developed: \cite{Marchetti2021} considers a more general $k$-fold cross-validation setting, \cite{marchetti2022} extends the method to a stochastic framework. In \cite{Scheuerer2011}, a connection to traditional spatial statistics methods \cite{stein99} is established. The shape parameter is analogous to the kernel's bandwidth, and this can be attained through solving a maximum likelihood estimation (MLE) problem. By assuming the unknown function $f$ is sampled from a Gaussian Process $GP(0,k(x,x';\varepsilon))$,  the vector of the observed data $Z(\vec{x}) = (f(x_1),...,f(x_N))\in\mathbb{R}^N$ is distributed as a multi-dimensional Gaussian distribution with vector mean $\vec{0}$ and covariance matrix $
\matr{K}_{ij}= k(x_i,x_j;\varepsilon)$. Then, the shape parameter $\varepsilon$ can be attained through maximizing the conditional probability $p(Z(\vec{x})|\varepsilon)$ with respect to $\varepsilon$: 
\begin{equation}
\label{eq:loglikelihood}
   \arg\max_\varepsilon \left[ -\ln(\det(\matr{A}(\vec{x},\varepsilon)) - \ln(\vec{f}^T \matr{A}^{-1} 
\vec{f})\right].
\end{equation} 

Both minimization problems \eqref{eq:loo-cv} and \eqref{eq:loglikelihood} are typically solved by a grid search, where the error is computed over a set of candidate values for $\varepsilon$. Recently, \cite{sergey} defines candidate sets iteratively and introduces novel LOOCV based algorithms.

In RBF, a key challenge is solving the problem without encountering the Runge phenomenon. In \cite{Fornberg2007}, the shape parameter was determined with consideration of the limitations imposed by the Runge phenomenon. They suggested that instead of using a constant parameter, the data should be clustered and a separate shape parameter calculated for each cluster. Other efforts in spatially variable shape parameters in RBF have been explored \cite{Kansa1992,zhang2007}, as they offer adaptability to local variations, enabling dynamic adjustments to capture intricate spatial features. 

More recently, techniques from optimization and machine learning have been considered  to improve approximations using RBFs. For example, \cite{koupaei2019} uses a Particle Swarm Optimization algorithm, combined with the well-known Kansa method \cite{Kansa1992} to find a good shape parameter to solve PDEs. In \cite{sun2023}, a random walk algorithm is employed to iteratively propose a shape parameter. In \cite{cavoretto2024-1, cavoretto2024-2}, a Bayesian optimization approach is proposed to fine-tune the shape parameter for various kernel based RBFs and for the Partition of Unity meshfree method. Various greedy approaches \cite{wenzel2022, dutta2021} have been considered to best choose the interpolation centers. Other approaches have been proposed based on \textit{kernel machines} \cite{wenzel2024}, where the interpolation nodes and shape parameters are optimized together. In our previous work, we introduced a technique for training a neural network (NN) to predict an optimal shape parameter for Gaussian and inverse multiquadric RBFs \cite{Mojarrad2023}.
Numerical experiments demonstrated the accuracy and robustness of the method.
However, a specific structure for the cloud points is needed for $\pointdim=2$. In this work, we present a natural continuation of that work by making the following contributions:
\begin{itemize}
    \item We formulate an easy to solve optimization problem to find the shape parameter $\varepsilon$ for any set of points $\vec{x} \subset \mathbb{R}^\pointdim$ of size $N$ while controlling the condition number of the interpolation matrix $\matr{A}$ (Section \ref{sec:optimization});
    \item Using the optimization problem, we create a dataset that is used to train a NN to predict the shape parameter $\varepsilon$ given any set of points $\vec{x} \subset \mathbb{R}^\pointdim$ of size $N$ (Sections \ref{sec:dataset} and \ref{sec:ml});
    \item We provide a fallback procedure that guarantees that the proposed shape parameter generates a well-posed interpolation matrix (Section \ref{sec:fallback}).
    \item We propose a retraining strategy to improve the performance of the data-driven method after simulations are run.
\end{itemize}
In this work, we do not need to explicitly treat the 1-dimensional and 2-dimensional cases differently, instead, we consider the distances between the nodes, thus the task depends on the number of interpolation points $\npoints$.

The paper is structured in the following way: in Section \ref{sec:rbf} we provide a summary of results about RBFs, in Section \ref{sec:optimization} we describe the 1-dimensional optimization problem with respect to $\varepsilon$ that bounds the condition number of the interpolation matrix in order to balance double precision and interpolation error, then, in Sections \ref{sec:dataset} and \ref{sec:ml} we describe the dataset and data-driven method to predict the shape parameter $\varepsilon$, in Section \ref{sec:fallback} we present the fallback procedure that ensures the generated interpolation matrices remain well-conditioned and in Section \ref{sec:retraining} we describe a retraining strategy which enables the data-driven method to improve as simulations progress. In Section \ref{sec:numerical} we present some numerical experiments and test the performance of our proposed model. Lastly, conclusions and future directions are provided in Section \ref{sec:conclusion}.

\section{Radial basis functions}
\label{sec:rbf}
\begin{definition}
\label{def:positive_definite}
A RBF $\phi$ on $[0,\infty)$ is \textbf{positive definite} on $\Omega \subset \mathbb{R}^\pointdim$, if for all choices of sets with finite number of points $\vec{x}:=\{x_1,...,x_N\} \subset \mathbb{R}^n$ and arbitrary $\npoints\in\mathbb{N}$, the symmetric $\npoints \times \npoints$ matrix $\matr{A}$ with entries $\matr{A}_{ij}=\phi(||x_i-x_j||,\varepsilon)$ is positive definite.
\end{definition}
This is a standard assumption and many commonly used RBFs have this property (e.g., Table \ref{tab:rbfs}).

The formulation \eqref{eq:rbf_interpolation} provides the RBF scheme for interpolating on scattered data. The RBF basis can be augmented by polynomials of a certain maximal degree, leading to the following augmented form of $s$:
\begin{equation}
\label{eq:rbf_poly}
   s(x) = \sum_{i=1}^N \lambda_i \phi(||x-x_i||,\varepsilon) + \sum_{k=1}^m \gamma_k p_k(x),
\end{equation}
with $m = \binom{\text{deg}_m+\left(\pointdim-1\right)}{\pointdim-1}$, where $\text{deg}_m$ denotes the monomial degree, $\gamma_k$ is a linear coefficient and $p_k$ represents a basis for $\mathcal{P}_{\text{deg}_m-1}$, which is the space of polynomials of degree at most $\text{deg}_m-1$. In order to ensure the uniqueness of the RBF interpolant \eqref{eq:rbf_poly} of a function $f$, one requires the following equations to be fulfilled:
\begin{align}
\label{eq:rbf_poly_conditions}
    &s(x_i) = f(x_i), \quad i = 1,...,N, \\
    &\sum_{i=1}^N \lambda_i p_\ell(x_i) = 0, \quad \ell = 1,...,m. \nonumber
\end{align}

The addition of polynomials in the approximation space can be beneficial for several reasons:
\begin{itemize}
    \item Their inclusion leads to a better representation of the constant function and low order polynomials and to a better general accuracy of the approximation \cite{flyer2016},
    \item Some RBFs (e.g., the thin-plate spline) are conditionally positive definite, i.e. they only attain a positive definite interpolation matrix in the subspace that satisfies \eqref{eq:rbf_poly_conditions}. 
\end{itemize}
Throughout this paper, we assume positive definite RBFs.
\subsection{Error and stability}

By Definition \ref{def:positive_definite}, we can define a positive definite kernel 
\[\Phi:\Omega\times\Omega \to \mathbb{R}, \quad \Phi(x,x')=\phi(||x-x'||),\] which is a symmetric real-valued function of two variables. Let us define the function space
\[ H_\Phi(\Omega) = \mbox{span}\{ \Phi(\cdot,x): x \in \Omega \}, \]
and the associated bilinear form
\[  \langle \sum_{i=1}^N c_i \Phi(\cdot,x_i), \sum_{k=1}^N d_k \Phi(\cdot,x_k) \rangle_{\Phi} = \sum_{i=1}^N \sum_{k=1}^N c_i d_k \Phi(x_i,x_k),\]
which defines an inner product on $H_\Phi(\Omega)$.
According to the Moore-Aronszajn theorem, there exists a unique Reproducing Kernel Hilbert space, referred to as the native space $\mathcal{N}_\Phi$ associated with $\Phi$. Its norm is induced by the inner product $\langle\cdot,\cdot \rangle_\Phi$ and it can be constructed as detailed in Chapter 10, \cite{Wendland2004}, expressed as $||\cdot||_{\mathcal{N}_\Phi}$. The pointwise error estimate for any point $x\in\Omega$ between an interpolator defined over interpolation nodes $\vec{x}=\{x_1,...,x_N\}$ and  $f\in \mathcal{N}_\Phi$ can be expressed as \cite{Wendland2004}:
\[ |f(x) - s_{f,X}(x)| \leq P_{\Phi,\vec{x}} ||f||_{\mathcal{N}_\Phi},\]
which shows that the interpolation error can be estimated through two independent quantities:
\begin{itemize}
    \item the native space norm of $f$, which measures the smoothness of $f$ and  is independent of the position of the interpolation nodes;
    \item the \textit{power function} $P_{\Phi,\vec{x}}$ which depends only on the basis function $\Phi$, the placement of the interpolation nodes and $x$.
\end{itemize}
It has been observed numerically and studied theoretically there is a link between the approximation error and the condition number of the interpolation matrix (measured by the smallest eigenvalue of $\matr{A}(\vec{x},\varepsilon)$, denoted by $\mu_{\min}$) for many standard RBFs \cite{schaback1995}. It is known as the \textit{trade-off principle} (or also uncertainty principle) in RBF interpolation, that states that ``\textit{there is no case known where the error $P_\Phi,\vec{x}$ and the sensitivity $\mu_{\min}^{-1}$ are both reasonably small}".

For example, considering the Gaussian or inverse multiquadric basis, one can fix the number of interpolation nodes  and increase the value of $\varepsilon$, this will result in improving the condition number of $\matr{A}$ but will reduce the accuracy of the interpolant. Similarly, by decreasing $\varepsilon$ we can increase the  accuracy of a RBF interpolant, but the matrix $\matr{A}$ will become ill-conditioned \cite{Fasshauer2007}.

In this part, we present some existing results on the asymptotic behavior of the interpolation matrix $\matr{A}(\vec{x},\varepsilon)$ as $\varepsilon\to 0$ or $\varepsilon \to \infty$. In this section, we focus on the Gaussian and the inverse multiquadric RBFs:
\begin{align}
\label{eq:basis_gauss}
     \phi_{\mbox{gauss}}(r) &= \exp(-(\varepsilon r)^2) \\ \phi_{\mbox{imq}}(r) &= \frac{1}{\sqrt{1+(\varepsilon r)^2 }} 
\label{eq:basis_imq}
\end{align}

\begin{prop} Let $\matr{A}(\vec{x},\varepsilon)$ be the interpolation matrix on the ordered distinct points $\vec{x}\in\mathbb{R}^{n\times N}$, using basis functions \eqref{eq:basis_gauss} or \eqref{eq:basis_imq} with shape parameter $\varepsilon$. Then, 
    \[ \lim_{\varepsilon\to\infty} \matr{A}(\vec{x},\varepsilon) = \mathcal{I}_N,\]
    
    \[ \lim_{\varepsilon\to 0} \matr{A}(\vec{x},\varepsilon) = \matr{1}.\]
\end{prop}
\begin{proof}
    We note the structure of $\matr{A}(\vec{x},\varepsilon) = \mathcal{I}_N + Q$, where each entry of $Q_{ij} = \phi(r_{ij};\varepsilon)$ and zero diagonal. As $\varepsilon\to 0$, $\phi(r_{ij};\varepsilon)=1$ for $i\neq j$. Similarly,  as $\varepsilon\to \infty$, $\phi(r_{ij},\varepsilon)=0$ for $i\neq j$.
\end{proof}

\begin{prop} Let $\matr{A}(\vec{x},\varepsilon)$ be the interpolation matrix on the ordered distinct points $\vec{x}\in\mathbb{R}^{n\times N}$, using basis functions \eqref{eq:basis_gauss} or \eqref{eq:basis_imq} with shape parameter $\varepsilon$. Then, the eigenvalues of $ \matr{A}(\vec{x},\varepsilon)$ converge to $1$ as $\varepsilon\to\infty$. 
\end{prop}
\begin{proof}
Let $D(a,r)$ represent the disk with center $a$ and radius $r$ on the complex plane:
\[ D(a,r) = \{ x\in \mathbb{C}: |x-a| \leq r\}. \]
For $\matr{A}\in\mathbb{R}^{N\times N}$, the Gerschgorin circles are defined as $D(1,R_i)$ where $R_i = \sum_{j:i\neq j} |m_{ij}|$. By the Gerschgorin circle theorem, every eigenvalue of $\matr{A}$ lies in some $D(m_{ii},R_i)$.

Namely, $R_i = \sum_{j:j\neq i}^N \phi(r_{ij},\varepsilon) \to 0$ as $\varepsilon\to \infty$, so the eigenvalues lie inside the disk with a shrinking radius centered at 1.
\end{proof}

The other asymptotic regime, $\varepsilon\to 0$, is of special interest due to the connection to Lagrange polynomial interpolation, as shown initially in \cite{driscoll2002} for the univariate case, and subsequently for the multivariate cases. This is called the \textit{flat-limit} regime, which assumes that the set of nodes $\vec{x}$ is arbitrary but fixed. In \cite{barthelme2021}, formulas are derived for the determinant and eigenvalues of the interpolation matrix for small $\varepsilon$ that show the monotonicity of the eigenvalues for kernels with different smoothness and in the univariate and multivariate cases. For example, in the univariate case for the Gaussian kernel, they show that the eigenvalues $\mu_k(\varepsilon)$ are of order $\mathcal{O}(\varepsilon^{2(k-1)})$ (Theorem 4.2).

\section{Controlling the interpolation matrix condition number}
\label{sec:optimization}
Here, we describe a procedure to find a shape parameter $\varepsilon$ for any given set of $N$ points $\vec{x}\subset \Omega \subset \mathbb{R}^n$ that maintains the condition number of the interpolation matrix inside a specified range. 

Consider the logarithm (in base 10) of the condition number of the interpolation matrix:
\[ \mbox{logcond}(\vec{x},\varepsilon):= \log_{10} \left( \mbox{cond}_F(\matr{A}(\vec{x},\varepsilon)) \right),\]
where
\[\mbox{cond}_F(\vec{x},\varepsilon) = ||\matr{A}(\vec{x},\varepsilon)||_F||\matr{A}(\vec{x},\varepsilon)^{-1}||_F,\]
computed with respect to the Frobenius norm:
\[ ||\matr{A}(\vec{x},\varepsilon)||_F = \sqrt{\sum_{i=1}^N \sum_{j=1}^N |a_{ij}|^2}.\]

Let us consider the following minimization problem:
\begin{equation}
\label{eq:varepsilon-optimization}
    \varepsilon^* = \arg\min_{\varepsilon\in \mathbb{R}^+} \ell(\mbox{logcond}(\vec{x},\varepsilon)),
\end{equation}
where $\ell$ is a simple piecewise linear convex loss:
\begin{equation}
\ell(x) =  \left\{
\begin{array}{ll}
    x - b
    &  x > b \\
    0 & b\geq x \geq a \\
    -x+a & x < a \\
\end{array}.
\label{eq:loss}
\right. 
\end{equation}
for some $a,b\in\mathbb{R}$. A zero loss is attained when the logarithm (in base $10$) of the matrix condition number is between the range $\left[a,b\right]$. This range is chosen as a balance between double precision and interpolation error, but can be changed according to numerical accuracy requirements, for example. In the numerical experiments shown throughout the paper, we set $a=11, b=11.5$ unless otherwise specified. This means that any shape parameter that produces an interpolation condition number between $10^{11}$ and $10^{11.5}$ will minimize the defined loss. We choose this interval to be small so that the \textit{optimal} interpolation matrices condition numbers is small. We adopt a gradient descent optimization strategy to solve \eqref{eq:varepsilon-optimization}. The minimization problem above can provide an $\varepsilon$ for points with any dimension. 

To study the convergence to a critical point when using gradient-descent-type optimization, we study the objective function defined above, which is given by:
\[ \mathcal{L}(\varepsilon) = \ell \circ \log_{10} \circ \mbox{cond}\left(\matr{A} (\vec{x},\varepsilon)\right).\]
The function $\mathcal{L}$ is composed of a convex function $\ell$, a monotonically increasing function $\log_{10}$ and the function $\mbox{cond}$. Thus if $\mbox{cond}(\matr{A}(\vec{x},\cdot))$ is strictly monotonically decreasing with respect to $\varepsilon$, then the optimization problem is simple. Namely, if the condition number of the generated matrix is above the desired range, then $\varepsilon$ must be increased to diminish the matrix condition number, and vice-versa. Thus, we investigate  the derivative $d_\varepsilon\mbox{cond}$ numerically:
\begin{equation}
\label{eq:cond_deriv}
\begin{split}
     d_\varepsilon \mbox{cond}(\vec{x},\varepsilon) &= ||\matr{A}(\vec{x},\varepsilon)^{-1}||_F \mbox{Tr}\left( \matr{A}(\vec{x},\varepsilon) \matr{A}'(\vec{x},\varepsilon) \right)/||\matr{A}(\vec{x},\varepsilon)||_F\\ &+ ||\matr{A}(\vec{x},\varepsilon)||_F \mbox{Tr}\left( -\matr{A}'(\vec{x},\varepsilon)\left[\matr{A}^{-1}(\vec{x},\varepsilon))\right]^3 \right)/||\matr{A}(\vec{x},\varepsilon)^{-1}||_F,
     \end{split}
\end{equation}
where $\matr{A}'(\vec{x},\varepsilon)$ is the derivative of the matrix $\matr{A}(\vec{x},\varepsilon)$ with respect to $\varepsilon$. In Figure \ref{fig:derivatives}, we show the derivative value (under the transformation $\exp(\cdot)$ for visualization convenience) for different selection of $\vec{x}$ nodes in one and two dimensions, while varying $\varepsilon$. We observe that the sign of the derivative, far from $\varepsilon\to 0$, is always negative in our numerical investigations, pointing towards the condition number being strictly monotonically decreasing with respect to $\varepsilon$. We were, however, unable to rigorously prove the statement except for the simple case of $N=2$ (through a direct computation of the Frobenius norm) and leave it as an open conjecture:

\begin{conjecture}
For the Gaussian or inverse multiquadric RBF, let $\varepsilon_1 > \varepsilon_2$. For any set of distinct points $\vec{x}\subset \Omega \subset \mathbb{R}^n$, let $\matr{A}_1 = \matr{A}(\vec{x},\varepsilon_1)$ and $\matr{A}_2 = \matr{A}(\vec{x},\varepsilon_2)$. Then, $\mbox{cond}(\matr{A}_1) \leq \mbox{cond}(\matr{A}_2).$
\end{conjecture}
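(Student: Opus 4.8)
The plan is to reduce everything to a single majorization statement about the spectrum. Write $\matr{A}_i := \matr{A}(\vec{x},\varepsilon_i)$; by the standing positive‑definiteness assumption each $\matr{A}_i$ is symmetric positive definite with eigenvalues $\lambda_1\geq\cdots\geq\lambda_N>0$, and
\[ \mbox{cond}_F(\matr{A})^2 \;=\; \|\matr{A}\|_F^2\,\|\matr{A}^{-1}\|_F^2 \;=\; \Big(\sum_{k=1}^N \lambda_k^2\Big)\Big(\sum_{k=1}^N \lambda_k^{-2}\Big). \]
The maps $(\lambda_k)\mapsto\sum_k\lambda_k^2$ and $(\lambda_k)\mapsto\sum_k\lambda_k^{-2}$ are symmetric and convex on the positive orthant, hence Schur‑convex; so it suffices to show that for $\varepsilon_1>\varepsilon_2$ the sorted spectrum of $\matr{A}_1$ is majorized by that of $\matr{A}_2$, i.e. $\lambda(\matr{A}_1)\prec\lambda(\matr{A}_2)$. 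Schur‑convexity then gives $\|\matr{A}_1\|_F\leq\|\matr{A}_2\|_F$ and $\|\matr{A}_1^{-1}\|_F\leq\|\matr{A}_2^{-1}\|_F$ simultaneously, and multiplying the two yields the conjecture.

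The device that produces the majorization is to realise the passage from $\varepsilon_2$ to $\varepsilon_1$ as a Hadamard (entrywise) product with a positive semidefinite matrix of unit diagonal. With $r_{ij}=\|x_i-x_j\|$: for the Gaussian, $\phi(r;\varepsilon_1)/\phi(r;\varepsilon_2)=e^{-(\varepsilon_1^2-\varepsilon_2^2)r^2}$, so $\matr{A}_1=\matr{A}_2\circ\matr{C}$ where $\matr{C}$ has entries $e^{-(\varepsilon_1^2-\varepsilon_2^2)r_{ij}^2}$ — a Gaussian kernel matrix, hence positive semidefinite, with unit diagonal. For the inverse multiquadric the ratio is $h(r)=\big((1+\varepsilon_2^2 r^2)/(1+\varepsilon_1^2 r^2)\big)^{\beta/2}$, and I would show this is a positive definite radial function, which by Schoenberg's characterization amounts to showing $u\mapsto\big((1+bu)/(1+au)\big)^{\beta/2}$ is completely monotone on $[0,\infty)$ for $a=\varepsilon_1^2>b=\varepsilon_2^2>0$. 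That follows because the logarithmic derivative of this function equals $-\tfrac{\beta}{2}(a-b)\big[(1+au)(1+bu)\big]^{-1}$, and $\big[(1+au)(1+bu)\big]^{-1}=(a-b)^{-1}\int_0^\infty e^{-us}\big(e^{-s/a}-e^{-s/b}\big)\,ds$ is completely monotone (the density $e^{-s/a}-e^{-s/b}$ is nonnegative since $a>b$); hence $f:=-\log h$ is nonnegative with completely monotone derivative, i.e. a Bernstein function, and $h=e^{-f}$ is completely monotone. So again $\matr{A}_1=\matr{A}_2\circ\matr{C}$ with $\matr{C}\succeq 0$ of unit diagonal.

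Given this factorization, the majorization is a short application of Ky Fan's maximum principle: for symmetric $\matr{M}$ and $1\leq m\leq N$, $\sum_{k=1}^m\lambda_k^{\downarrow}(\matr{M})=\max\{\mbox{Tr}(\matr{M}\matr{X}):\matr{X}=\matr{X}^{\top},\;0\preceq\matr{X}\preceq\mathcal{I},\;\mbox{Tr}(\matr{X})=m\}$. Using the identity $\mbox{Tr}\big((\matr{A}_2\circ\matr{C})\matr{X}\big)=\mbox{Tr}\big(\matr{A}_2(\matr{C}\circ\matr{X})\big)$ and the fact that $\matr{Y}:=\matr{C}\circ\matr{X}$ again satisfies $0\preceq\matr{Y}\preceq\mathcal{I}$ — positivity by the Schur product theorem, and $\mathcal{I}-\matr{Y}=\matr{C}\circ(\mathcal{I}-\matr{X})\succeq 0$ because $\matr{C}$ has unit diagonal — while $\mbox{Tr}(\matr{Y})=\mbox{Tr}(\matr{X})=m$, one gets $\sum_{k=1}^m\lambda_k^{\downarrow}(\matr{A}_1)=\max_{\matr{X}}\mbox{Tr}(\matr{A}_2\matr{Y})\leq\sum_{k=1}^m\lambda_k^{\downarrow}(\matr{A}_2)$ for every $m$, with equality at $m=N$ since $\mbox{Tr}(\matr{A}_1)=\mbox{Tr}(\matr{A}_2)$. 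That is precisely $\lambda(\matr{A}_1)\prec\lambda(\matr{A}_2)$. Incidentally, the same factorization with the refined Schur bounds $\lambda_{\min}(\matr{A}_2\circ\matr{C})\geq\lambda_{\min}(\matr{A}_2)$ and $\lambda_{\max}(\matr{A}_2\circ\matr{C})\leq\lambda_{\max}(\matr{A}_2)$ shows the spectral condition number is monotone in $\varepsilon$ as well, matching the numerical evidence above.

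The step I expect to be the real obstacle is establishing that the ratio $\phi(r;\varepsilon_1)/\phi(r;\varepsilon_2)$ is positive definite: for the Gaussian this is immediate, but for the inverse multiquadric with non‑integer $\beta/2$ it genuinely requires the complete‑monotonicity argument above (equivalently, one may use the Gamma integral $(1+(\varepsilon r)^2)^{-\beta/2}\propto\int_0^\infty s^{\beta/2-1}e^{-s}e^{-s\varepsilon^2 r^2}\,ds$ to write $\matr{A}(\vec{x},\varepsilon)$ as a positive mixture of Gaussian matrices and work from there). Everything downstream — the reduction to majorization, the Ky Fan computation, and the final product — is mechanical once this positive‑definiteness lemma is in place. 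For the Mat\'ern kernel, which is not covered by the conjecture as stated, the corresponding ratio is not obviously positive definite, so that case would need a separate treatment.
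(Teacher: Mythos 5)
Your proposal cannot be compared against the paper's argument for the simple reason that the paper has none: the statement is explicitly left as an \emph{open conjecture}, verified only for $N=2$ by a direct computation of the Frobenius norm. What you have written is, as far as I can check, a correct and complete proof of the general case. The reduction of $\mbox{cond}_F(\matr{A})^2=(\sum_k\lambda_k^2)(\sum_k\lambda_k^{-2})$ to a spectral majorization via Schur-convexity of $\sum_k\lambda_k^2$ and $\sum_k\lambda_k^{-2}$ on the positive orthant is sound (both are sums of convex coordinate functions, and the standing positive-definiteness assumption keeps all eigenvalues positive). The factorization $\matr{A}_1=\matr{A}_2\circ\matr{C}$ with $\matr{C}\succeq 0$ of unit diagonal is immediate for the Gaussian, and your complete-monotonicity argument for the inverse multiquadric ratio checks out: with $a=\varepsilon_1^2>b=\varepsilon_2^2$, the function $f(u)=\tfrac{\beta}{2}[\log(1+au)-\log(1+bu)]$ is nonnegative and $f'(u)=\tfrac{\beta}{2}(a-b)[(1+au)(1+bu)]^{-1}$ is completely monotone (a product of the completely monotone functions $(1+au)^{-1}$ and $(1+bu)^{-1}$, or via your explicit Laplace representation), so $f$ is a Bernstein function, $h=e^{-f}$ is completely monotone in $u=r^2$, and Schoenberg's theorem gives positive semidefiniteness of $\matr{C}$ in every dimension. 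The Ky Fan/Schur-product step is exactly the Bapat--Sunder majorization theorem, $\lambda(\matr{A}\circ\matr{C})\prec\lambda(\matr{A})$ for a unit-diagonal $\matr{C}\succeq 0$, and the equal-trace condition needed for majorization holds because $\phi(0)=1$ for both kernels. Two points worth making explicit in a write-up: the statement needs $\varepsilon_2>0$ (at $\varepsilon_2=0$ the matrix $\matr{A}_2$ is the singular all-ones matrix, so the inequality is vacuous with $\mbox{cond}=\infty$), and your closing observation via Schur's eigenvalue bounds $\lambda_{\min}(\matr{A}_2\circ\matr{C})\geq\lambda_{\min}(\matr{A}_2)$ and $\lambda_{\max}(\matr{A}_2\circ\matr{C})\leq\lambda_{\max}(\matr{A}_2)$ covers the spectral condition number as well, so the conclusion is independent of which norm the conjecture's $\mbox{cond}$ denotes. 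Relative to the paper's $N=2$ computation, your route buys the full statement (indeed monotonicity of every Schur-convex spectral functional under increasing $\varepsilon$), and it would upgrade the conjecture to a theorem; the Mat\'ern exclusion you flag is consistent with the conjecture's scope.
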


In \cite{barthelme2021} it has been shown that when $\varepsilon\in [0,\varepsilon_0]$ for small positive $\varepsilon_0$, the monotonicity condition is true for the spectral norm, as the smallest eigenvalue is of order $\mathcal{O}^{2(N-1)}$, whereas the largest eigenvalue is of order $\mathcal{O}(1)$ (when considering an infinitely smooth kernel and in the univariate case). 

In this work, we consider the inverse multiquadric RBFs \eqref{eq:basis_imq} plus the constant polynomial (i.e. $m=1$) as our interpolation basis for all tested methods, in order to improve the approximation quality of our interpolator. We verify numerically that the condition number for this interpolation matrix is strictly monotonically decreasing with respect to $\varepsilon$, as shown in Figure \ref{fig:numerical-derivative}, showing enhanced numerical stability near small $\varepsilon$.

\begin{figure}[h]
\begin{center}
\includegraphics[width=1.0\textwidth]{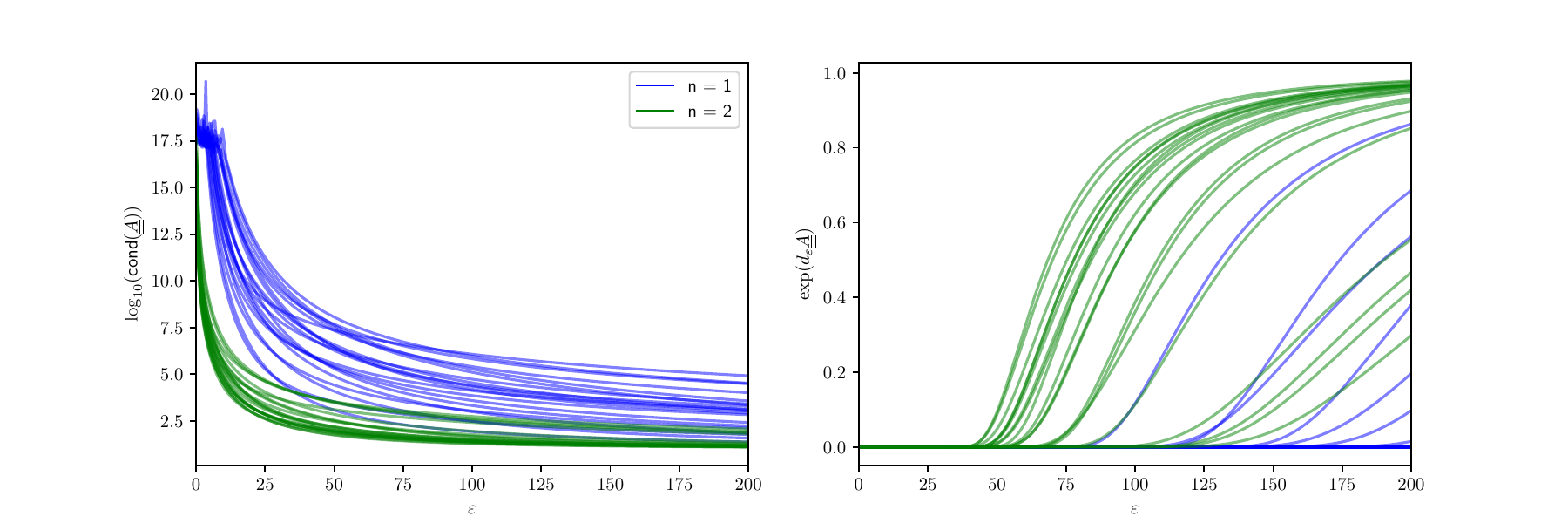}
\caption{Left: Logarithm (base 10) of the interpolation matrix's condition number as a function of the shape parameter $\varepsilon$ for randomly generated sets of points of size 10 in 1-dimension (blue) and 2-dimensions (green). Right: Exponential of the derivative of the matrix condition number as a function of the shape parameter $\varepsilon$ for randomly generated sets of points, which is bounded in $[0,1]$, showing that the derivative is always negative.}
\label{fig:derivatives}
\end{center}
\end{figure}

\begin{figure}[h]
\begin{center}
\includegraphics[width=1.0\textwidth]{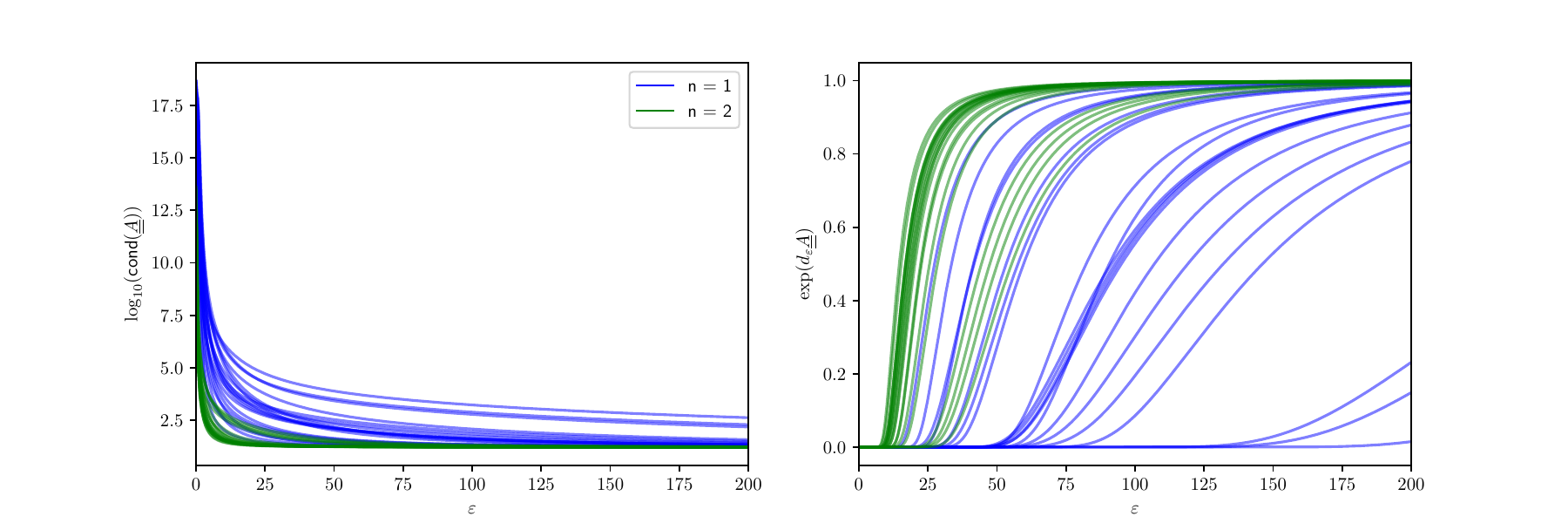}
\caption{Left: Logarithm (base 10) of the interpolation matrix's condition number (including the constant polynomial as basis) as a function of the shape parameter $\varepsilon$ for randomly generated sets of points of size 10 in 1-dimension (blue) and 2-dimensions (green). Right: Exponential of the derivative of the matrix condition number (including the constant polynomial as basis) as a function of the shape parameter $\varepsilon$ for randomly generated sets of points, which is bounded between $[0,1]$, showing that the derivative is always negative.}
\label{fig:numerical-derivative}
\end{center}
\end{figure}

\changetwo{In Figure \ref{fig:sec3_optimization}, we compare the shape parameter attained by this optimization procedure with some traditional methods in Section \ref{sec:introduction}. In particular, we consider the following methods:
\begin{itemize}
    \item The \textbf{Hardy} method, introduced by Hardy in \cite{Hardy1971}, which uses \eqref{eq:hardy} to determine the shape parameter;
    \item The \textbf{Franke} method \cite{Franke1982}, which specifies the shape parameter as in \eqref{eq:franke};
    \item The \textbf{Rippa} method \cite{Rippa1999}, which seeks to minimise the optimization problem \eqref{eq:loo-cv} by expressing the quantity of interest (error) with \eqref{eq:rippa};
    \item The \textbf{MLE} method \cite{Scheuerer2011} that seeks to solve the optimization problem \eqref{eq:loglikelihood}.
\end{itemize}
For the \textbf{Rippa} and\textbf{ MLE} methods, the search interval [0.001, 200] is considered, with 2000 equidistant points, including endpoints.

We consider the 1-dimensional function:
\begin{equation}
    \label{eq:1d-test-fct}
f(x)= \cos(2/\delta \pi x)+x^2+x,
\end{equation}
where $\delta = 0.01, 0.1, 1 , 5$, which also defines the length of the interval where the interpolation nodes are defined. Table \ref{tab:mse-example-1} contains the $L_2$-error between the true function and the interpolator, as computed in \eqref{eq:mse}. We can see that the \textit{Optimization} procedure, as presented above, has led to the smallest $L_2$-error for most of the considered intervals.
}

\begin{table}[h]
\centering
\caption{$L_2$-error of example \eqref{eq:1d-test-fct} (using $M=100$ uniformly sampled points).\label{tab:mse-example-1}}
\begin{tabular}{|c|c|c|c|c|c|} 
\hline
Interval & Rippa & Optimization & MLE & Hardy & Franke \\
\hline
$[0, 5]$ & $5.3000$e-01 & \textbf{$\textbf{7.0316}$e-02} & $3.6670$e+00 & $1.3356$e+01 & $7.9681$e-02 \\
$[0, 1]$ & \textbf{$\textbf{2.6055}$e-07} & $2.1967$e-06 & $1.8776$e-03 & $1.0724$e-01 &$ 2.6927$e-06 \\
$[0, 0.1]$ & $6.2084$e-06 & \textbf{$\textbf{4.3817}$e-09} & $2.6672$e-06 & $1.4293$e-03 & $6.4631$e-06  \\
$[0,0.01]$ & $2.2845$e-04 & \textbf{$\mathbf{8.4440}$e-08} & $2.5863$e-05 & $5.7829$e-04 & $3.0032$e-06 \\
\hline
\end{tabular}
\end{table}

\begin{figure}[h]
\begin{center}
\subfigure[interpolation nodes in $(0,0.01)$]
{\includegraphics[width=0.48\textwidth]{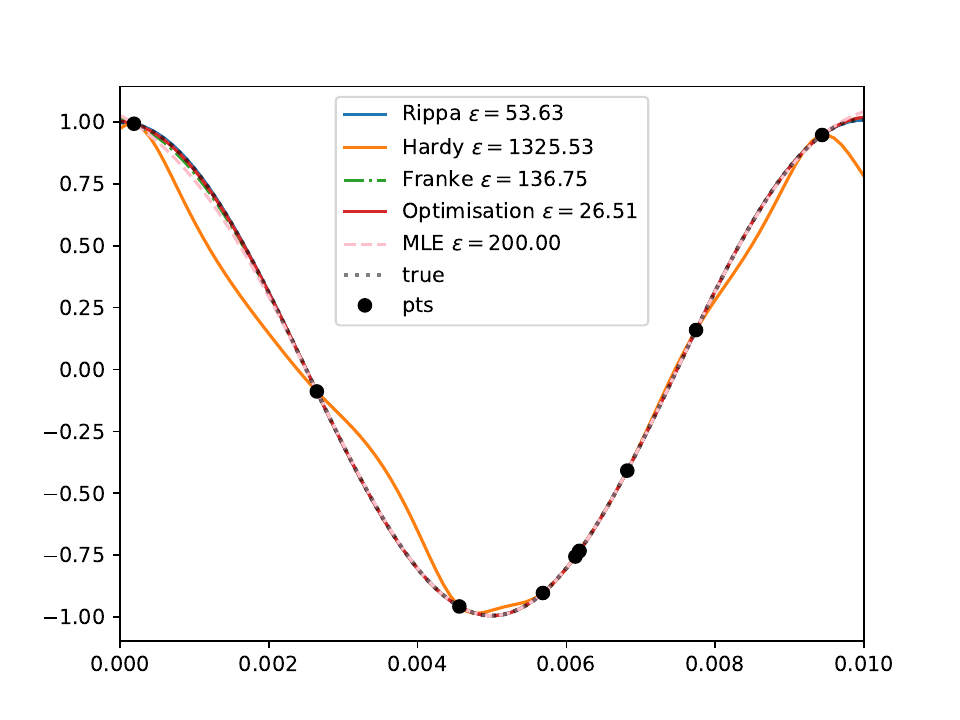}}
\subfigure[interpolation nodes in $(0,0.1)$]
{\includegraphics[width=0.48\textwidth]{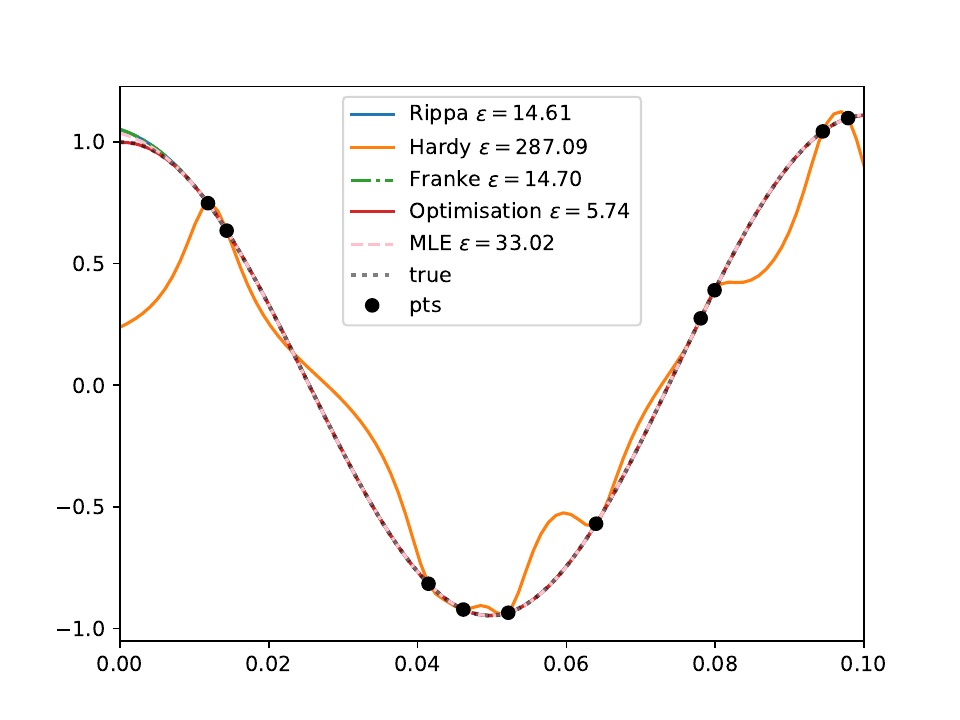}}\\
\subfigure[interpolation nodes in $(0,1)$]
{\includegraphics[width=0.48\textwidth]{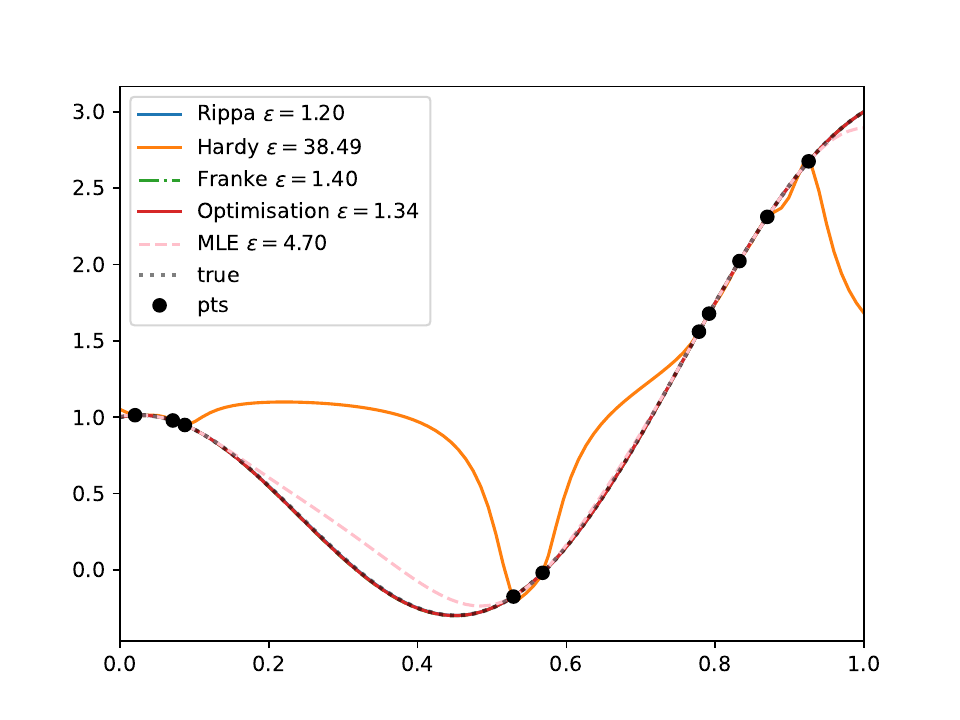}}
\subfigure[interpolation nodes in $(0,5)$]
{\includegraphics[width=0.48\textwidth]{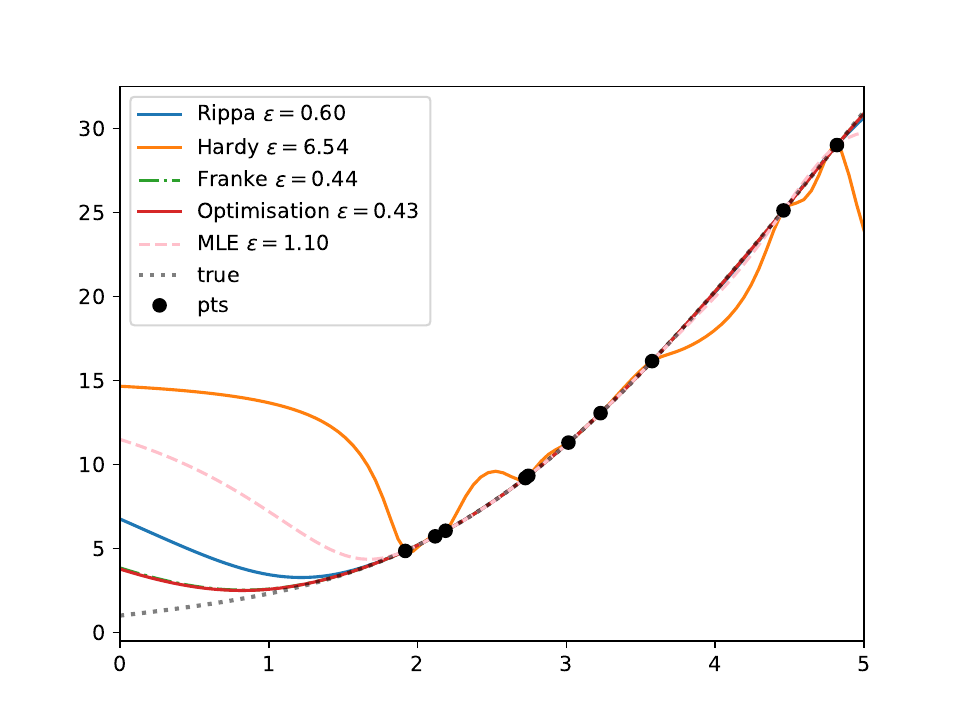}}\\
\end{center}
	\caption{Visual comparison between several adaptive methods: Rippa, Hardy, Franke and MLE methods, and the optimization method presented above on the 1-dimensional function \eqref{eq:1d-test-fct}, varying the interpolation nodes interval length. For the Rippa and MLE methods, we considered the search interval $[0.001,200]$ with 2000 equidistant points.}
	\label{fig:sec3_optimization}
\end{figure}

\section{Learning model}
\subsection{Dataset generation}
\label{sec:dataset}

We generate clouds of points of size $N=10$ embedded in 1-dimension and 2-dimensions randomly by uniformly sampling from a defined interval. For each set of points $\vec{x}$, we use the optimization described in Section  \ref{sec:optimization} to find a suitable $\varepsilon$, generating the pair $(\vec{x},\varepsilon)$, which constitutes a training point. We generate 6300 training points in 1-dimension and 12000 training points in 2-dimensions. We vary the range of the domain that contains a set of points $\vec{x}$, between $[0,0.001]$ to $[0,1]$ to guarantee a good coverage of lengthscales and we also use different interpolation functions to have a sense of the error of the method, although we did not use the interpolation error information for the dataset generation. More details can be found in \ref{app:dataset}, where the datasets are summarized in Tables \ref{table:dataset_1d} and \ref{table:dataset2d} and the algorithm used for generating the dataset is described in Algorithm \ref{alg:data_generation_1d}.

\begin{remark}
While the dimensionality of the interpolation nodes does not change the optimization problem, as it depends on the distance matrix of the points, we still initially distinguished the 1-dimensional from the 2-dimensional setting to understand the learning problem.
\end{remark}

\begin{remark}
The generation of the dataset labels can be replaced. For example, one could use the well-known LOOCV procedure to find optimal $\varepsilon$, at the cost of extra computation and dependency on the function values, which would make the method not agnostic to the function to be interpolated. In particular, this restriction can become relevant in the context of RBF-generated finite difference (RBF-FD) because the solution is not known \textit{a priori} for all times.  
\end{remark}

\subsection{Learning strategy}
\label{sec:ml}

We seek a mapping $f$ from a set of $N$ interpolation nodes $\vec{x}\subset\Omega\subset\mathbb{R}^n$ to an optimal shape parameter $\varepsilon$ such that the interpolation problem is stable and accurate, i.e.   the interpolation matrix $\matr{A}(\vec{x},\varepsilon)$, formed using RBFs with set of interpolation nodes $\vec{x}$ and parameter $\varepsilon$, has the condition number within the specified matrix condition number range. 
\begin{equation}
    \label{eq:approx function}
    f:\vec{x}\in \mathbb{R}^{N\times n} \to \varepsilon \quad \text{s.t. } \text{logcond}(\matr{A}(\vec{x},\varepsilon)) \in [a,b].
\end{equation}
We will approximate \eqref{eq:approx function} through a supervised learning strategy. As a first step, we generate the 1-dimensional and 2-dimensional datasets, see Section \ref{sec:dataset}. Then, given a set of points
$\{x_i \}_{i=1}^N$, we define the distance matrix as follows:
\begin{equation}
\label{eq:distance matrix}
D(\vec{x})=
  \begin{bmatrix}
  ||x_1-x_1|| &   ||x_1-x_2||&  \dots &  ||x_1-x_N||\\
      ||x_2-x_1|| &   ||x_2-x_2||&  \dots &  ||x_2-x_N||\\
    \dots  &  \dots& \dots &  \dots \\
 ||x_N-x_1|| &   ||x_N-x_2||&  \dots &  ||x_N-x_N||    
  \end{bmatrix}.
\end{equation}
The distance matrix does not change with the dimension of the points $x_i$, only with the number of points considered. Furthermore, because the distance matrix is symmetric with zero diagonal, we can consider the features to be given by the inverse of upper part\footnote{Or lower part.} of the matrix $D$, namely, we define the features to be given by $\vec{d}=(\frac{1}{d_{ij}} )_{1\leq i \leq N,i< j\leq N}$ yielding a vector of dimension $N_{in}=N(N-1)/2$. We also sort the cloud of points before generating the distance matrix.

Due to the high-dimension of the input space, we approximate $f$ in \eqref{eq:approx function} with a fully connected neural network (NN).
We assume that the map $f$ 
 can be approximated by $\mathcal{F}$  in the form
\begin{equation}
 \mathcal{F}(\vec d) := \sigma_{k}(...\sigma_1(W_1 \vec{d} + b_1)),
\end{equation}
where $\vec d$ is the input vector, $\{ \sigma_i\}_{i=1}^k$ are the activation functions, $\{W_i\}_{i=1}^k$  are the matrices in $\mathbb{R}^{N_i\times N_{i-1}}$ and $\{b_i\}_{i=1}^k$ are vectors in $\mathbb{R}^{N_i}$. $\{W_i\}_{i=1}^k$ and $\{b_i\}_{i=1}^k$ are learnable parameters, represented generically as $w$. Note that $N_0$ and $N_k$ denote the input and output dimensions, respectively. In this case, $N_0=\frac{N(N-1)}{2}$ and $N_k=1$. 
The architecture of $\mathcal{F}$ is specified in Table \ref{tab:NN_arch}. Each column of Table \ref{tab:NN_arch} represents the following:
\begin{enumerate}
    \item Layer: This column indicates the layer index in the network, starting from the input layer (Layer 1) and moving sequentially through each subsequent layer in the architecture.
    \item Input size: This column specifies the dimensions of the input to each layer.
    \item Output size: This column shows the dimensions of the output from each layer, consistent with the transformations applied by that layer.
    \item Activation: This column describes the activation function applied to the output of the layer.
\end{enumerate}

\begin{table}[h]
\begin{center}
\caption{Architecture of NN, assuming $N=10$. \label{tab:NN_arch}}
\begin{tabular}{|c c c c||} 
 \hline
 Layer & Input size & Output size & Activation \\ [0.5ex] 
 \hline\hline
 Layer 1 & $N_0$ = 45  & $N_1=64$ & ReLU \\
 \hline
 Layer 2 & $N_1=64$ & $N_2=64$ & ReLU \\
 \hline
 Layer 3 & $N_2=64$ & $N_3=64$ & ReLU \\
 \hline
 Layer 4 & $N_3=64$ & $N_4=32$ & ReLU \\
 \hline
 Layer 5 & $N_4=32$ & $N_5=16$ & ReLU \\ 
 \hline
 Layer 6 & $N_5=16$ & $N_6=1$ & Linear \\
 \hline
\end{tabular}
\end{center}
\end{table}

We consider the training paradigm known as supervised learning, in which the desired output value for the points in the training set are known in advance. The goal of the training is to minimize the error between the predictions and the actual values. We use a simple mean squared loss as the loss function:

\begin{equation}
    \label{eq:leastsquareerror}
    \mathcal{L}(w)=\frac{1}{Q}\sum_{i=1}^{Q} (\varepsilon_i - \mathcal{F}(\vec{d}_i;w))^2,
\end{equation}
where $w$ denotes the learnable parameters and $\vec{d}_i$ denotes the feature generated from points $\vec{x}_i$, over a training dataset of size $Q$. The loss function is augmented with a $L_2$ regularization of the networks weights with parameter $\alpha = 0.00005$, which is a standard way to prevent overfitting of the training data \cite{Nowlan1992}
\[
\mathcal{L}(w)=\frac{1}{Q}\sum_{i=1}^Q (\varepsilon_i - \mathcal{F}(\vec{d}_i;w))^2+\alpha ||w||_2 ^2.
\]
Another commonly used approach is the early stopping of the training, which makes use of a validation dataset \cite{Nowlan1992}. After each epoch, the performance of the NN is assessed on the validation dataset, and the loss of its outputs is measured.  Ideally, as training progresses, the loss on the validation set decreases, indicating the NN's ability to generalize. However, beyond a certain point, the validation loss starts increasing after a point due to overfitting. The training is terminated if the loss on the validation dataset increases for $P$ consecutive epochs. We set the patience to $P=200$.
The loss is minimized using the Adam optimizer, with parameters $\beta_0 = 0.9$, $\beta_1 = 0.999$ \cite{Kingma2014} and learning rate $\eta = 10^{-5} $.

\begin{remark}
    Other architectures to approximate the map in \eqref{eq:approx function} have been considered, such as Convolutional Neural Networks and Encoder-Decoder networks but there was no significant benefit to using more complicated architectures, so we omit those results from this work. We also considered the full distance matrix \eqref{eq:distance matrix} as our features, and that lead to worse results.
\end{remark}

\subsubsection{Network training}
We train one NN for both 1-dimensional and 2-dimensional test cases.
The training is performed using a stochastic optimization algorithm that processes mini-batches of size $Q_b$ from the training set to perform a single optimization step. Specifically, the entire training set, containing $Q$ data points, is shuffled, and then mini-batches with $Q_b<Q$ samples are sequentially extracted to complete $Q/Q_b$ optimization steps. Once all the data points in the training set are used, one epoch is completed. The training set is then reshuffled, and the process is repeated for multiple epochs. Shuffling the data introduces stochasticity, which has been observed to accelerate convergence.
The network is trained using  mini-batches of size $Q_b=64$. Training and validation losses are shown in Figure \ref{fig:loss}, indicating no overfitting. The training loss indicates that the model has converged, and no longer significantly improving with more epochs.

\begin{figure}[h]
\begin{center}
{\includegraphics[width=0.45\textwidth]{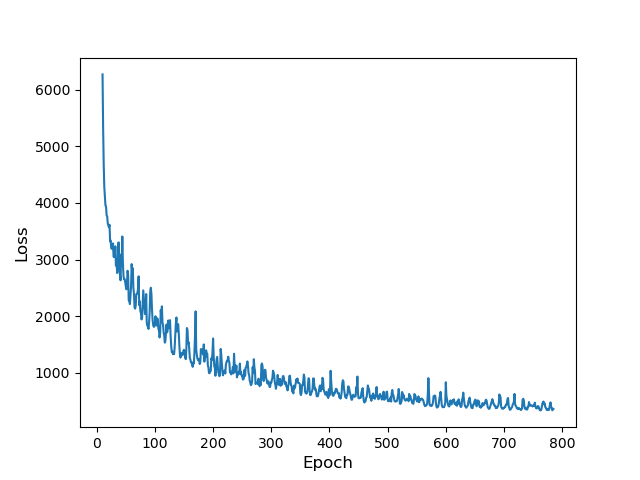}
 \label{fig:train_loss}}
{\includegraphics[width=0.45\textwidth]{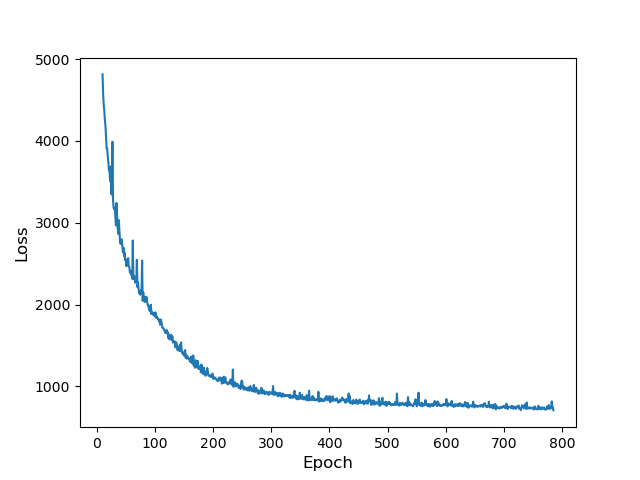}
 \label{fig:validation_loss}}
\end{center}
	\caption{Left: Training loss. Right: Validation loss.}
 \label{fig:loss}
\end{figure}

Once the network has been trained, we will use the same network for all the results shown in Section \ref{sec:numerical}.

\subsection{Performance estimation and fallback procedure}
\label{sec:fallback}
To have an estimate of the performance of the network's output $\hat{\varepsilon}$, we can evaluate the condition number of the interpolation matrix $\matr{A}(\vec{x},\hat{\varepsilon})$ and verify if it is below a certain acceptable threshold. In particular, it is desirable to avoid generating interpolation matrices which are ill-conditioned. Thus, if the logarithm of the condition number of the generated interpolation matrix has a value larger than an acceptable threshold $\theta$ (which is a free parameter), we propose a strategy to correct the shape parameter by calling the optimization procedure defined in Section \ref{sec:optimization} and replacing the predicted $\hat{\varepsilon}$. The full algorithm is shown in Algorithm \ref{alg:fallback}.

\begin{remark}
Other performance indicators could be used. For example, in the case of pure interpolation, the LOOCV error $E$ can be considered, and if it is above a user defined threshold, the LOOCV procedure can be used to replace the predicted $\hat{\varepsilon}$. This was not explored in this work and we leave it to future work.
\end{remark}

\subsection{Learning from simulations}
\label{sec:retraining}
In this section a continual learning setup is described. This allows the data-driven method to improve as simulations are run. Often, the performance of a data-driven method relies on a carefully curated dataset: if test examples are far from the training set examples, it is not guaranteed that the data-driven method will perform well. 

This is an instance of online learning \cite{onlinelearning}, where the data-driven method is retrained on unseen examples that are generated as the method is used in simulation. After the initial training of the NN, the NN is used to solve a variety of tasks, as shown in Sections \ref{sec:interpolation} and \ref{sec:pdes}. Using the fallback procedure, we identify data-points where the network's predictions are  unsatisfactory (based on the condition number of the generated interpolation matrix, as described in \ref{alg:fallback}) and using these data-points, we generate a new dataset that is then used to retrain and improve the NN.

Formally, the setup is the following: let $w_i$ denote the trained parameters of the neural-network and $\mathcal{D}_2 = \{\vec{x}_i, \varepsilon_{opt,i} \}_{i=1}^{m_2}$ denote the newly generated dataset. We consider two strategies to retrain the network:
\begin{itemize}
    \item \textit{Naive} retraining: the weights $w_i$ are updated through minimizing \eqref{eq:leastsquareerror} considering the dataset $\mathcal{D}_2$. This type of update is prone to \textit{catastrophic interference} \cite{Kirkpatrick2017}, where the neural network's performance on previously trained data points can be diminished.
    \item Model merging: a new model is instantiated with weights $w_i$ and trained with through minimizing \eqref{eq:leastsquareerror} considering the dataset $\mathcal{D}_2$. This yields a new set of parameters $w_t$. Then, the weights $w_i$ and $w_t$ are averaged  to generate a new set of parameters $w_f$, which are the new parameters of the data-driven model.
\end{itemize}

\section{Numerical experiments}
\label{sec:numerical}
To test the shape parameter selection method, we evaluate the performance of the method on interpolation tasks and by integrating the method with partial differential equations (PDEs) solvers.  We compare our proposed method with other well-known adaptive shape parameter methods such as Hardy \cite{Hardy1971}, Franke \cite{Franke1982}, modified Franke, Rippa's LOOCV \cite{Rippa1999} and MLE \cite{Scheuerer2011} methods.

Then, we evaluate our approach with the performance indicator, which is a hybrid approach that combines the NN's prediction and the optimization problem defined in Section \ref{sec:optimization}, to guarantee that the proposed method remains robust. Lastly, we  use the retraining strategy, which is computationally faster than the performance indicator.

In the following experiments, we consider the inverse multiquadric kernel with the constant polynomial (i.e. $m=1$) and $N=10$ points to build the interpolation, following the results of \cite{Mojarrad2023}.

We report the $L_2$-error between the approximation via interpolation using a given shape parameter $\varepsilon$ and the exact solution:
\begin{equation}
\label{eq:mse}
     L_2\text{-error}(u_{exact},u_{approx}) = \sqrt{\frac{1}{M}\sum_{i=1}^M|u_{exact}(\tilde{x}_i)-u_{approx}(\tilde{x}_i) |^2},
\end{equation}
where $u_{approx}$ and $u_{exact}$ denote the approximate and exact  solutions, respectively, and $M$ represents the number of evaluation points $\tilde{x}$ considered.

In \cite{Mojarrad2023}, we demonstrated a  clear advantage of the variable shape parameter strategy  using adaptive NN compared to the constant shaped RBF. Also, we observed that the error eventually blow-up in most the cases for constant shape parameter strategy. Hence, our results are only compared with the Hardy, Franke, modified Franke, Rippa and MLE
approaches, which are all adaptive methods. The comparison with the Rippa and MLE approaches will only be done for interpolation tasks, where the function to be interpolated is known \textit{a priori}, as this is a necessity for these methods.

The code developed to generate the datasets as well as the generated datasets, the set up of the NN, and the numerical experiments are in the Github repository \cite{ourgithub}.

\subsection{Interpolation}
\label{sec:interpolation}
\subsubsection{1-dimensional}
\label{sec:interpolation_1d}

We discretize the space domain by the equispaced and non-equispaced centers for 1-dimensional problems, using the   zeros of the Chebyshev polynomial of the first kind with degree $N=10$ remapped to the interval $[0,1]$. The refined mesh is created by adding midpoints. For interpolation task, we  construct the interpolation using sets of
$N$ points, ensuring the boundary node overlaps between clusters.

We examine the performance of the our approach for the function $f_1$, which is the combination of the exponential and trigonometric functions:
\begin{equation}
    \label{eq:test_case_1d:1}
    f_1(x)=\exp(sin(\pi x)), \quad x  \in [0,1].
\end{equation}    

In Figure \ref{fig:inter1d_f1}, we show the error convergence plot with respect to the number of evaluation points when employing shape parameters derived from our NN-based approach and the adaptive methods, like Rippa, Hardy, Franke and modified
Franke   approaches. Our strategy appears to yield a better approximation to interpolate function $f_1$ in most cases, in particular when the interpolation points become very close to each other, i.e. when M is large. Both the Rippa and MLE methods require a list of candidates for $\varepsilon$. We consider the following set of candidate values
\begin{equation}
\label{eq:candidate_epsilons}
    \mathcal{C} := \mathcal{B} \cup \{50,75,100,200,500,1000 \},\footnote{Inspired by the interval considered in the python package PySMO \cite{pysmo}}
\end{equation}where $\mathcal{B}$ contains 200 equidistant points in the interval $[0.001,300]$, starting at 0.001. One can consider a larger grid to search over, at the expense of higher computational cost. We also noted that as the interpolation nodes become closer, the interpolation matrix becomes numerically ill-conditioned for some $\varepsilon$'s, affecting the performance of the method. In \ref{ap:rippa_mle}, we explore the impact of the candidate set and the restriction of the maximum condition number for the interpolation matrix. Based on these findings, we only consider $\varepsilon$'s that lead to a matrix with condition number smaller than $10^{16}$. 

\begin{figure}[h]
\begin{center}
\subfigure[Equispaced]
{\includegraphics[width=0.45\textwidth]{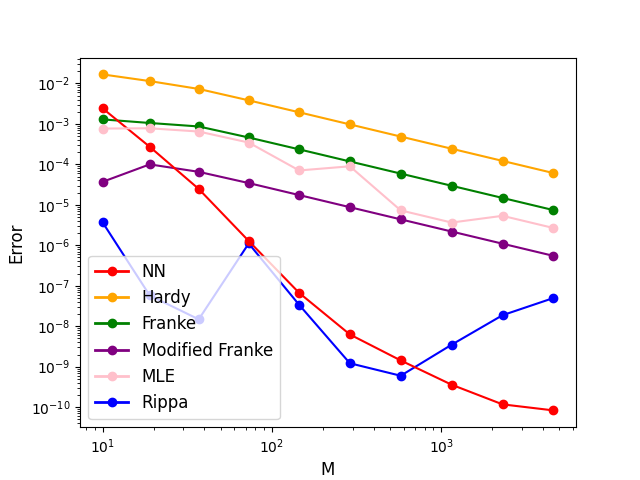}}
\subfigure[Non-equispaced]
{\includegraphics[width=0.45\textwidth]{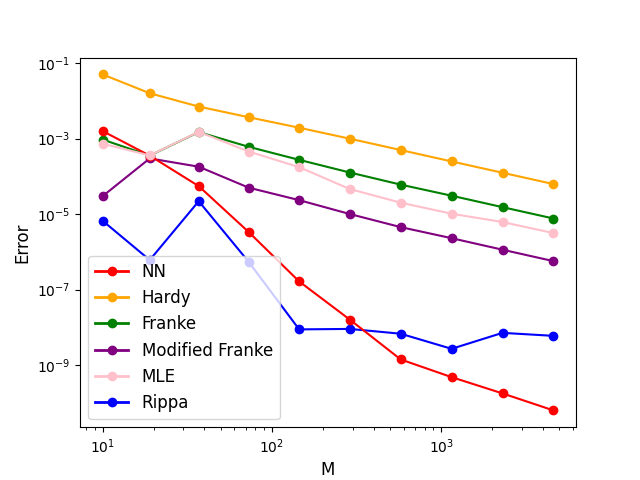}}
\end{center}
	\caption{Plot of error convergence for the 1-dimensional interpolation of $f_1$.}
	\label{fig:inter1d_f1}
\end{figure}

We  consider another test example, which is given by:
\begin{equation}
\label{eq:runge_function}
    f_2(x)=\frac{1}{16x^2+1},~x  \in [0,1].
\end{equation}
We can see in Figure \ref{fig:inter1d_f2} the errors with respect to the number of evaluation points using different approaches. It can be easily seen that in both cases the adaptive NN is able to provide very good approximation for this task.

\begin{figure}[h]
\begin{center}
\subfigure[Equispaced]
{\includegraphics[width=0.45\textwidth]{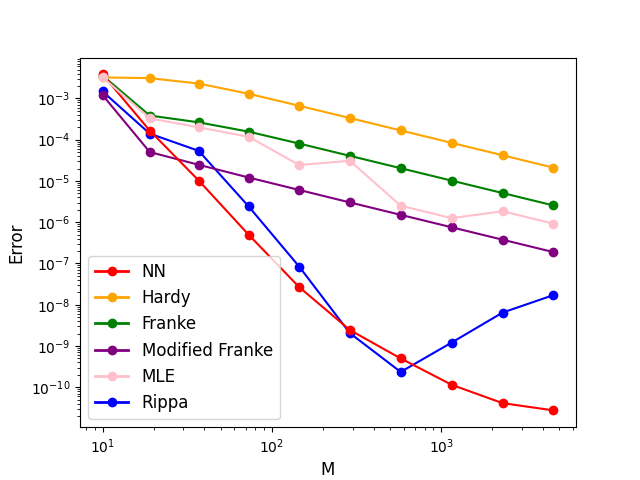}}
\subfigure[Non-equispaced]
{\includegraphics[width=0.45\textwidth]{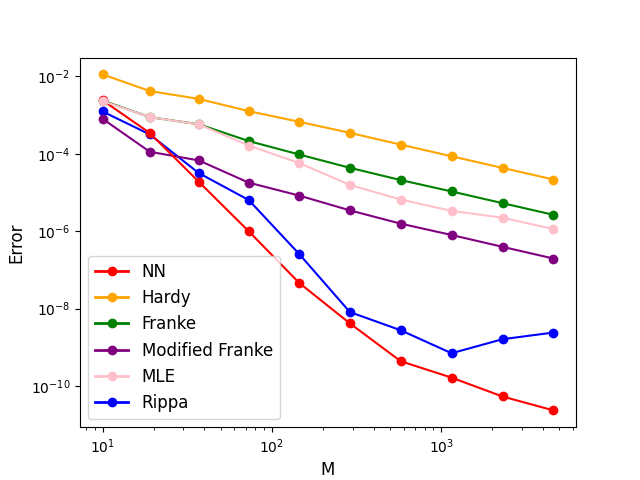}}
\end{center}
	\caption{Plot of error convergence for the 1-dimensional interpolation of $f_2$.}
	\label{fig:inter1d_f2}
\end{figure}

Lastly, we consider a test example that consists of a piecewise constant problem:
\begin{equation}
\label{eq:piecewise_constant}
    f_3(x)=
\begin{cases}
1,&\text{if } x> 0.5, \\
0, &\text{otherwise},
\end{cases},~x  \in [0,1].
\end{equation}
In Figure \ref{fig:inter1d_f3}, we observe how all methods converge similarly, with the NN approach producing slightly lower errors.

\begin{figure}[h]
\begin{center}
\subfigure[Equispaced]
{\includegraphics[width=0.45\textwidth]{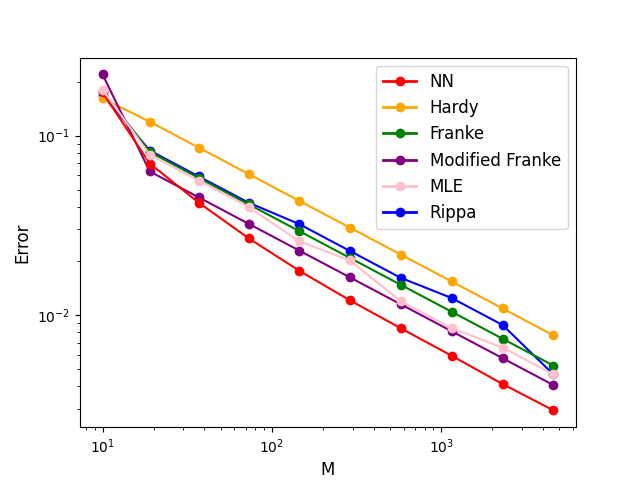}}
\subfigure[Non-equispaced]
{\includegraphics[width=0.45\textwidth]{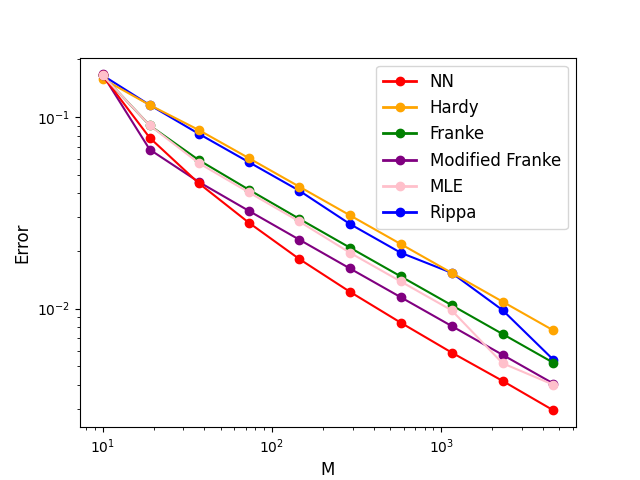}}
\end{center}
	\caption{Plot of error convergence for the 1-dimensional interpolation of $f_3$.}
	\label{fig:inter1d_f3}
\end{figure}

\subsubsection{2-dimensional}
\label{sec:interpolation_2d}
In order to  observe error convergence in two dimensions, we use a regular grid to create  the sets of interpolation and evaluation points. We take the oversampling parameter to be $4$, see \cite{Mojarrad2023} for more details.

We consider the well-known Franke function \cite{Fasshauer2007,Fasshauer2002} (as shown in Figure \ref{fig:inter2d_f4}):
\begin{equation*}
\begin{split}
f_4(x,y)&=\frac{3}{4} \exp \left(-\left( \frac{(9x-2)^2+(9y-2)^2}{4}\right)\right)+\frac{3}{4} \exp\left(-\left( \frac{(9x+1)^2}{49}+\frac{(9y+1)^2}{10}\right)\right)  
 \\&+\frac{1}{2} \exp \left(-\left( \frac{(9x-7)^2+(9y-3)^2}{4}\right)\right)
 -\frac{1}{5} \exp \left(-\left( (9x-4)^2+(9y-7)^2\right)\right) , \\& (x,y)\in [0,1]\times [0,1].
 \end{split}
\end{equation*}

We evaluate the approximation error of the Hardy, Franke, modified Franke, Rippa, MLE and the NN-based approach. For the Rippa and MLE methods, we consider the following set of candidate values for $\varepsilon$:
\begin{align} 
 \label{eq:candidate_epsilons_2d}
A&:= \{0.001, 0.002, 0.005, 0.0075, 0.01, 0.02, 0.05, 0.075, 0.1, 0.2, 0.5, 0.75,1,2, 5, 7.5,  \nonumber \\
& \quad \quad 10, 20.0, 50, 75, 100, 200, 500, 1000\}. 
\end{align}
We do not consider the set $\mathcal{C}$ as it becomes computationally very costly with the increase of the dimension, as the search for the optimal $\varepsilon$ has to be performed for each basis function.  We show in Figure \ref{fig:inter2d_test3}, that while the approximation error decreases in all methods as we refine the mesh, the error using the NN method is significantly smaller than the other considered strategies. Both the Rippa and MLE methods's performance can be improved at the expense of increasing or adapting the candidate set.
\begin{figure}[h]
\begin{center}
\includegraphics[width=0.45\textwidth]{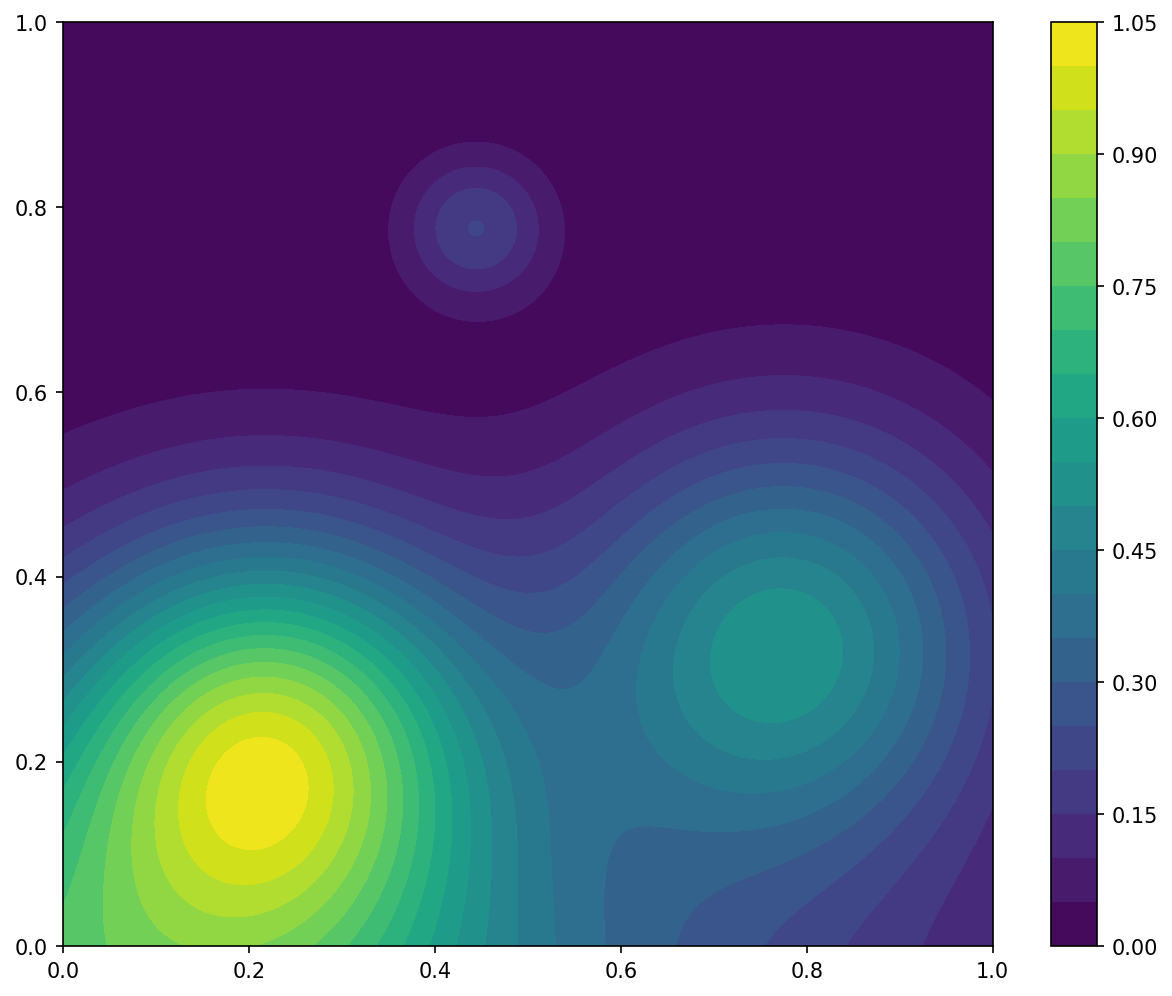}
\end{center}
	\caption{Graphical representation of $f_4$.}
 \label{fig:inter2d_f4}
\end{figure}

\begin{figure}[h]
\begin{center}
\includegraphics[width=0.45\textwidth]{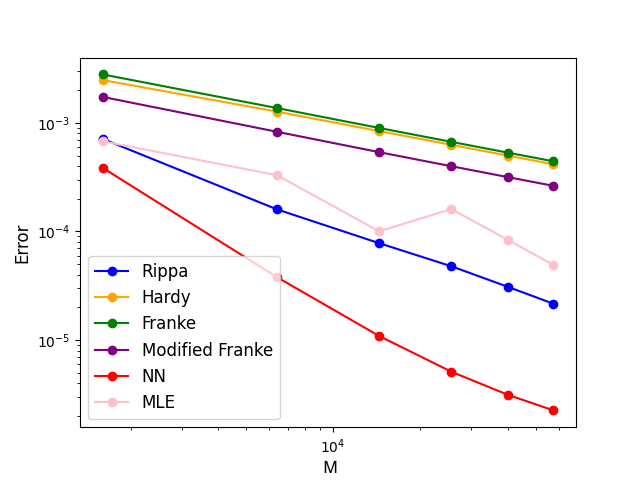}
\end{center}
	\caption{Plot of error for the 2-dimensional interpolation of $f_4$.}
 \label{fig:inter2d_test3}
\end{figure}

Then, we consider the following function, that has been featured in previous works as the initial condition to a nonlinear PDE \cite{Fasshauer2002, Gartland2000,Jankowska2018} (as shown in Figure \ref{fig:inter2d_f5}):
\begin{align}
\label{eq:2d_case1}
  f_5(x,y)=& \left(1+ \exp \left( -\frac{1}{\kappa}\right)-\exp \left(-\frac{x}{\kappa}\right)-\exp\left(\frac{x-1}{\kappa}\right)\right)\times \\
  &\left(1+ \exp\left(-\frac{1}{\kappa}\right)-\exp\left(-\frac{y}{\kappa}\right)-\exp\left(\frac{y-1}{\kappa}\right)\right).
\end{align}

where the computational domain is a square $[0,1]\times [0,1]$. Since the function profile becomes more steep at the boundaries, the complexity of the problem increases for lower $\kappa$. 
In Figure \ref{fig:inter2d}, we plot the error convergence plot considering $\kappa=0.1$ on the left and $\kappa=1$ on the right. The advantages of using the novel method are evident: there is a significant difference in the errors between the adaptive NN and the other strategies. Note that the errors of the Hardy and of the Franke methods are essentially identical. 

\begin{figure}[h]
\begin{center}
{\includegraphics[width=0.45\textwidth]{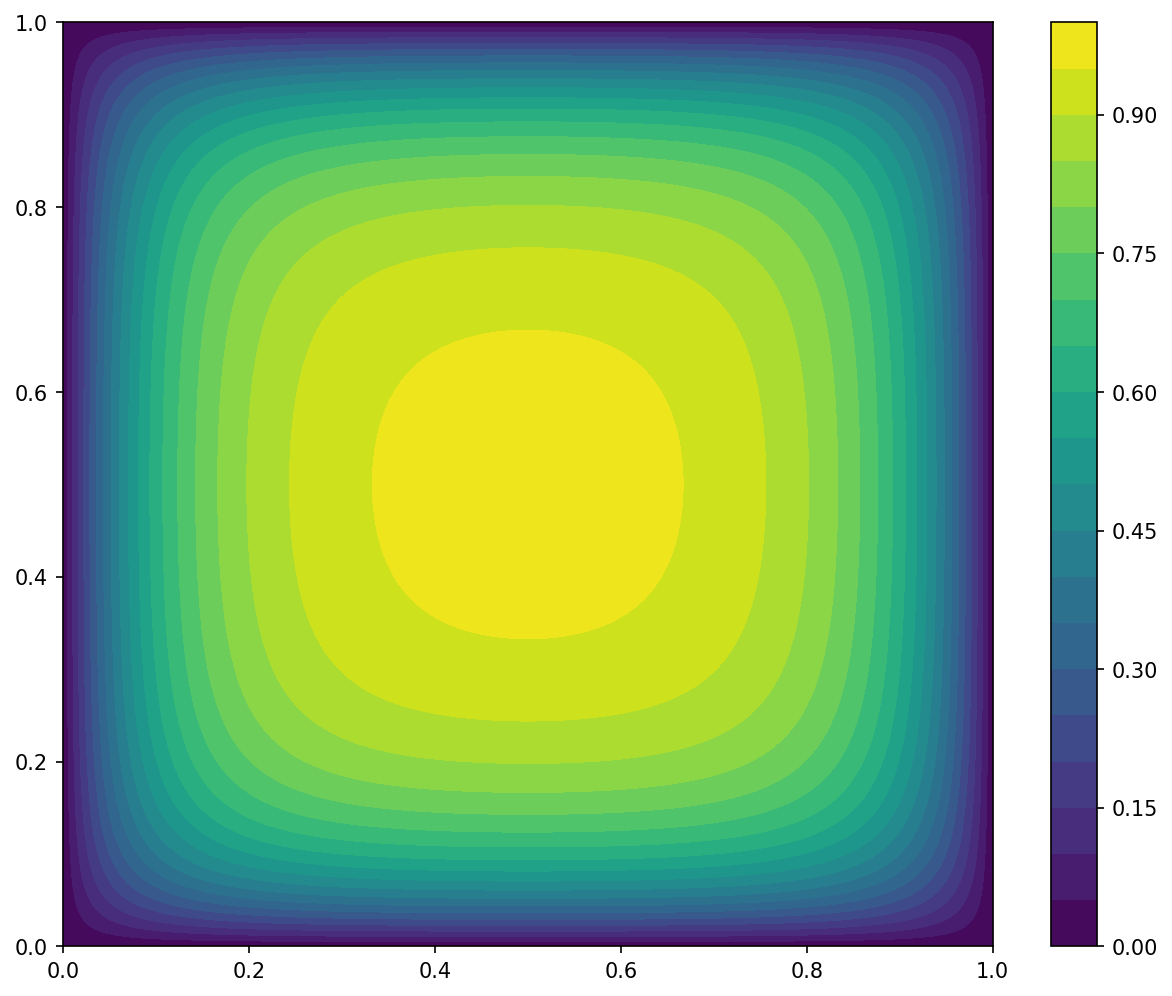}}
{\includegraphics[width=0.45\textwidth]{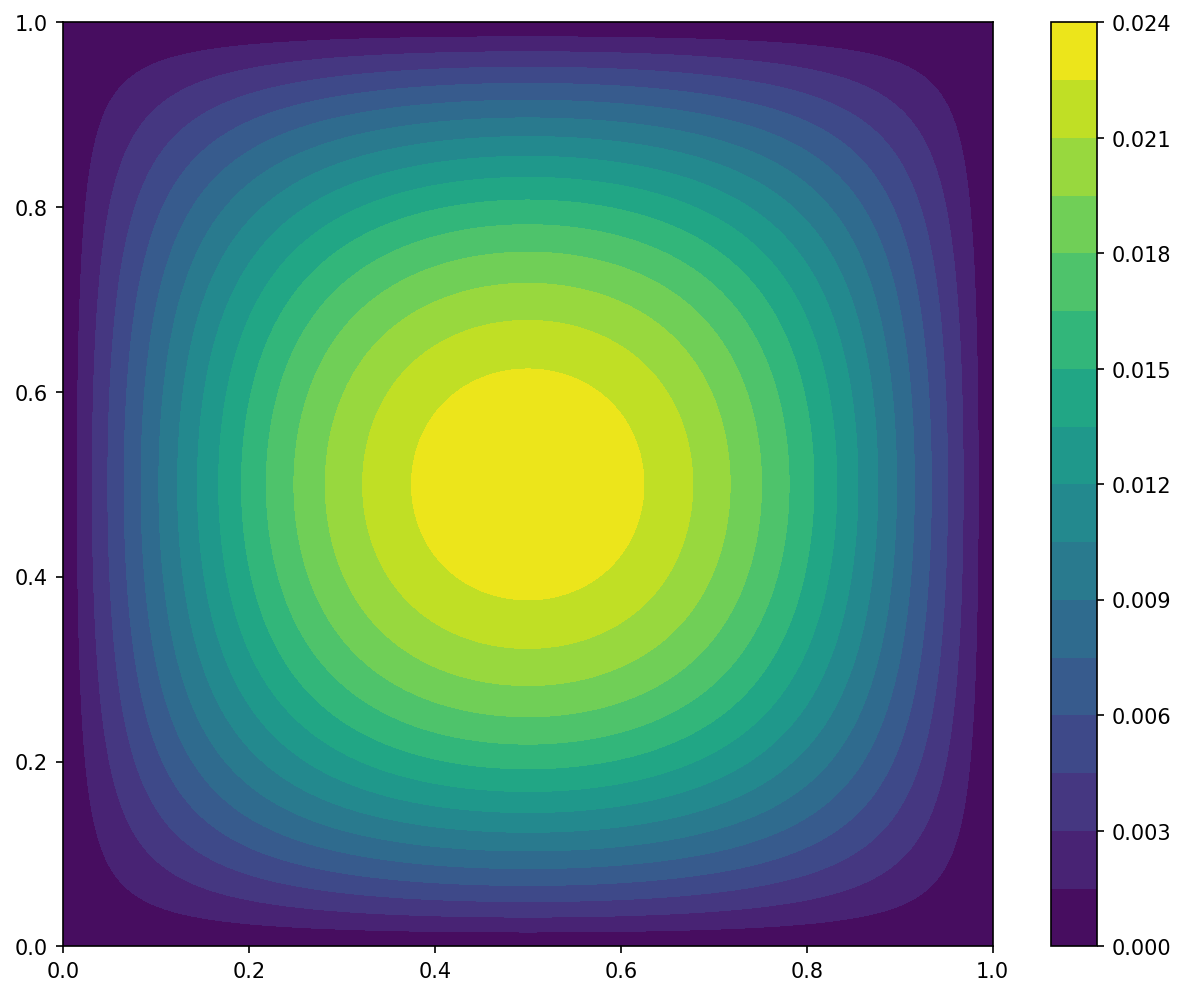}}
\end{center}
	\caption{Graphical representation of $f_5$. $\kappa=0.1$ on the left and $\kappa=1$ on the right.}
 \label{fig:inter2d_f5}
\end{figure}

\begin{figure}[h]
\begin{center}
{\includegraphics[width=0.45\textwidth]{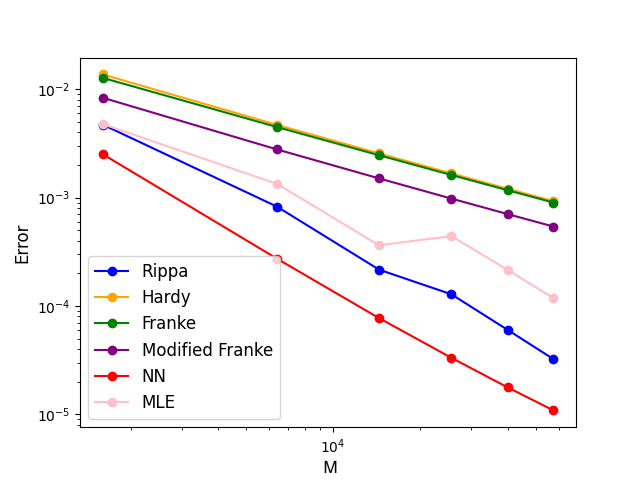}
 \label{fig:inter2d_test1}}
{\includegraphics[width=0.45\textwidth]{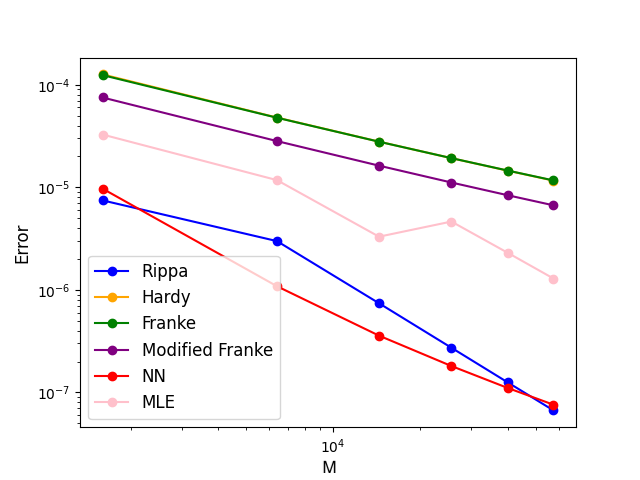}
 \label{fig:inter2d_test2}}
\end{center}
	\caption{Plot of error convergence for the 2-dimensional interpolation of $f_5$. $\kappa=0.1$ on the left and $\kappa=1$ on the right.}
 \label{fig:inter2d}
\end{figure}

\subsubsection{Image reconstruction}
The image reconstruction task can be seen as a 2-dimensional interpolation problem. Let us define an image as a function: 
\[ I: \mathbb{R}^2 \to \mathbb{R}\]
where 
$I(i,j)$ represents the intensity at pixel coordinates 
$(i,j)$.  The objective is to reconstruct the image after performing processing. We use RBF-FD method \cite{Mojarrad2023}. The interpolation point set is the pixel coordinates of the processed image and the evaluation point set is the pixel coordinates of the original image.

We  present the numerical results illustrating the behavior of our scheme on two image reconstruction tasks: \textit{image zoom-in} and \textit{image distortion correction}.

The  steps of the image zoom-in can be summarized as follows: first, downsample the  original image, and then upsample the downsampled image back to its original size by reconstruction using RBFs. The intensity values are upsampled based on the downsampled image's pixel grid.

Here, we measure the error using the mean squared error (MSE) and peak signal to noise ratio (PSNR) \cite{Gonzalez2002}. Let $I$ and $\hat{I}$ be a pixel intensity of the original and approximating image, respectively, with $
m_1$ and $m_2$ number of rows and columns of pixels:
\begin{align*}
\text{MSE}(I,\hat{I})&=\frac{1}{m_1m_2}\sum_{i=1}^{m_1} \sum_{j=1}^{m_2} |I(i,j)-\hat{I}(i,j) |^2,\\
\text{PSNR}&=10\log_{10}\left(\frac{(\max\limits_{i,j} I(i,j))^2}{\text{MSE}(I,\hat{I})}\right).
\end{align*}
The first considered test case is the synthetic checkerboard image (see Figure \ref{fig:image_u1}). The obtained approximating images, along with MSE and PSNR values
are displayed in Figure \ref{fig:image1_upscaled} and Table \ref{tab:mse_upscaled1}, respectively.
One can observe that the image obtained with our approach produce a better quality, when considering the NN strategy, we observe a lower MSE and higher  PSNR  values.

We consider another image, see Figure \ref{fig:image_u2} \cite{Mortensen2000}. We show in Figure \ref{fig:image2_upscaled} and Table \ref{tab:mse_upscaled2} the approximating image and MSE and PSNR values for different strategies, respectively. One can see that the results are similar, demonstrating that the adaptive NN strategy has a superior performance.

\begin{figure}[h]
\begin{center}
{\includegraphics[width=0.35\textwidth]{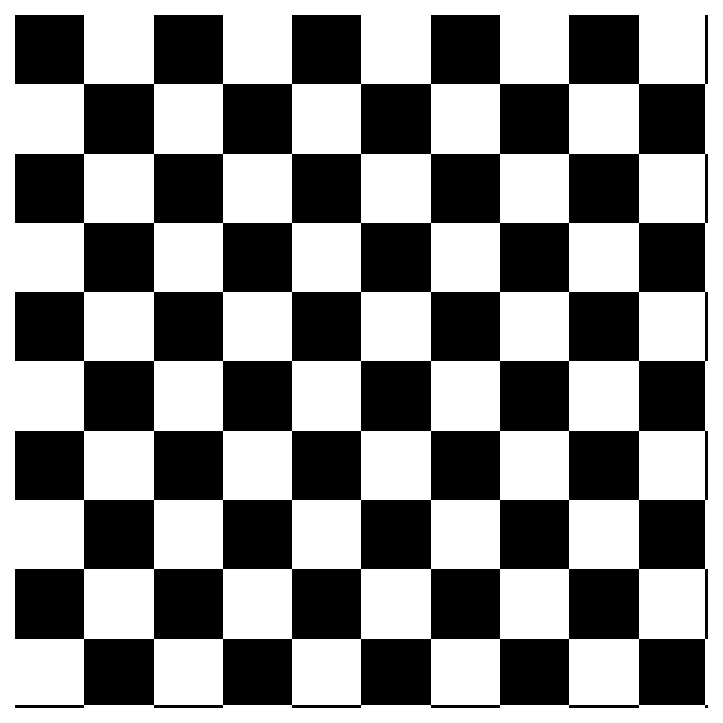}}
{\includegraphics[width=0.175\textwidth]{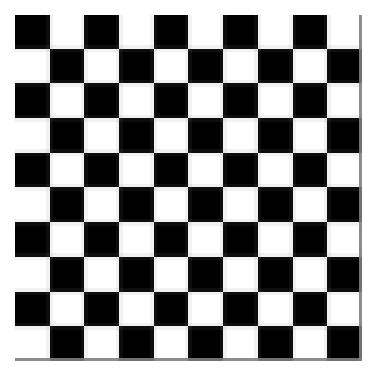}}
\end{center}
	\caption{Left: Original image. Right: Downscaled image.}
 \label{fig:image_u1}
\end{figure}

\begin{figure}[h]
\begin{center}
\subfigure[Rippa]{\includegraphics[width=0.35\textwidth]{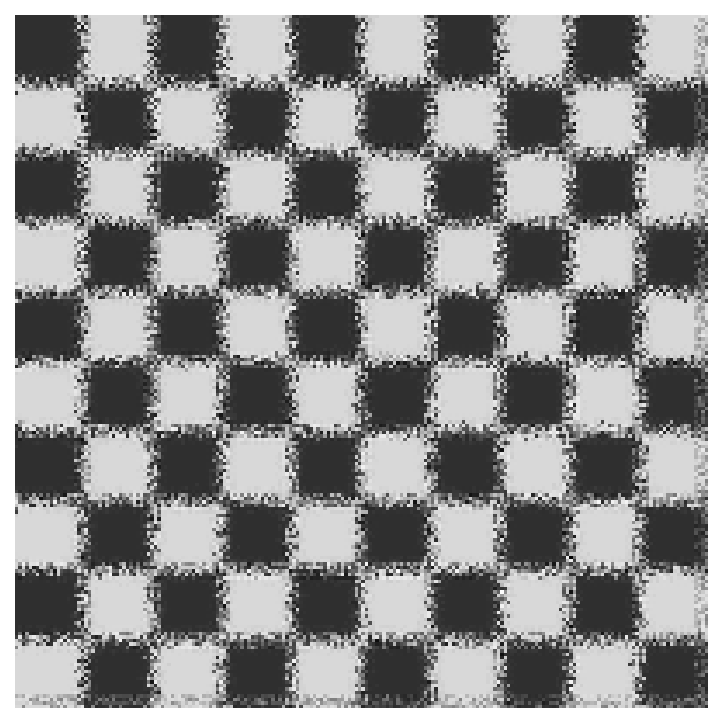}}
\subfigure[MLE]{\includegraphics[width=0.35\textwidth]{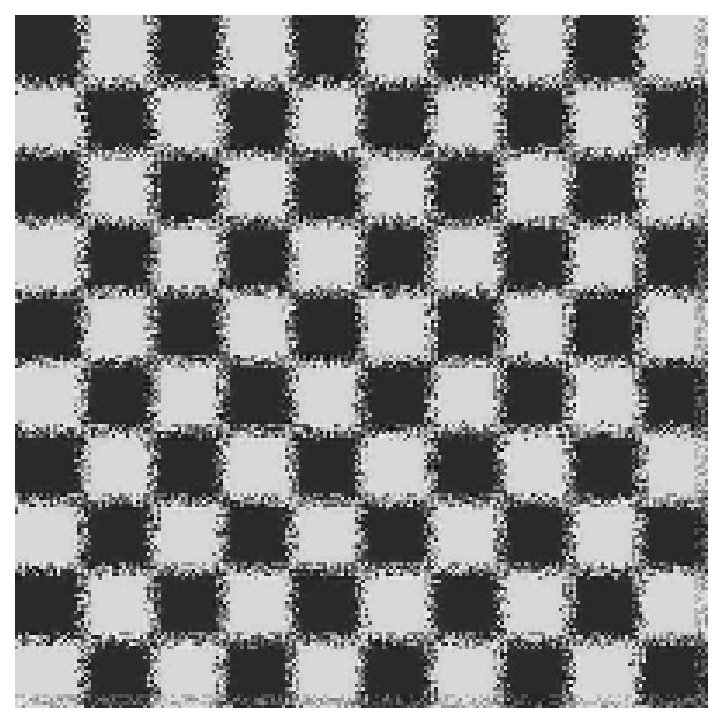}}
\subfigure[Hardy]{\includegraphics[width=0.35\textwidth]{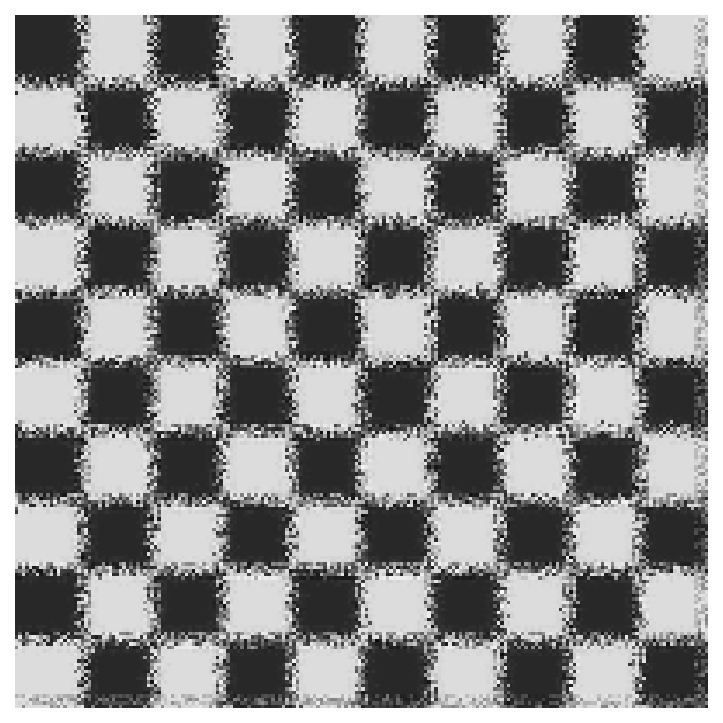}}
\subfigure[Franke]{\includegraphics[width=0.35\textwidth]{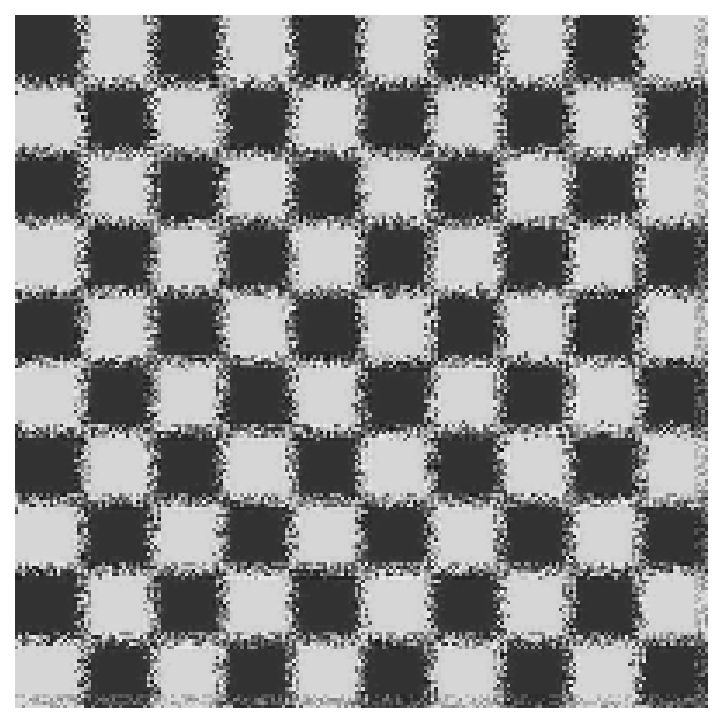}}
\subfigure[Modified Franke]{\includegraphics[width=0.35\textwidth]{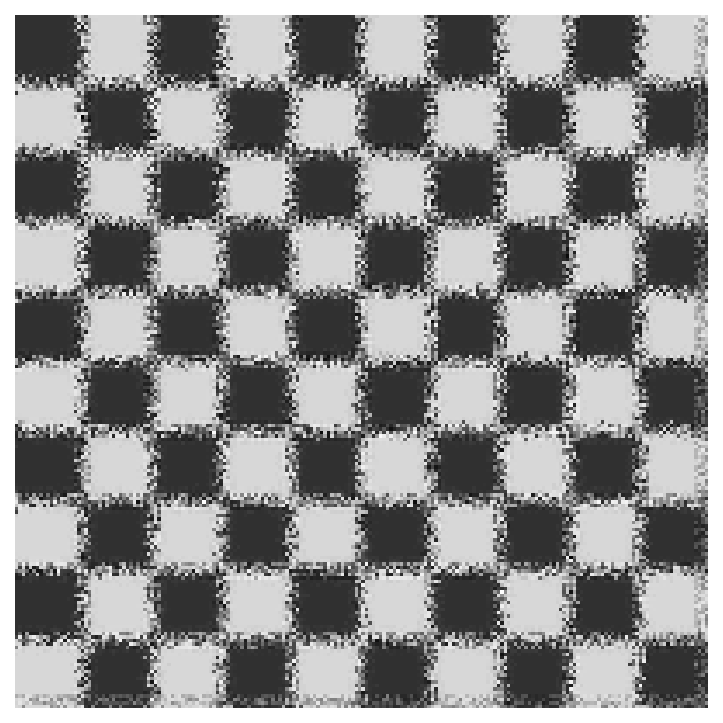}}
\subfigure[NN]{\includegraphics[width=0.35\textwidth]{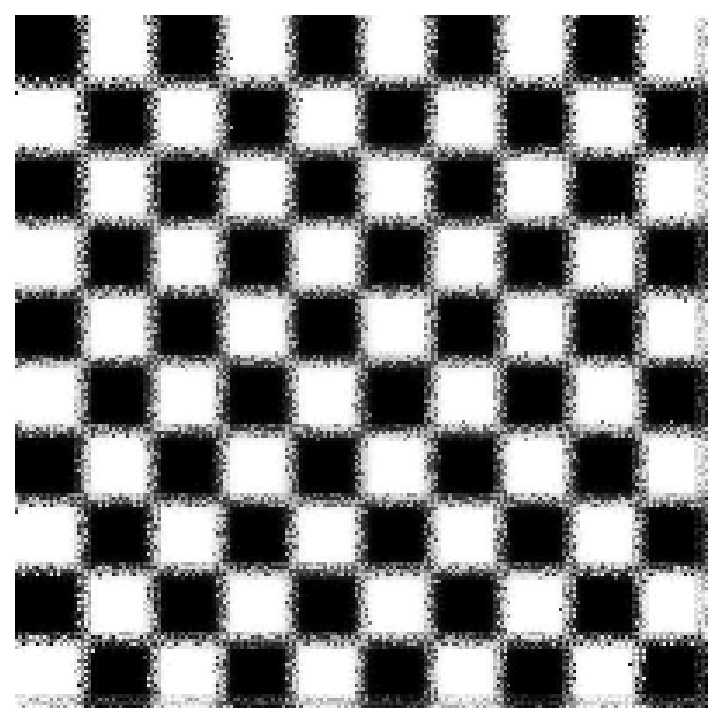}}
\end{center}
\caption{Upscaled images using different strategies.}
\label{fig:image1_upscaled}
\end{figure}

\begin{table}[h]
\centering
\caption{MSE and PSNR with diﬀerent strategies for image zoom-in for Figure \ref{fig:image_u1}.}
\begin{tabular}{|c| c c|} 
\hline
Scheme & MSE & PSNR\\
\hline
Rippa & 8.4151e-02 &10.7494\\
MLE & 8.7580e-02 & 10.5759\\
Hardy & 8.5182e-02 & 10.6965\\
Franke & 8.9298e-02  & 10.4916\\
Modified Franke &8.9302e-02 & 10.4914\\
NN & 6.0895e-02& 12.1542\\
\hline
\end{tabular}
\label{tab:mse_upscaled1}
\end{table}

\begin{figure}[h]
\begin{center}
{\includegraphics[width=0.35\textwidth]{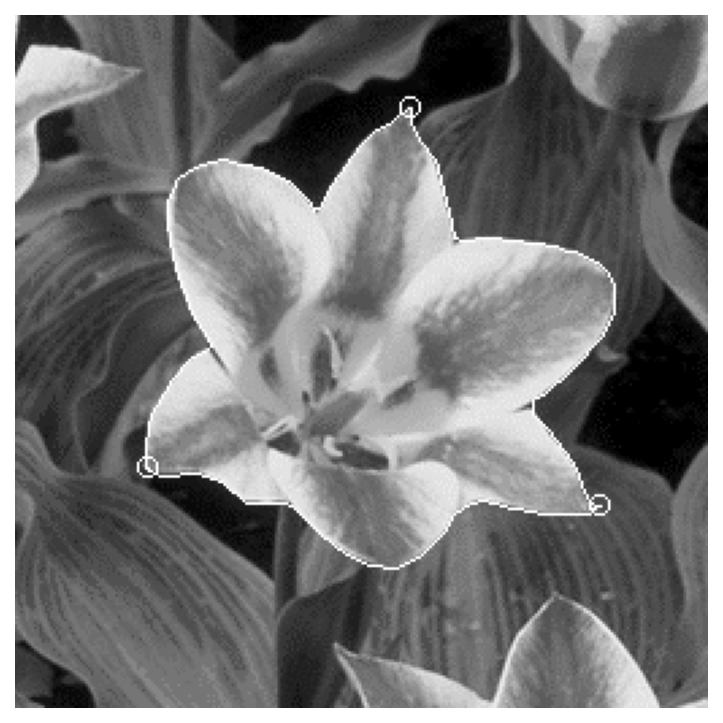}}
{\includegraphics[width=0.175\textwidth]{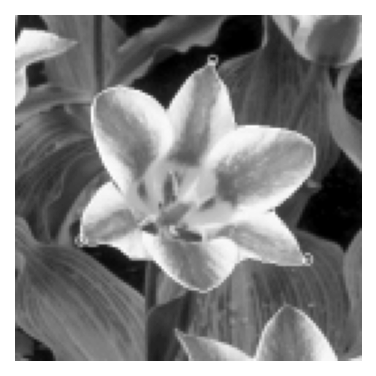}}
\end{center}
	\caption{Left: Original image. Right: Downscaled image.}
 \label{fig:image_u2}
\end{figure}

\begin{figure}[h]
\begin{center}
\subfigure[Rippa]{\includegraphics[width=0.35\textwidth]{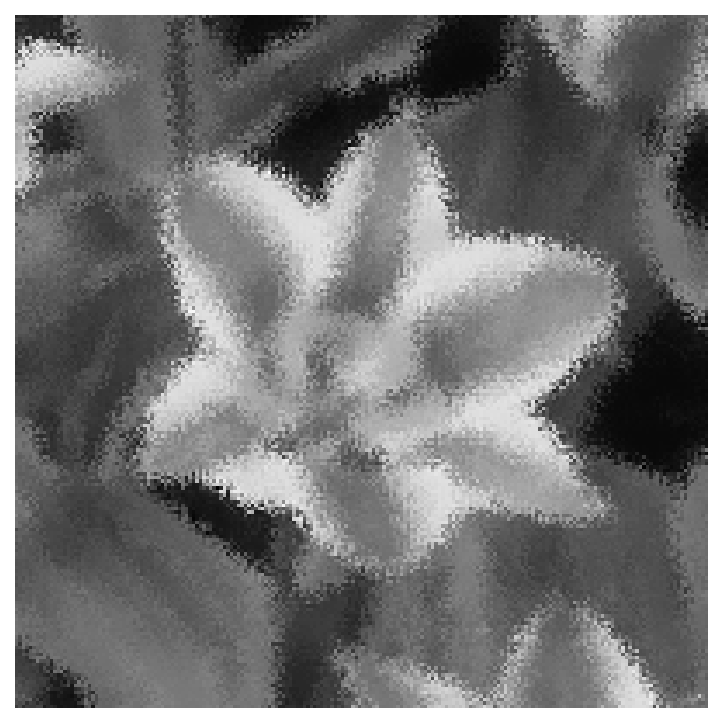}}
\subfigure[MLE]{\includegraphics[width=0.35\textwidth]{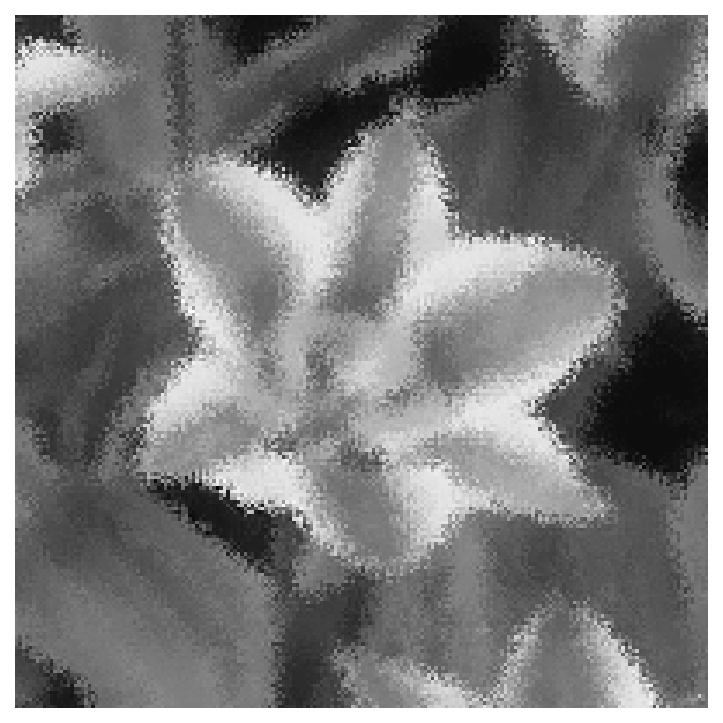}}
\subfigure[Hardy]{\includegraphics[width=0.35\textwidth]{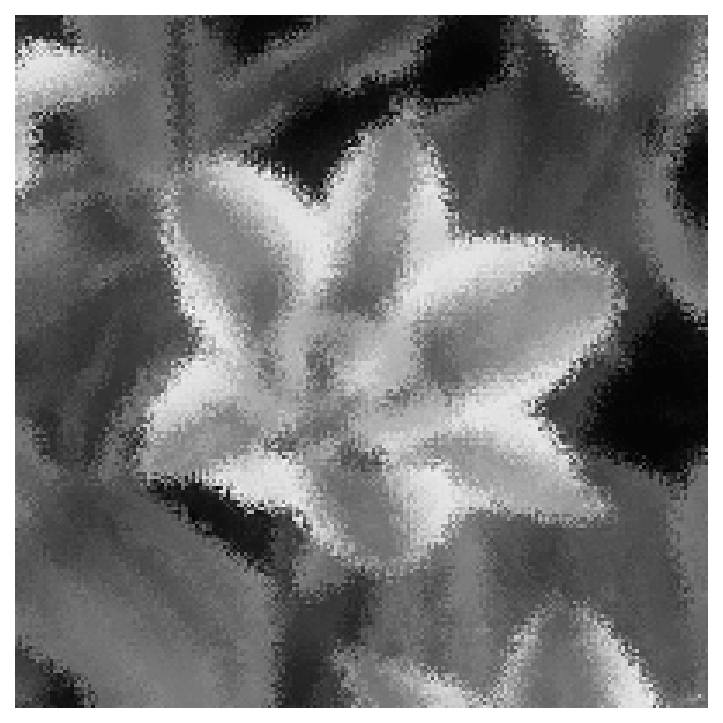}}
\subfigure[Franke]{\includegraphics[width=0.35\textwidth]{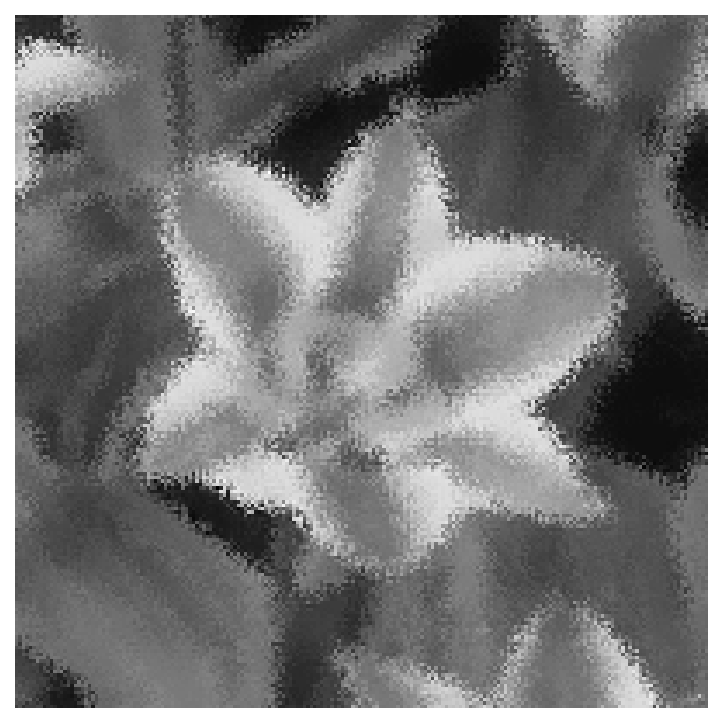}}
\subfigure[Modified Franke]{\includegraphics[width=0.35\textwidth]{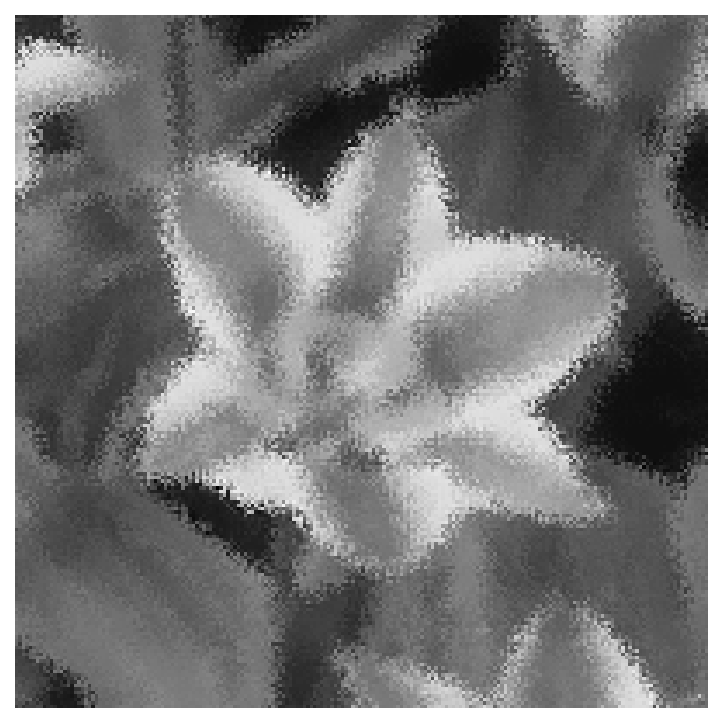}}
\subfigure[NN]{\includegraphics[width=0.35\textwidth]{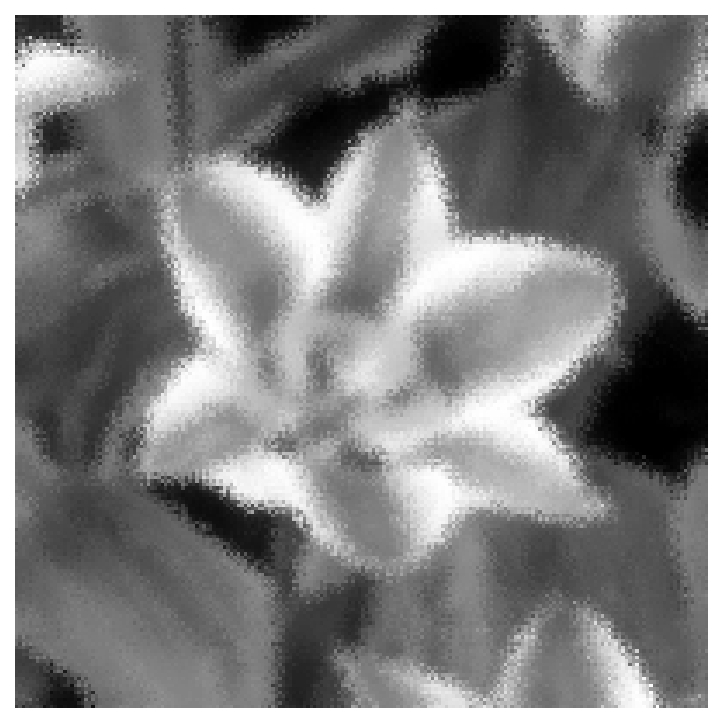}}
\end{center}
\caption{Upscaled images using different strategies.}
\label{fig:image2_upscaled}
\end{figure}

\begin{table}[h]
\centering
\caption{MSE and PSNR with diﬀerent strategies for image zoom-in for Figure \ref{fig:image_u2}.}
\begin{tabular}{|c| c c|} 
\hline
Scheme & MSE & PSNR\\
\hline
Rippa & 7.6708e-03 & 20.9098\\
MLE & 7.6722e-03 & 20.9090\\
Hardy & 7.47227-03 & 21.0234\\
Franke &  7.8612e-03& 20.8034\\
Modified Franke & 7.8544e-03& 20.8071 \\
NN &5.0149e-03 & 22.6253\\
\hline
\end{tabular}
\label{tab:mse_upscaled2}
\end{table}
Image distortion refers to the alteration of the geometry of an image. 
The transformation function $\mathcal{M}:\mathbb{R}^2 \to \mathbb{R}^2$ maps   coordinates $(x,y)$ in the original  image to coordinates $(x^{*},y^{*})$ in the
 distorted image, i.e.
 $(x^{*},y^{*})=\mathcal{M}(x,y)$.
 Here, we consider barrel distortion, which is a common type of distortion. 
 A typical choice is given by
 \begin{equation*}
  \begin{split}
   x^{*}&=x(1+kr^2),\\
   y^{*}&=y(1+kr^2).
  \end{split}   
 \end{equation*}
 where $r=\sqrt{x^2+y^2}$ and $k$ is the distortion coefficient.
 The goal is to correct the distorted image to match its original geometry by mapping the distorted coordinates back to their original undistorted positions.
We need  to correct how pixel positions have been distorted in both  $x$ and $y$ directions, therefore, we need two separate RBFs: one for the x-coordinates and one for the y-coordinates.
Finally, we apply the interpolators   to the original grid to get the corrected coordinates.

 We examine the same test images for this task. The distorted image for the first and second test case is represented in Figures \ref{fig:image_d1} and \ref{fig:image_d2}, respectively. 
 In Figures \ref{fig:image1_corrected} and \ref{fig:image2_corrected}, we depict the corrected images using all the presented methods
 for the first and second test case, respectively.
 The MSE and PSNR for these two images are reported in Tables \ref{tab:mse_corrected1} and \ref{tab:mse_corrected2}. As before, we proved the good performance of the adaptive NN strategy.

\begin{figure}[h]
\begin{center}
{\includegraphics[width=0.35\textwidth]{original1.png}}
{\includegraphics[width=0.35\textwidth]{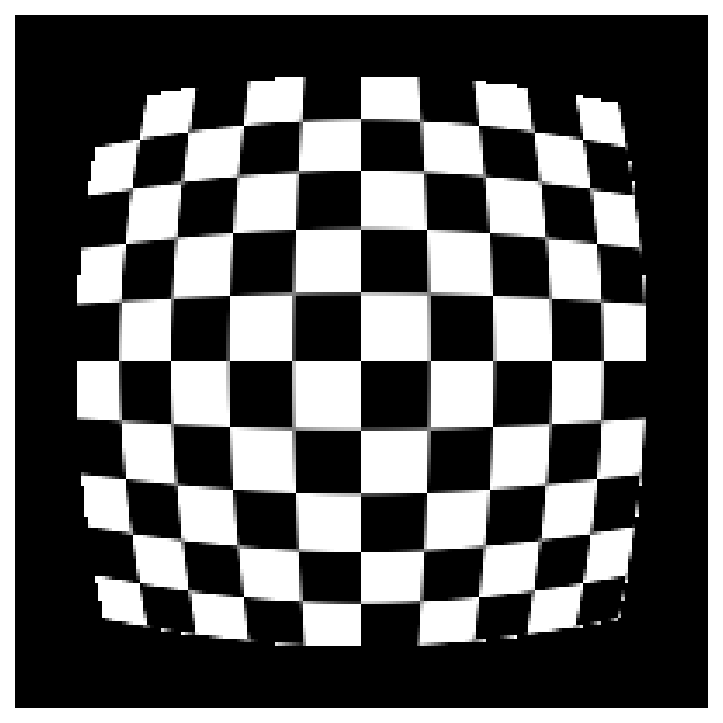}}
\end{center}
	\caption{Left: Original image. Right: Distorted image with $k=0.3$.}
 \label{fig:image_d1}
\end{figure}

\begin{figure}[h]
\begin{center}
\subfigure[Rippa]{\includegraphics[width=0.35\textwidth]{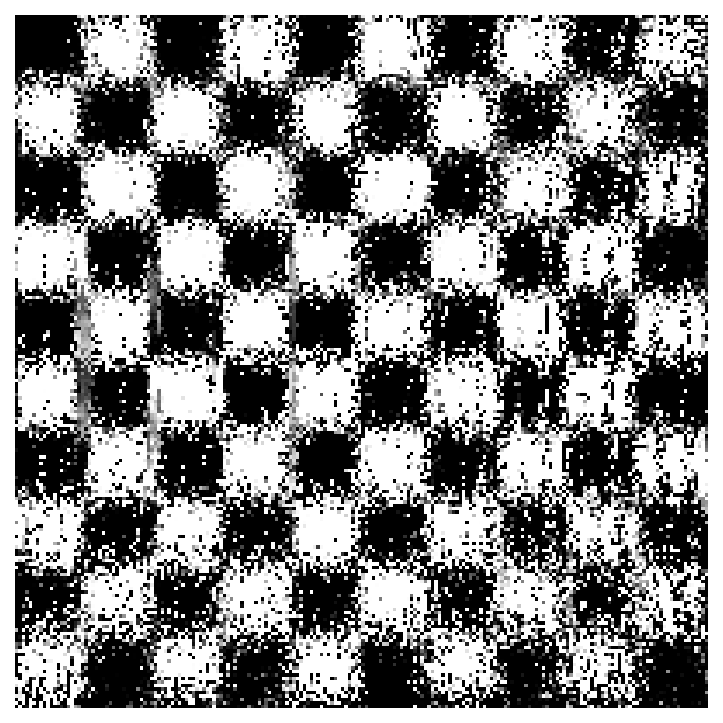}}
\subfigure[MLE]{\includegraphics[width=0.35\textwidth]{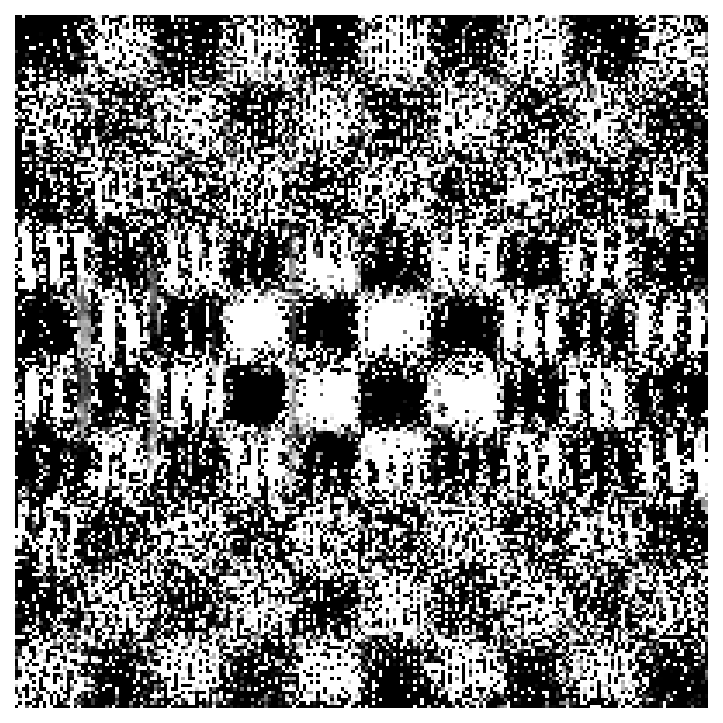}}
\subfigure[Hardy]{\includegraphics[width=0.35\textwidth]{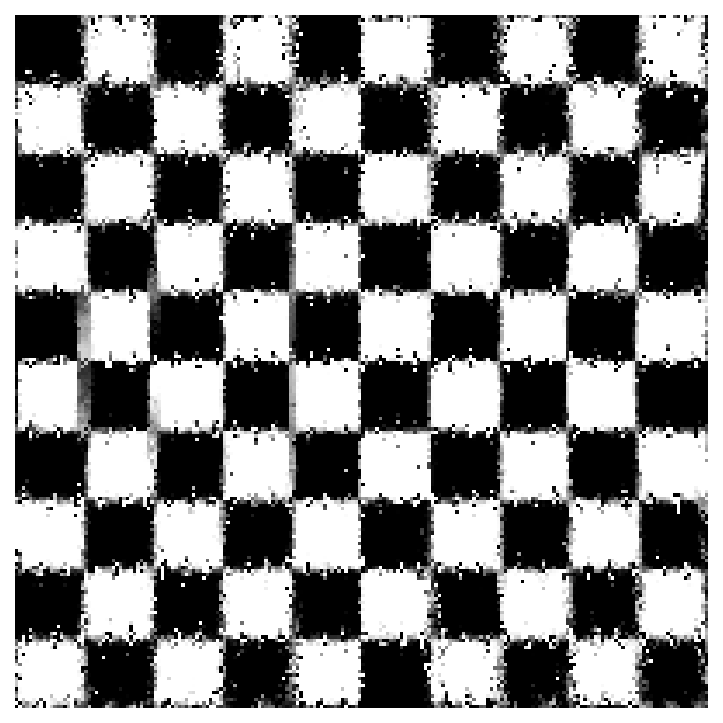}}
\subfigure[Franke]{\includegraphics[width=0.35\textwidth]{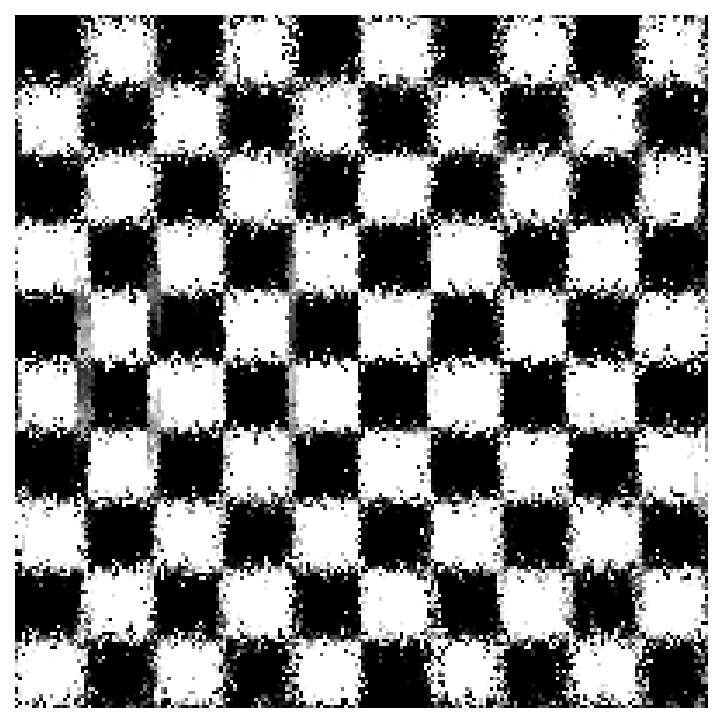}}
\subfigure[Modified Franke]{\includegraphics[width=0.35\textwidth]{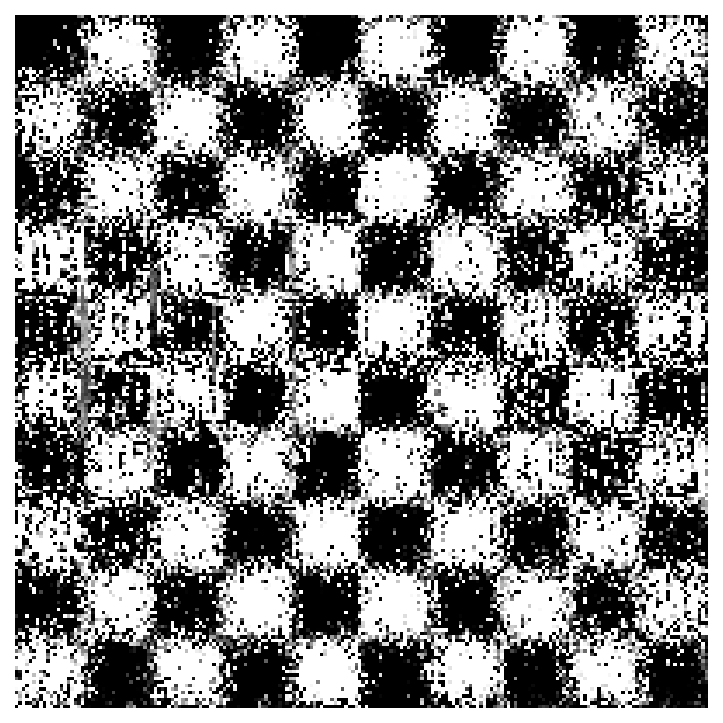}}
\subfigure[NN]{\includegraphics[width=0.35\textwidth]{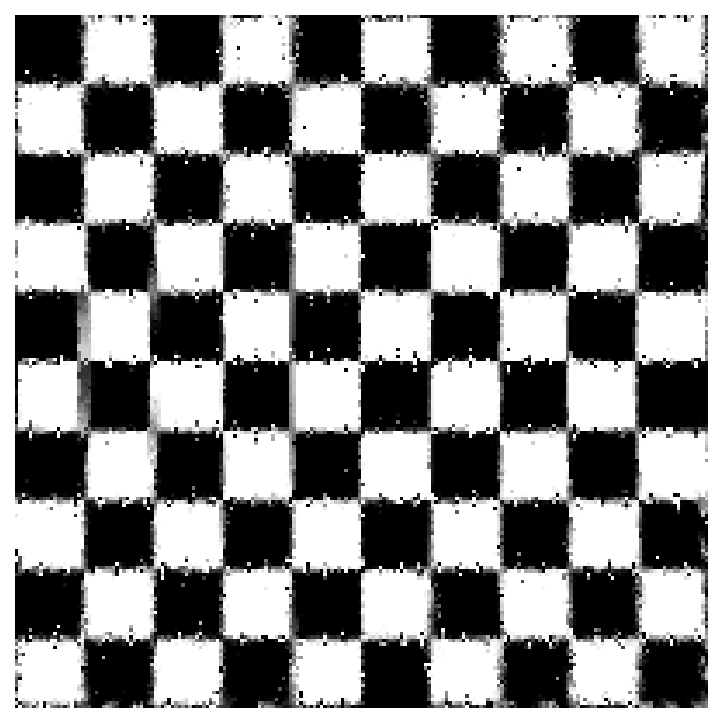}}
\end{center}
\caption{Corrected images using different strategies.}
\label{fig:image1_corrected}
\end{figure}

\begin{table}[h]
\centering
\caption{MSE and PSNR with diﬀerent strategies for image distortion correction for Figure \ref{fig:image_d1}.}
\begin{tabular}{|c| c c|} 
\hline
Scheme & MSE & PSNR\\
\hline
Rippa & 1.8277e-01&7.3809 \\
MLE & 3.0032e-01& 5.2241 \\
Hardy & 5.5457e-02 &12.5605 \\
Franke & 8.8646e-02  &10.5234 \\
Modified Franke &1.9115e-01 & 7.1863\\
NN & 3.8870e-02& 14.1038\\
\hline
\end{tabular}
\label{tab:mse_corrected1}
\end{table}

\begin{figure}[h]
\begin{center}
{\includegraphics[width=0.35\textwidth]{original2.png}}
{\includegraphics[width=0.35\textwidth]{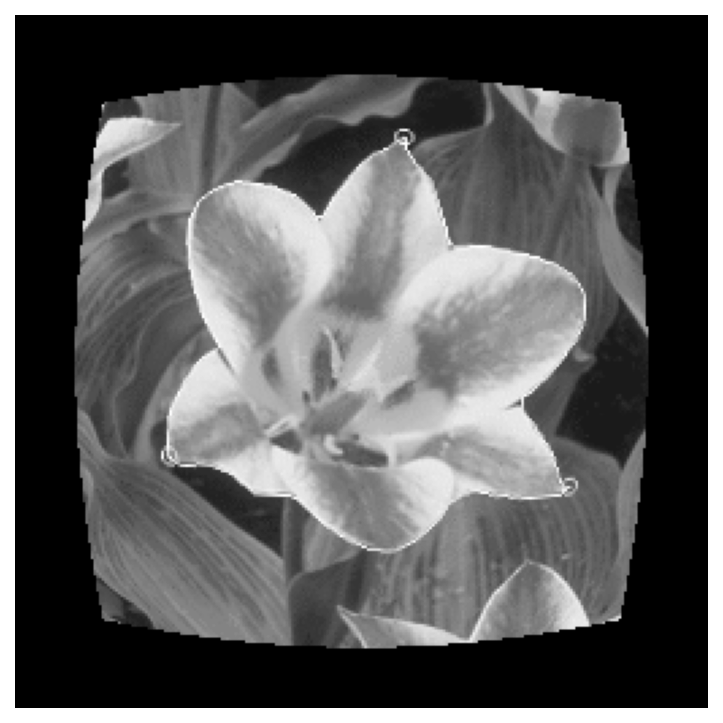}}
\end{center}
	\caption{Left: Original image. Right: Distorted image with $k=0.3$.}
 \label{fig:image_d2}
\end{figure}

\begin{figure}[h]
\begin{center}
\subfigure[Rippa]{\includegraphics[width=0.35\textwidth]{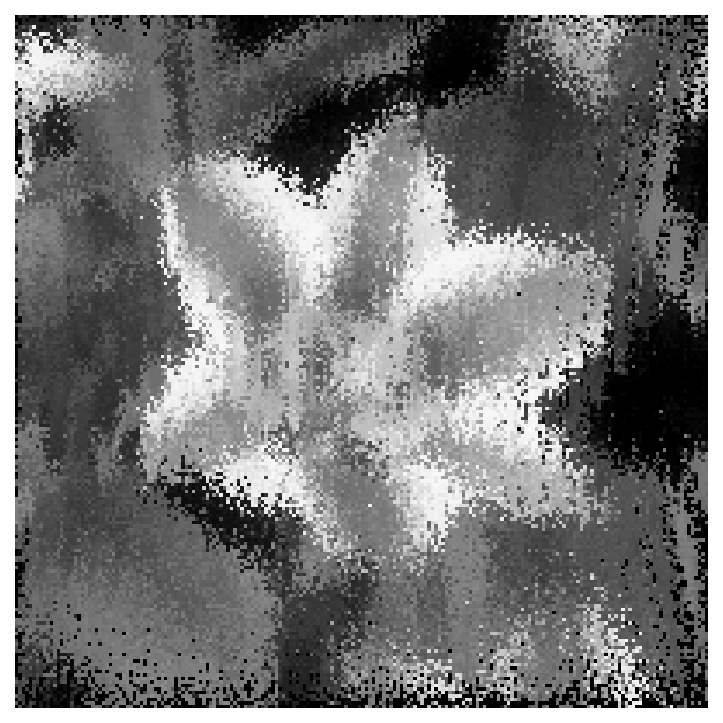}}
\subfigure[MLE]{\includegraphics[width=0.35\textwidth]{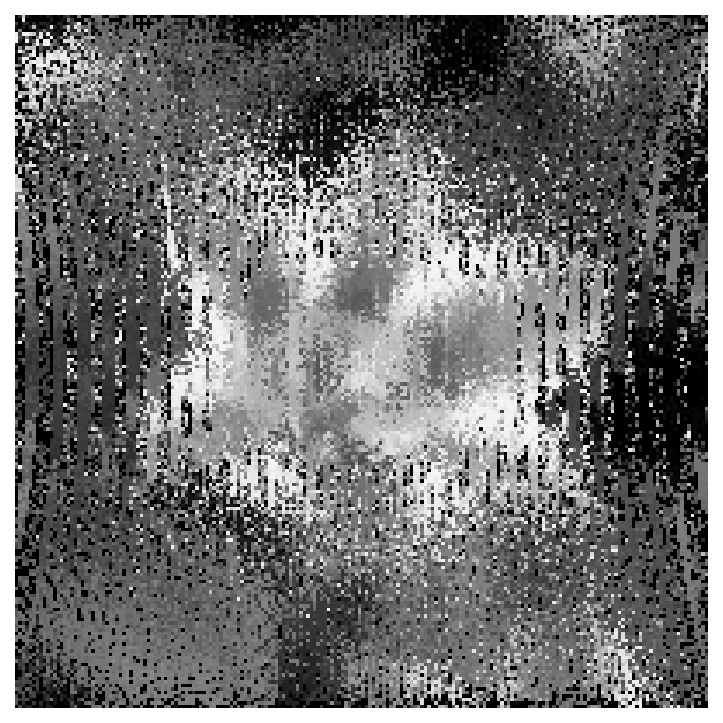}}
\subfigure[Hardy]{\includegraphics[width=0.35\textwidth]{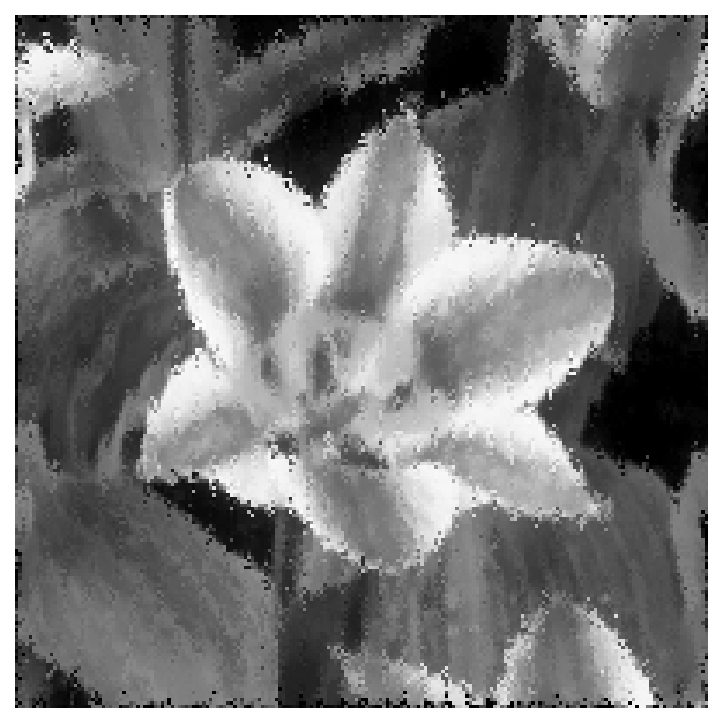}}
\subfigure[Franke]{\includegraphics[width=0.35\textwidth]{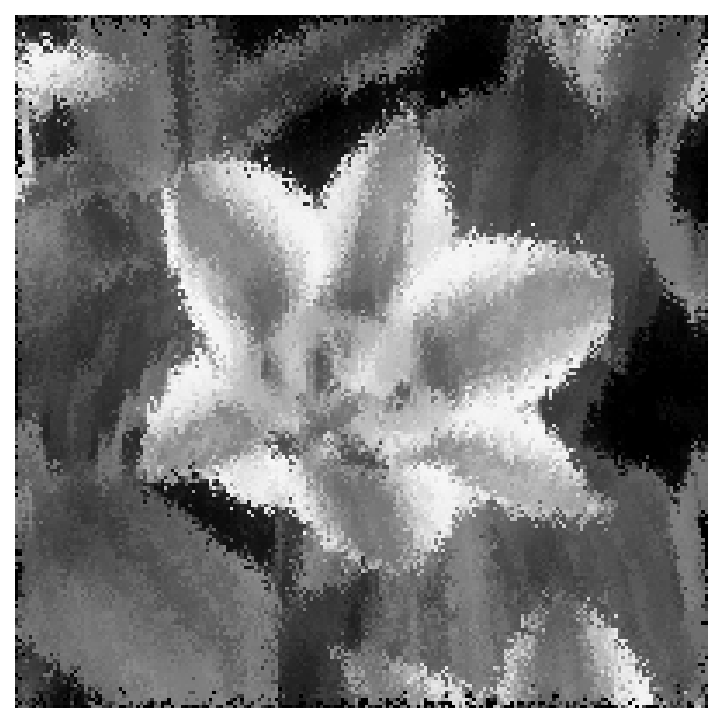}}
\subfigure[Modified Franke]{\includegraphics[width=0.35\textwidth]{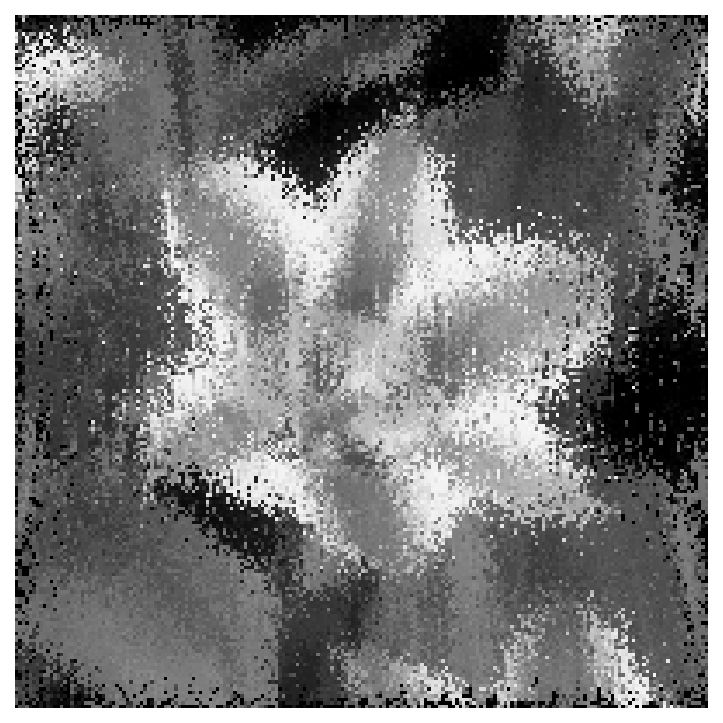}}
\subfigure[NN]{\includegraphics[width=0.35\textwidth]{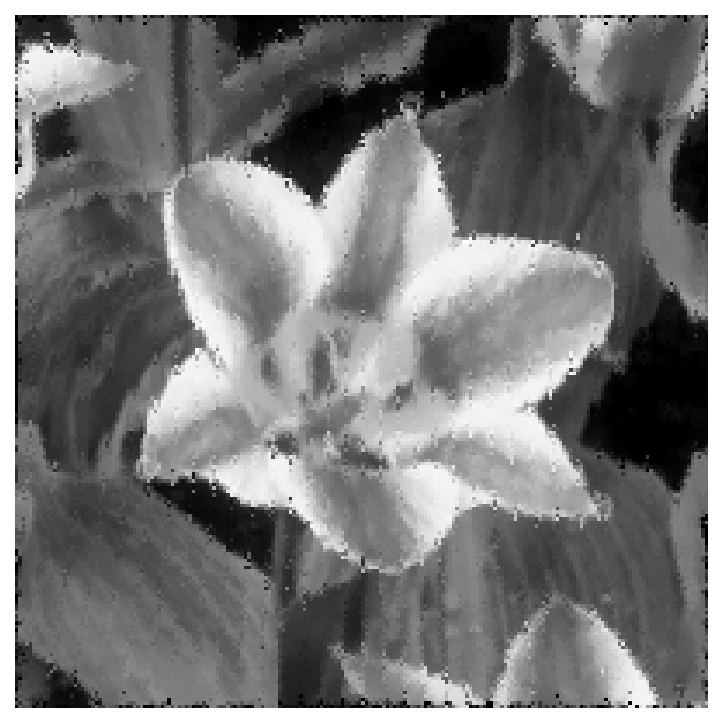}}
\end{center}
\caption{Corrected images using different strategies.}
\label{fig:image2_corrected}
\end{figure}

\begin{table}[h]
\centering
\caption{MSE and PSNR with different strategies for image distortion correction for Figure \ref{fig:image_d2}.}
\begin{tabular}{|c| c c|} 
\hline
Scheme & MSE & PSNR\\
\hline
Rippa & 2.2923e-02 & 16.1556\\
MLE & 4.8292e-02 & 12.9195\\
Hardy & 6.4938e-03 & 21.6332\\
Franke & 1.0203e-02 & 19.6710\\
Modified Franke &2.2342e-02 & 16.2670 \\
NN &4.2533e-03 & 23.4710\\
\hline
\end{tabular}
\label{tab:mse_corrected2}
\end{table}
\subsection{Solution of PDEs}
\label{sec:pdes}

We solve  time dependent and steady-state differential equations by the RBF-FD method, which is a meshless numerical technique used to solve PDEs generated from RBF interpolation over local sets of nodes on the surface \cite{Tolstykh2003,igor-rbf}.  In this section we show 1-dimensional and 2-dimensional problems. We set the oversampling parameter to $1$, thus considering a collocation method.

\subsubsection{1-dimensional heat equation}
\label{sec:time_1d}
In this section, we present the results of our method for numerically approximating a 1D heat equation with two initial conditions.
The results of RBF-FD method with the shape parameters derived from the adaptive NN strategy are compared   with the RBF-FD method with shape parameters obtained with Hardy, Franke and modified Franke strategies. We consider the collocation method. 
For all test problems presented in this article, 
the BDF2 \cite{Gear1967} for time stepping is implemented.

Let us consider the initial-boundary value problem
\begin{equation}
\label{p1}
\begin{split}
\frac{\partial u}{\partial t}&=\frac{\partial ^2 u}{\partial x^2}, ~x  \in [0,1],  ~0 \leq t\leq T,\\
u(x,0)&=-x^2+x,~x  \in (0,1) ,
\\
u(0,t)&=0,~ u(1,t)=0,~0\leq t\leq T.
\end{split}
\end{equation}
The exact solution of \eqref{p1} takes the following form
$$u(x,t)=\sum_{n=1}^{\infty} \left(\frac{-4}{n^3 \pi^3}\right)\left((-1)^n-1\right)\sin\left(n\pi x\right)\exp(-n^2 \pi ^2 t).$$
We set the time step to $\Delta t=0.001$ and the final time to $T=1$. The error between the analytical solution and the numerical solutions are shown in Figure \ref{fig:heat1d_test1_u} and Figure \ref{fig:heat1d_test1_nu} for equispaced and non-equispaced interpolation nodes, respectively. Our method has a clearly superior performance. On the contrary, the errors do not decrease in the other strategies.  Note that as $M$ increases, the decay of the error is slower and it seems to stagnate. This might be due to the fact that we restrict the interpolation matrix condition number to be bounded.

\begin{figure}[h]
\begin{center}
\subfigure[Equispaced]
{\includegraphics[width=0.45\textwidth]{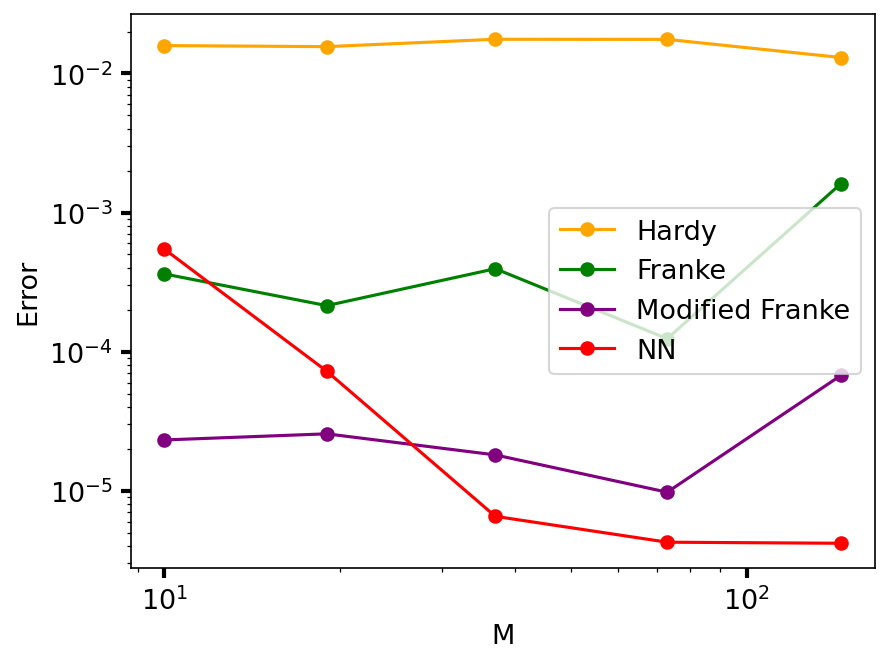}
 \label{fig:heat1d_test1_u}}
\subfigure[Non-equispaced]
{\includegraphics[width=0.45\textwidth]{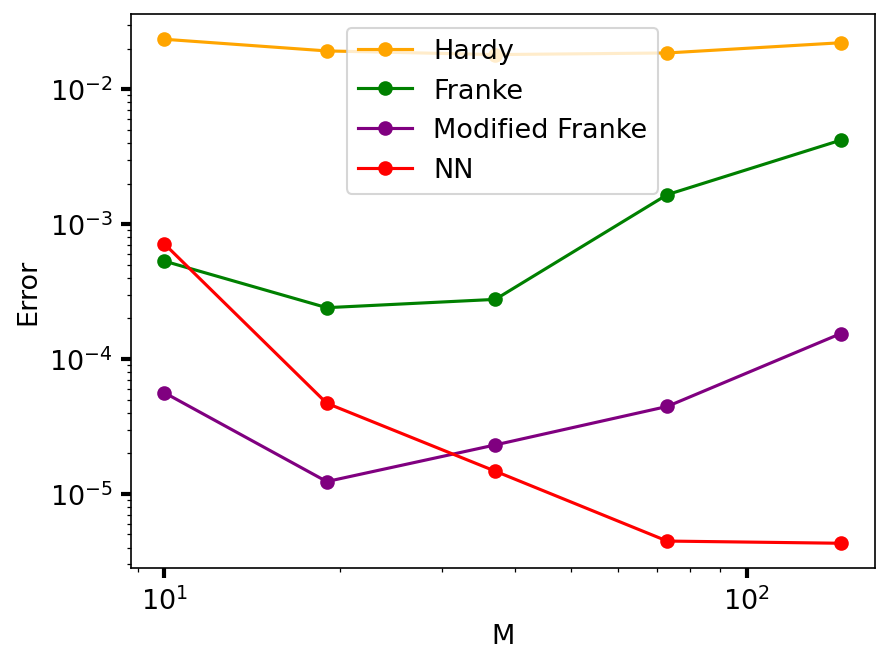}
 \label{fig:heat1d_test1_nu}}
\end{center}
	\caption{Plot of error convergence for \eqref{p1}.}
\end{figure}

Again we consider \eqref{p1}   with the different initial condition
\begin{equation}
\label{p4}
\begin{split}
\frac{\partial u}{\partial t}&=\frac{\partial ^2 u}{\partial x^2}, ~x  \in [0,1],  ~0 \leq t\leq T,\\
u(x,0)&=6\sin(\pi x),~x  \in (0,1),
\\
u(0,t)&=0,~ u(1,t)=0,~0\leq t\leq T.
\end{split}
\end{equation}
The exact solution can be written as follows
$$u(x,t)=6\sin(\pi x)\\exp(-\pi^2 t).$$
We again use  $\Delta t=0.001$ and  $T=1$. In Figure \ref{fig:heat1d_test2_u} and Figure \ref{fig:heat1d_test2_nu} the  errors are shown  for equispaced and non-equispaced points, respectively. It can be easily seen that only our approach converges to the exact solution. These figures show that the other methods are unable to appropriately approximate the solution of \eqref{p4}.

\begin{figure}[h]
\begin{center}
\subfigure[Equispaced]
{\includegraphics[width=0.45\textwidth]{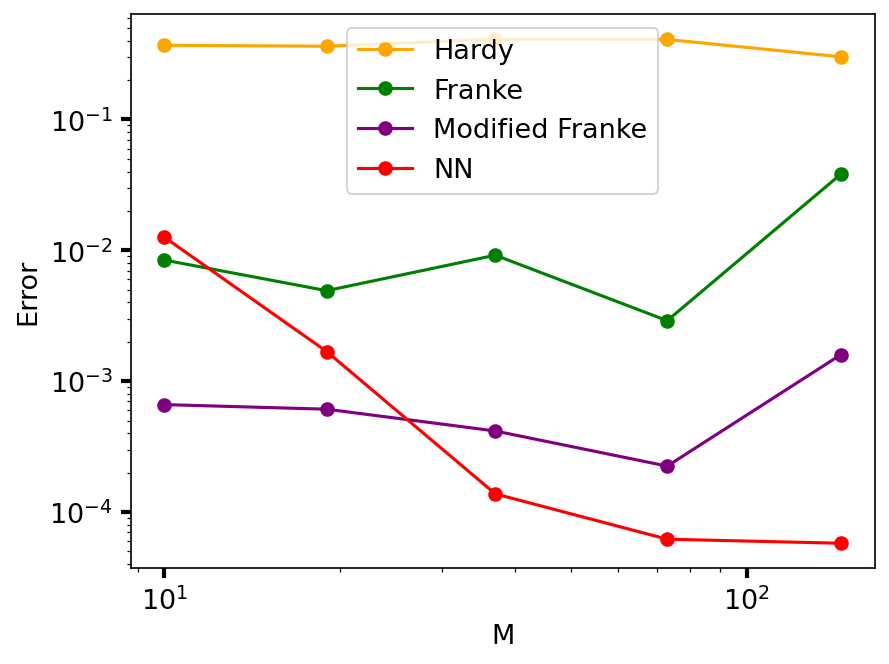}
\label{fig:heat1d_test2_u}}
\subfigure[Non-equispaced]
{\includegraphics[width=0.45\textwidth]{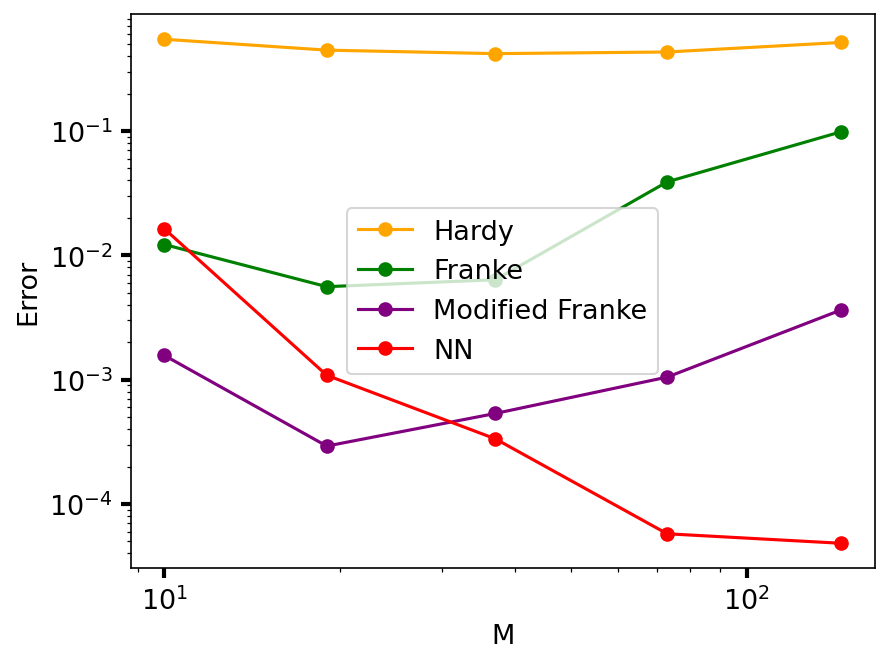}
\label{fig:heat1d_test2_nu}}
\end{center}
	\caption{Plot of error convergence for \eqref{p4}.}
 
\end{figure}

\subsubsection{2-dimensional boundary value problem}
\label{sec:bvp_2d}
Let us consider the 2-dimensional
linear elliptic boundary value problem
\begin{equation}
\label{poisson_eq_2d}
u_{xx}+u_{yy}= -4\pi^2\sin(2\pi x y)(x^2+y^2),~ (x,y) \in \Omega,
\end{equation}
with boundary data
\begin{equation}
\label{eq:poisson_eq_2d_bc}
u(x,y)= \sin(2\pi x y),~(x,y)  \in \partial \Omega.
\end{equation}
where the domain is a square $\Omega=[0,1]\times [0,1]$. The
exact solution of \eqref{poisson_eq_2d} with boundary data \eqref{eq:poisson_eq_2d_bc} is
$u(x,y)= \sin(2\pi x y)$.
The error between the numerical solution and the analytical solution for this 2D linear elliptic boundary value problem is displayed in Figure \ref{fig:poisson2d}. Again, we observe that the performance of the NN method superior to the performance of the other considered algorithms. In all other cases, the errors do not diminish (and some increase significantly) as the mesh is refined. We observe, however, that the NN method seems to stagnate in the finer meshes, and this can be due to the restriction on the matrix condition number.

\begin{figure}[h]
\begin{center}
\includegraphics[width=0.45\textwidth]{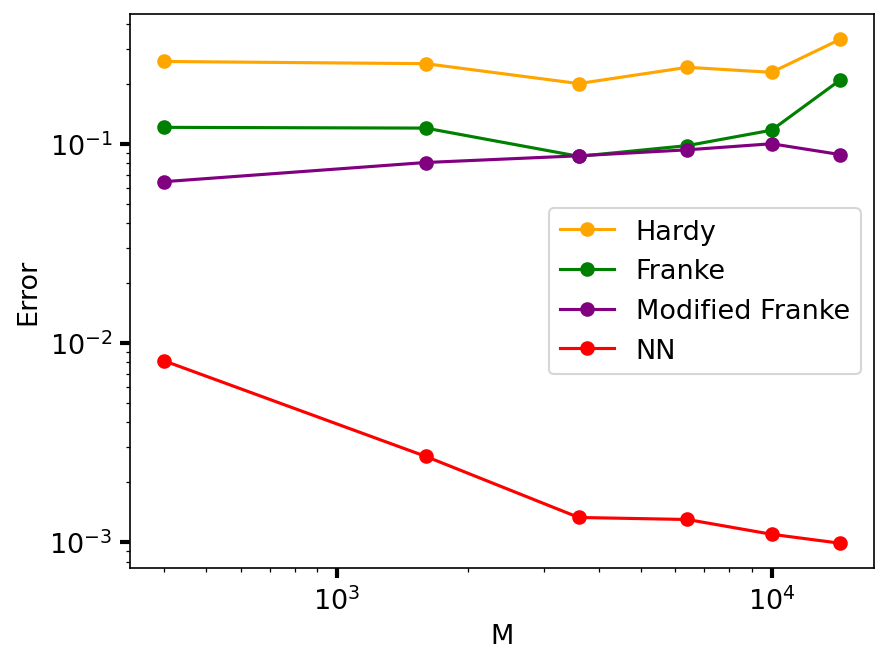}
\end{center}
	\caption{Convergence plot for \eqref{poisson_eq_2d}.}
 \label{fig:poisson2d}
\end{figure}

\subsubsection{2-dimensional heat equation}
\label{sec:time_2d}
We consider the 2D heat equation as follows:
\begin{equation}
\label{eq:heat2d}
  \frac{\partial u}{\partial t} = \mu\left(\frac{\partial ^2 u}{\partial x^2}+\frac{\partial ^2 u}{\partial y^2}\right),~t \in [0,T],
\end{equation}
with the initial condition
\begin{equation}
  u(x,y,0)=\sin(\pi x)\sin(\pi y),
\end{equation}
and the Dirichlet boundary conditions
\begin{equation}
  u(x,y,t)=0,
\end{equation}
Again, as in the previous tests in 2D, the domain is assumed to be a square $[0,1]\times [0,1]$. The exact solution of the above problem is
\begin{equation}
  u(x,y,t)=\sin(\pi x)\sin(\pi y) \exp(-\mu 2 \pi^2 t),~t \in [0,T].
\end{equation}
We assume a final time $T=0.5$, $\Delta t=0.005$ and $\mu =0.01$. Figure \ref{fig:heat2d} shows the convergence plot for 2D heat equation. The results of the adaptive NN for this test are qualitatively similar to the ones seen in the previous tests.
The Hardy, Franke and modified Franke methods produce constant errors, whereas the NN method produces solutions that converge as the number of interpolation points is augmented.

\begin{figure}[h]
\begin{center}
\includegraphics[width=0.45\textwidth]{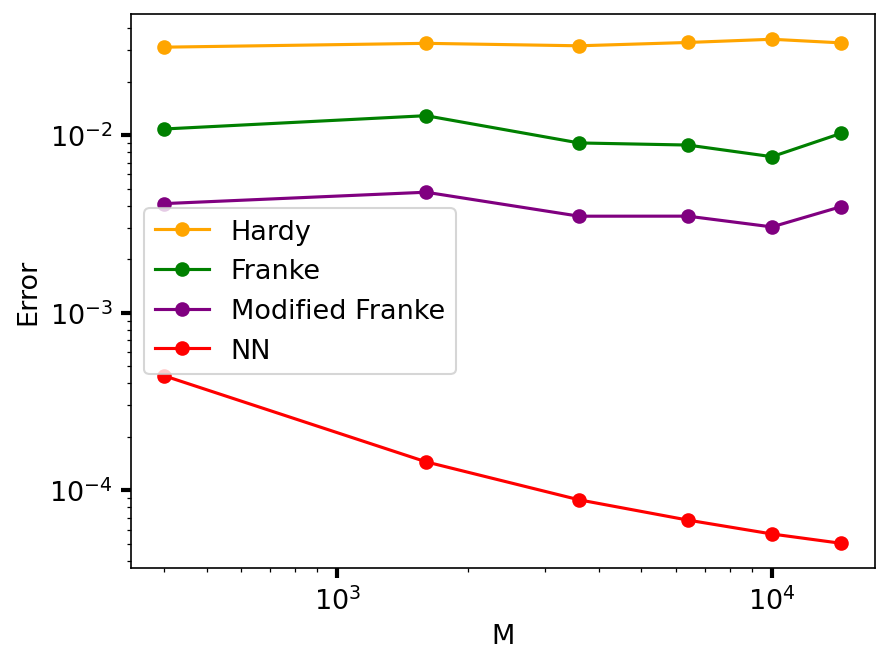}
\end{center}
	\caption{Convergence plot for the 2D heat problem.}
 \label{fig:heat2d}
\end{figure}

\subsubsection{Computational cost comparison}
We finish this section by presenting the computational cost of our method compared to Hardy's, Franke's, modified Franke's, Rippa's and MLE methods. Tables \ref{tab:timings_1d} and \ref{tab:timings_2d} show the results for one of the interpolation tasks, in 1-dimension and 2-dimensions, respectively. The simulation timings reported are from simulations run on the Ohio Supercomputer Center on AMD EPYC 7643 (Milan) CPU processors (Ascend Cluster). We present only the results for the interpolation of $f_1$ and $f_4$ as there is not much difference between the problems when the dimension and $N$ are fixed. The Hardy, Franke and modified Franke methods are much cheaper to evaluate, whereas the NN method requires the evaluation of the trained\footnote{In the reported timings, the time used to train the neural network was not included. In the same machine used to time the simulations, it took 1 hour and 14 minutes to train (without GPU acceleration). This is a one-time cost.} neural network for each stencil of size $N=10$. The Rippa and MLE algorithms are the most expensive methods as they requires the computation of the error vector $E$ over a set of candidate shape parameters $\varepsilon$. In the PDE case, the overhead added by the NN method is only at the beginning of the method, as we consider static meshes. In this case, the computational performance of the methods is very similar for sufficiently large number of interpolation points, as most of the computation is on the evolution of the PDE.

\begin{table}[h]
\begin{center}
\caption{Comparing computational efficiency between different methods in the 1-dimensional interpolation task (interpolating $f_1$). The results are reported in seconds. \label{tab:timings_1d}}
\begin{tabular}{|c | c c c c c c|} 
 \hline
M & Hardy & Franke & Modified Franke & Rippa & MLE & NN\\ [0.5ex] 
 \hline\hline
10 & 1.5e-05 & 9.8e-07 & 7.8e-07                    & 1.3e-01 & 1.3e-01 & 1.2e-03  \\ 
19 & 2.6e-05 & 2.0e-06 & 1.6e-06                    & 2.5e-01 & 2.5e-01 & 1.4e-03  \\ 
37 & 5.2e-05 & 4.0e-06 & 3.2e-06                    & 5.1e-01 & 5.1e-01 & 2.8e-03  \\ 
73 & 1.0e-04 & 8.4e-06 & 6.4e-06                    & 1.0e+00 & 1.0e+00 & 5.9e-03  \\ 
145 & 2.1e-04 & 1.7e-05 & 1.3e-05                    & 2.0e+00 & 2.0e+00 & 1.1e-02  \\ 
289 & 4.2e-04 & 3.2e-05 & 2.6e-05                    & 4.0e+00 & 4.0e+00 & 2.3e-02  \\ 
577 & 8.4e-04 & 6.2e-05 & 5.0e-05                    & 7.8e+00 & 7.8e+00 & 4.5e-02  \\ 
1153 & 1.7e-03 & 1.2e-04 & 1.0e-04                    & 1.6e+01 & 1.6e+01 & 8.9e-02  \\ 
2305 & 3.3e-03 & 2.5e-04 & 2.1e-04                    & 3.1e+01 & 3.1e+01 & 1.8e-01  \\ 
4609 & 6.8e-03 & 5.0e-04 & 4.1e-04                    & 6.3e+01 & 6.3e+01 & 3.6e-01  \\ 
 \hline
\end{tabular} 
\end{center}
\end{table}

\begin{table}[h]
\begin{center}
\caption{Comparing computational efficiency between different methods in the 2-dimensional interpolation task (interpolating $f_4$). The results are reported in seconds. \label{tab:timings_2d}}
\begin{tabular}{|c | c c c c c c|} 
 \hline
 M & Hardy & Franke & Modified Franke & Rippa & MLE & NN\\ [0.5ex] 
 \hline\hline
$20 \times 20$ & 2.4e-01 & 6.7e-01 & 6.7e-01                & 1.2e+02  & 1.2e+02 & 5.3e+00  \\ 
$40 \times 40$ & 9.4e-01 & 2.7e+00 & 2.7e+00                & 5.0e+02  & 5.0e+02 & 1.9e+01  \\ 
$60 \times 60$ & 2.2e+00 & 6.2e+00 & 6.1e+00                & 1.1e+03  & 1.1e+03 & 4.0e+01  \\ 
$80 \times 80$ & 3.8e+00 & 1.1e+01 & 1.1e+01                & 2.0e+03  & 2.0e+03 & 7.0e+01  \\ 
$100 \times 100$ & 5.7e+00 & 1.7e+01 & 1.7e+01                & 3.1e+03  & 3.1e+03 & 1.1e+02  \\ 
$120 \times 120$ & 8.3e+00 & 2.4e+01 & 2.4e+01                & 4.4e+03  & 4.4e+03 & 1.6e+02  \\ 
 \hline
\end{tabular} 
\end{center}
\end{table}

\subsection{Fallback procedure}
In this section, we evaluate numerically the impact of the fallback procedure introduced in Section \ref{sec:fallback}. In Figure \ref{fig:fallback-1d-uniform}, we show the error curves for the interpolation of function $f_1$\footnote{The error curve for function $f_2$ is very similar to the one of $f_1$ and thus is omitted.} and $f_3$ when considering different thresholds, namely $\theta = 12, 16, \infty$. This means that if the NN's prediction leads to a shape parameter larger than $\theta$, we correct the shape parameter by solving optimization problem as in Section \ref{sec:optimization}. We note that after the third point, the fallback procedure is activated when the threshold $\theta=12$. However, when we increase the threshold to $\theta=16$, the fallback procedure is no longer activated, meaning that the NN prediction produces shape parameters that lead to interpolation matrices with condition numbers smaller than $10^{16}$. We also note that the error is degraded when we keep the condition number of the interpolation matrix to be smaller than $11.5$ as the interpolation points get closer together, this is expected as the condition number of the interpolation matrix tends to increase as the points get closer to each other.

To try to answer the question of why is the network producing shape parameters that lead to condition numbers outside of the expected range $[10^{11},10^{11.5}]$, we hypothesize the flagged data points are perhaps out-of-distribution. We plot the distribution of the average of the distance matrix for each input $\vec{x}$ that requires correction, against the average of the distance matrix in the training set, which can be seen in Figure \ref{fig:fallback-1d-uniform} (right) and postulate that these points might be far from the training set points. In Figure \ref{fig:fallback-1d-not-uniform}, we show the same figure for non-equispaced axis and reach similar conclusions.

\begin{figure}[h]
\begin{center}
\includegraphics[width=0.33\textwidth]{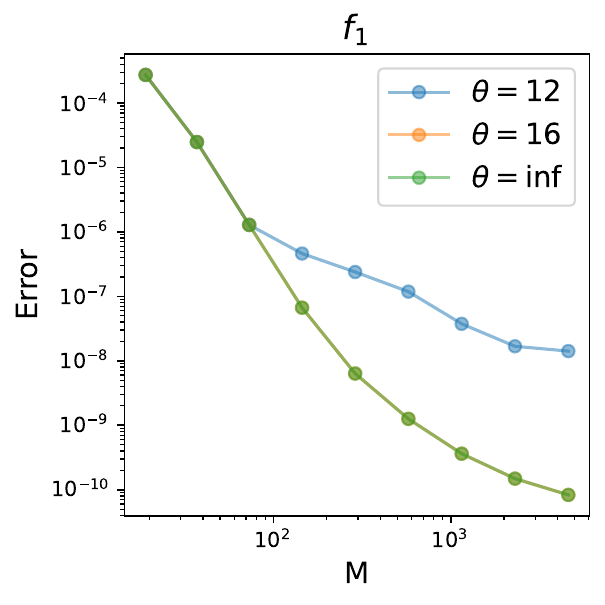}\includegraphics[width=0.33\textwidth]{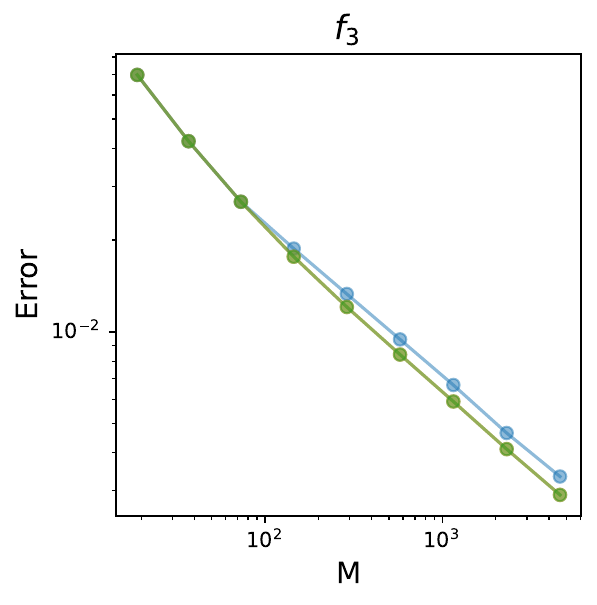}\includegraphics[width=0.31\textwidth]{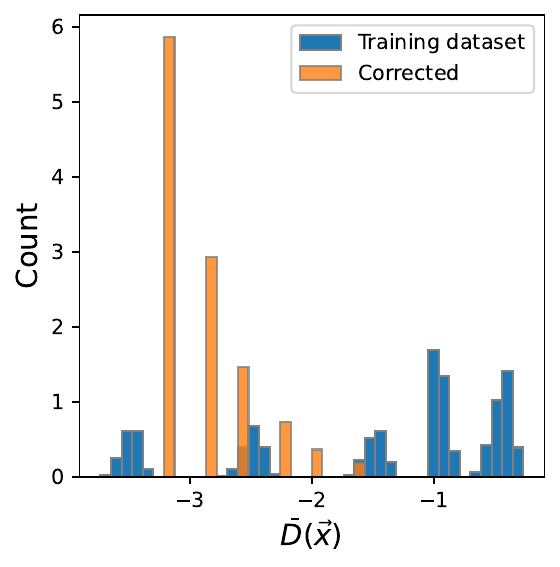}
\end{center}
	\caption{Fallback performance for equispaced interpolation nodes in 1d interpolation tasks. Left: Error convergence plots for functions $f_1$ and $f_3$ subject to different thresholds $\theta$. Right: Distribution of the average distance between the corrected points and the training dataset.}
 \label{fig:fallback-1d-uniform}
\end{figure}

\begin{figure}[h]
\begin{center}
\includegraphics[width=0.33\textwidth]{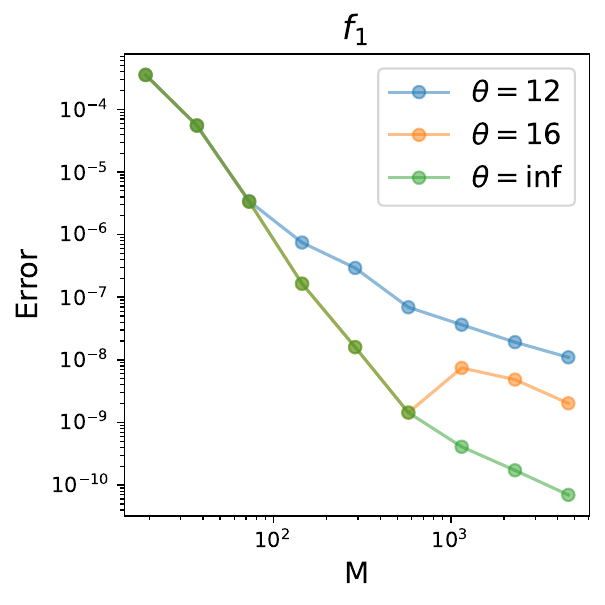}\includegraphics[width=0.33\textwidth]{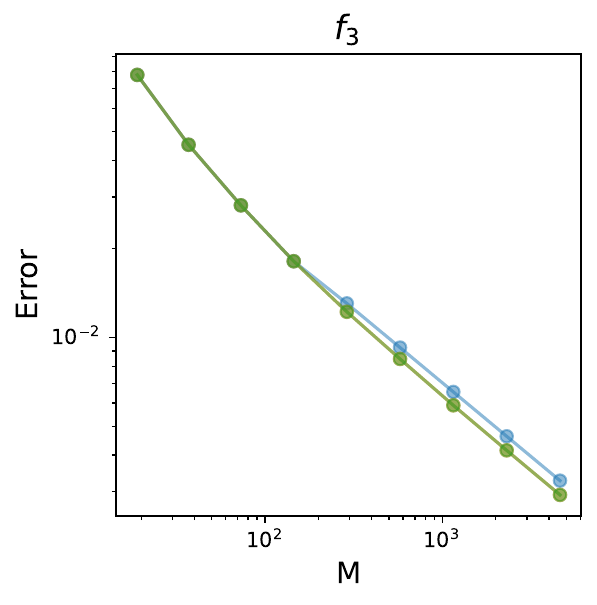}\includegraphics[width=0.33\textwidth]{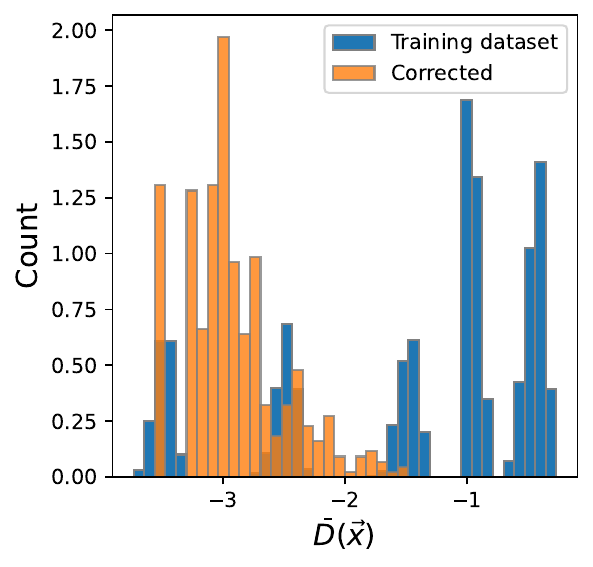}
\end{center}
	\caption{Fallback performance for non-equispaced interpolation nodes in 1d interpolation tasks. Left: Error convergence plots for functions $f_1$ and $f_3$ subject to different thresholds $\theta$. Right: Distribution of the average distance between the corrected points and the training dataset.}
 \label{fig:fallback-1d-not-uniform}
\end{figure}

With this fallback procedure, we have a strong guarantee that the generated interpolation matrix has a controlled condition number which can be useful because it places less importance on the creation of the training dataset. For general tasks, we propose to use $\theta =\infty$, which means that the fallback procedure is only activated when the generated interpolation matrix is truly ill-conditioned. The downside to this strategy is that there is significant overhead if the optimization is required. The performance depends heavily on the initial guess for the optimization procedure. We currently set the initial guess $\varepsilon_{init}$ to be the output of the NN to avoid fine tuning the optimization procedure, however, the most efficient results we found for the more refined meshes (large $M$), was to set $\varepsilon$ to be high, for example, $\varepsilon_{init}=400$.

\subsection{Retraining on unseen data}

The three 1-dimensional interpolation problems $f_i,~  i=1,2,3$, are examined to show the numerical influence of the retraining strategy (Section \ref{sec:retraining}). 

In Figures \ref{fig:retrain_u} and \ref{fig:retrain_nu}, we have displayed the performance of the trained NN, before and after retraining, namely, naive retraining and model merging strategies,  when considering equispaced and non-equispaced interpolation nodes, respectively. 
In the left picture of these figures, we show the error curves before and after retraining strategy. We can see that the results of before retraining strategy are more precise than after retraining strategy,  despite very similar results for $f_3$.
The cause for this is clear,
as the retraining strategy ensures that the logarithm of the condition number of interpolation matrices are less than the threshold $\theta=14$.
Also, the right pictures show  the distribution of the logarithm of the condition number of the interpolation matrices. 
We can see an overall better performance when using model merging strategy for $f_1$.

\begin{figure}[h]
\begin{center}
{\includegraphics[width=0.45\textwidth]{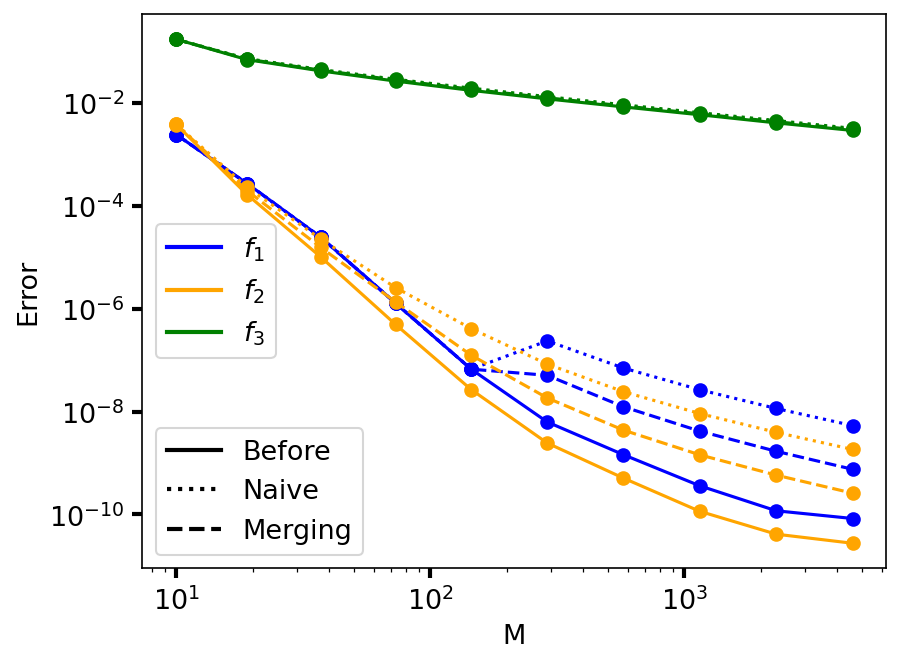}}
{\includegraphics[width=0.45\textwidth]{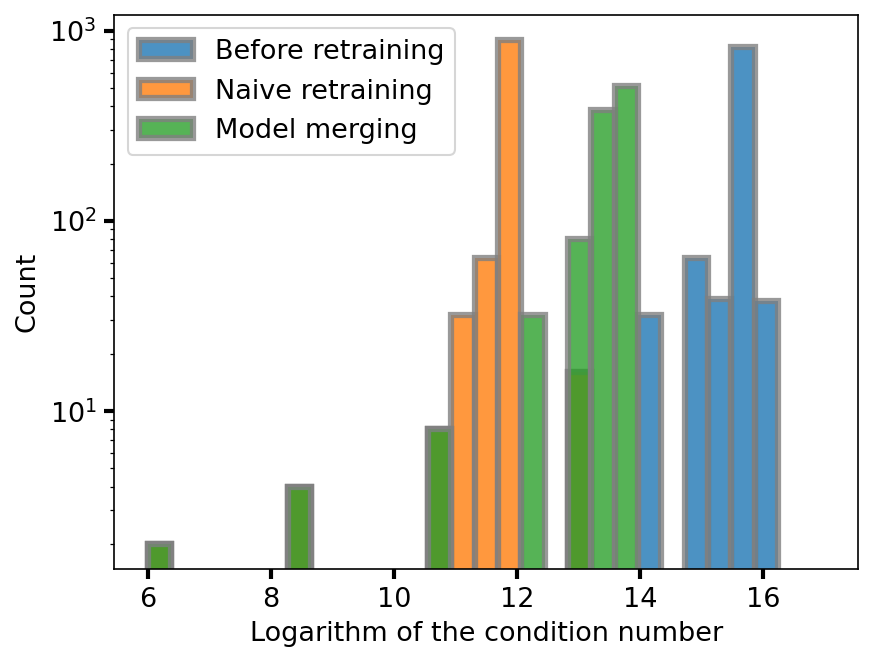}}
\end{center}
	\caption{Retraining performance using naive retraining and model merging strategies for equispaced interpolation nodes in 1d interpolation tasks. Left: Error convergence plots for functions $f_1$, $f_2$ and $f_3$ before retraining and after retraining using  naive retraining and model merging strategies. Right: Distribution of the logarithm of the condition number.}
	\label{fig:retrain_u}
\end{figure}

\begin{figure}[h]
\begin{center}
{\includegraphics[width=0.45\textwidth]{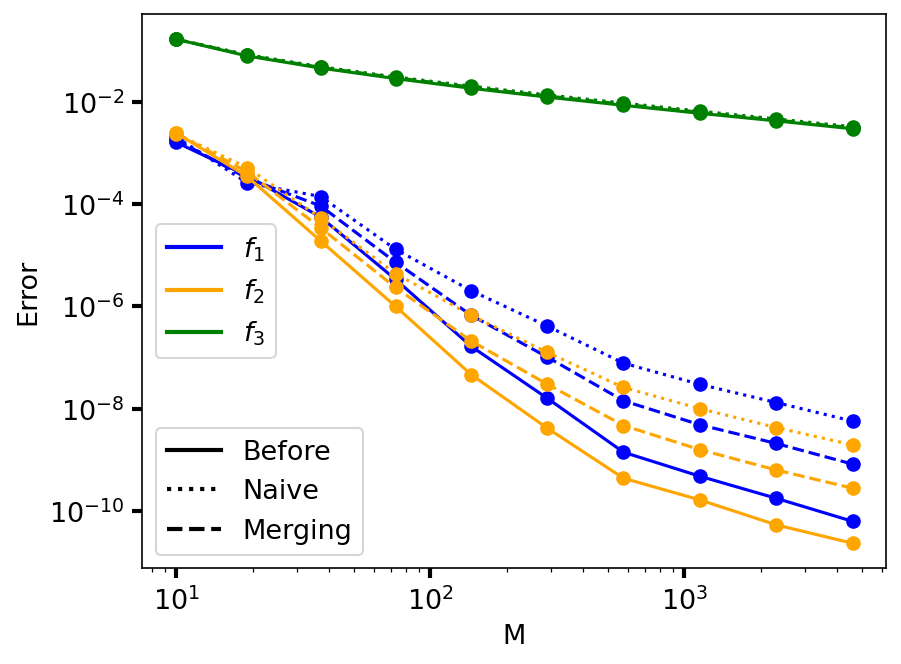}}
{\includegraphics[width=0.45\textwidth]{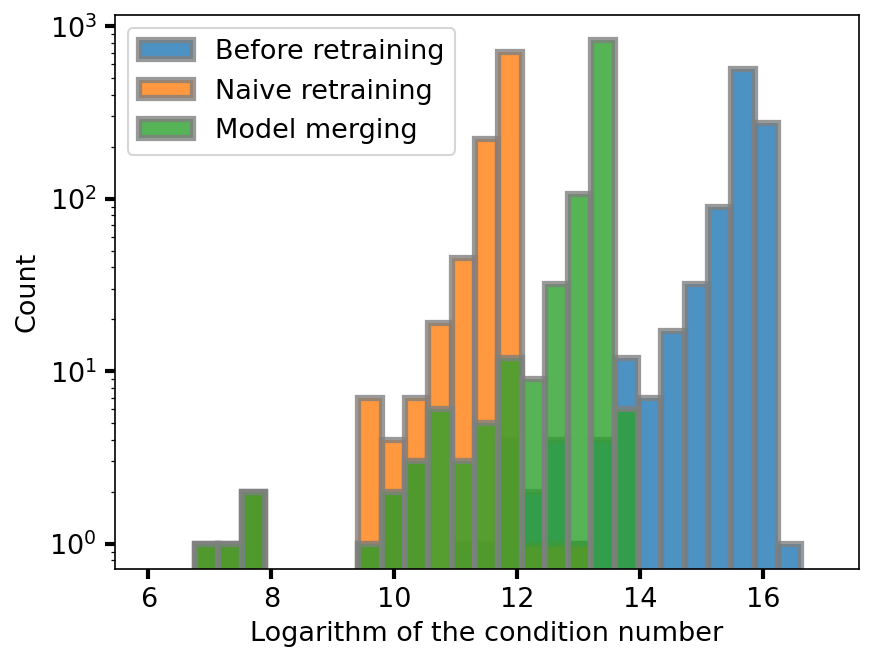}}
\end{center}
	\caption{Retraining performance using naive retraining and model merging strategies for non-equispaced interpolation nodes in 1d interpolation tasks. Left: Error convergence plots for functions $f_1$, $f_2$ and $f_3$ before retraining and after retraining using  naive retraining and model merging strategies. Right: Distribution of the logarithm of the condition number.}
	\label{fig:retrain_nu}
\end{figure}

\section{Conclusion}
\label{sec:conclusion}
In this work, we propose a novel approach to predict a shape parameter for the inverse multiquadric RBFs. The method is based on two parts: i) we derive an optimization problem that attains a suitable shape parameter for any distribution of points $\vec{x}$, ii) we use the optimization problem to generate a dataset and train a NN to predict a suitable shape parameter given any distribution of interpolation points. We focus on a setting where the size of the stencil is fixed and provide a unified strategy for 1-dimensional and 2-dimensional clouds of points. Furthermore, we propose a fallback procedure that guarantees that the generated interpolation matrices (using the predicted shape parameter) remain well-conditioned.
The proposed approach is tested on interpolation tasks and integrated with a RBF-FD method and shows promising results in comparison to other adaptive strategies. Although we are able to strictly guarantee the well-posedness of the generated interpolation matrices, there is a computational overhead when further optimization is necessary. To overcome this, we introduce a retraining strategy that improves the learned model. We were able to show that the retraining strategy does control the condition number of the generated interpolation matrices.

While all our results focused on inverse multiquadratic RBFs, the adaptation of these proposed methods to other positive definite RBFs is straightforward. The optimization problem changes only with respect to the changed interpolation matrix, thus, the data generation, as presented in Algorithm \ref{alg:data_generation_1d}, can be re-used to generate a new dataset. The caveat is that for each different RBF basis and stencil size $N$, a new neural network has to be trained.

There are several future directions for this work: one, we would like to explore is the extension of this method to clouds of points embedded in higher dimensions. Secondly, with the recent advances in the theory of RBFs in the numerical solution of conservation laws (see \cite{glaublitz2023,glaublitz2024}), the integration of our novel method with state-of-the-art energy-stable RBF methods could lead to powerful novel meshless methods for conservation laws.

\section*{Acknowledgments}
This work was supported in part by an allocation of computing time from the Ohio
Supercomputer Center. We wish to thank the anonymous reviewers for their input and feedback. 
\appendix

\section{Computational implementations}
\label{app:dataset}

\subsection{Dataset generation}
The generated training set contains the pairs $(\vec{x},\varepsilon)$, where $\vec{x}$ denotes a set of interpolation nodes and $\varepsilon$ the corresponding shape parameter, that leads to an interpolation matrix with controlled condition number. We randomly sample sets of interpolation nodes $\vec{x}$ and use the optimization procedure described in Section \ref{sec:optimization} to obtain a suitable $\varepsilon$. Algorithm \ref{alg:data_generation_1d} describes the dataset generation: given a domain $I$ (for simplicity, we assume it is $1$-dimensional interval $[a,b]$ up to a $n$-dimensional interval $[a,b]^n$ hypercube), we uniformly sample $N_I$ points, creating a sorted list of points $X=(x_1,...,x_{N_I})$. Then, iterating through the list $X$ $R$ times: we start with $x_1$, find its  $N-1$ nearest neighbors and form $\vec{x}=(x_1,x_1^{(1)},...,x_1^{(N-1)})$. Using $\vec{x}$, we solve \eqref{eq:varepsilon-optimization} to find the corresponding $\varepsilon$. This is repeated $D$ number of times. Then, we obtain a dataset of size\footnote{It is possible that the optimization does not converge, leading to a smaller number of training points. This did not occur in our training.} $R\times D$ with pairs $(\vec{x},\varepsilon)$ that will be used as training set for supervised learning algorithm.

\begin{algorithm}[h]
\label{alg:data_generation_1d}
\textbf{Input:} Domain $I$, the number of points in the domain $N_I$, size of the interpolation node set $N$, number of repeats $D$, use ratio $R$\\
\textbf{Output:} a set of size $R\times D$, where each element is the training data pair $(\vec{x},\varepsilon)$, with $\vec{x}=(x_1,\cdots,x_N)$\\
\SetAlgoLined
\For{$\text{k}\leftarrow 1$ to $D$}{
- Sample $N_I$ points uniformly from domain $I$, generating sample $X = (x_1,...,x_{N_I})$\\
- Sort $X$\\
\For{$i\leftarrow 1$ to $R$}{
- Find the $N-1$ nearest neighbors of $x_i$, and form $\vec{x} = (x_i, x_i^{(1)},x_i^{(2)},...,x_i^{(N-1)})$ where $x_{i}^{(k)}$ denotes the $k$-th nearest neighbour to $x_i$\\
- Initialize the shape parameter with $\varepsilon=\frac{1}{0.815 N \sum\limits_{i=1}^{N} d_i}$, where $d_i$ is the distance of  $x_i$ from the closest point to it\\
- Set $\text{trials}=0$\\
- Set $\text{unsuccessful}=0$
Compute loss using \eqref{eq:loss}\\
\While{$\text{loss} > 10^{-3}$}{
- Solve \eqref{eq:varepsilon-optimization} using Adam stochastic optimizer
\\
- trials = trials + 1\\
\If{$\text{trials} > 40$}
{- unsuccessful = 1\\
- break}
}
\If {$\text{unsuccessful} = 0$}
{Add \vec{x} with corresponding shape parameter $\varepsilon$ to the dataset}
\caption{Dataset generation}
}
}
\end{algorithm}

The 1-dimensional dataset generated is described in Table \ref{table:dataset_1d}. We consider the domains $I=[0,0.01],[0,0.1],[0,1]$. We set $N_I = 10$, $R=1$ and $D=700$. We additionally used three sample functions to evaluate the interpolation error from the generated interpolation matrix as a consistency check -- the interpolation error does not influence the point generation. Thus, we generate $3\times |I|\times R \times D = 6300$ training points, where $|I|$ denotes the number of considered intervals.   
The 2-dimensional dataset is described in Table \ref{table:dataset2d}. We consider the domains $I=[0,0.001]^2,[0,0.01]^2,[0,0.1]^2,[0,1]^2$. We set $N_I = 20$, $R=20$ and $D=50$. We additionally used three sample functions to evaluate the interpolation error from the generated interpolation matrix as a consistency check -- the interpolation error does not influence the point generation. Thus, we generate $3\times |I|\times R \times D = 12000$ training points.
\begin{table}[h]
  \centering
  \caption{One-dimension dataset, with use ratio $R=1$} 
\begin{tabular}{|c|c |c | c|} 
\hline
$u(x)$ & Domain $I$ & $N_I $& $D$\\
\hline
 & $[0,0.01]$ &10&700\\
$\exp(\sin(\pi x))$ & $[0,0.1]$ & 10 & 700 \\
& $[0,1]$ & 10 & 700\\
\hline
 & $[0,0.01] $&  10 & 700\\
$\frac{1}{16x^2+1}$&$[0,0.1]$&10&700\\
& $[0,1]$ & 10 & 700  \\
\hline
$\cos(200\pi x)$ &$[0,0.01]$&10& 700\\
$\cos(20\pi x)$&$[0,0.1]$&10&700\\
$\cos(2\pi x)$&$[0,1]$&10&700\\
\hline
\end{tabular}
	\label{table:dataset_1d}
\end{table}

\begin{table}[h]
\scriptsize
  \centering
  \caption{Two-dimension dataset, with use ratio $R=N_I$. 
	\label{table:dataset2d}} 
\begin{tabular}{|c|c|c|c|} 
\hline
$u(x,y)$&Domain& $N_I$ & $D$\\
\hline
 & $[0,0.001]^2$ &20&50\\
$(1+ \exp(-\frac{1}{\kappa})-\exp(-\frac{x}{\kappa})-\exp(\frac{x-1}{\kappa}))(1+ \exp(-\frac{1}{\kappa})-\exp(-\frac{y}{\kappa})-\exp(\frac{y-1}{\kappa}))$&$[0,0.01]^2$&20&50\\
$\kappa=0.1$ & $[0,0.1]^2$ &20&50\\
& $[0,1]^2$&20&50\\
\hline
 & $[0,0.001]^2$&20&50\\
$(1+ \exp(-\frac{1}{\kappa})-\exp(-\frac{x}{\kappa})-\exp(\frac{x-1}{\kappa}))(1+ \exp(-\frac{1}{\kappa})-\exp(-\frac{y}{\kappa})-\exp(\frac{y-1}{\kappa}))$ & $[0,0.01]^2$&20&50\\
 $\kappa=1$ & $[0,0.1]^2$&20&50\\
& $[0,1]^2$&20&50\\
\hline
 & $[0,0.001]^2$&20&50\\
$\frac{3}{4} \exp\big(-( \frac{(9x-2)^2+(9y-2)^2}{4})\big)+\frac{3}{4} \exp\big(-( \frac{(9x+1)^2}{49}+\frac{(9y+1)^2}{10})\big)$   & $[0,0.01]^2$ &   20 &50 \\
$+\frac{1}{2} \exp\big(-( \frac{(9x-7)^2+(9y-3)^2}{4})\big)
 -\frac{1}{5} \exp\big(-( (9x-4)^2+(9y-7)^2)\big)$ & $[0,0.1]^2$&20&50\\
& $[0,1]^2$&20&50\\
\hline
\end{tabular}
\end{table}

\subsection{Fall-back scheme}
Algorithm \ref{alg:fallback} describes the fallback procedure, to guarantee that the interpolation matrix generated has a bounded condition number, smaller than the user-defined threshold $\theta$.

\begin{algorithm}[h]
\caption{Shape prediction with fallback 
\label{alg:fallback}} 
\textbf{Input:} Set of interpolation notes $\vec{x}\in\mathbb{R}^{N\times n}$, threshold $\theta$ \\
\textbf{Output:} $\varepsilon$ \\
\SetAlgoLined
 \vec{d} $\leftarrow$ generate features from $\vec{x}$ \\
 $\varepsilon \leftarrow \mathcal{F}(\vec{d})$\\
\If{$ \mbox{cond}(A(\vec{x},\varepsilon)) > \theta$}
{ $\varepsilon \leftarrow$ solve minimization as in \eqref{eq:varepsilon-optimization} 
}
\end{algorithm}

\revisionthree{
\subsection{Reproducibility}
The numerical experiments presented in Section \ref{sec:numerical} and in \ref{ap:rippa_mle} can be reproduced by running the jupyter notebooks provided in our GitHub \cite{ourgithub}.
}

\section{Rippa and MLE setups}
\label{ap:rippa_mle}
In this section, we provide some experiments to justify some engineering choices adopted in this paper for the Rippa and MLE methods. 

One source of variability for the performance of both the Rippa and MLE methods is the necessity to establish a set of candidate $\varepsilon$'s (more recent works, such as \cite{sergey} consider an adaptive candidate set). We consider the following candidate sets:
\begin{align*}
     A&:= \{0.001, 0.002, 0.005, 0.0075, 0.01, 0.02, 0.05, 0.075, 0.1, 0.2, 0.5, 0.75,1,2, 5, 7.5,\\ & \quad \quad \quad 10, 20.0, 50, 75, 100, 200, 500, 1000\} \\
     B&:= \{0.001, ..., 30 \} \quad \mbox{200 equidistant points} \\
     C&:=  B \cup \{ 50, 75, 100, 200, 500, 1000 \}.
\end{align*}

In Figure \ref{fig:candidate_sets}, we show the approximation of the simple function 
\[f(x) = \cos(2/\delta \pi x) + x^2 + x,\]
where $\delta = 0.001,0.01,0.1$. The candidate sets that appear to perform the best are sets $A$ and $C$. In particular, as the interpolation interval diminishes, the chose $\varepsilon$ can lead to an unstable interpolation, as seen in the second and third rows, when $\delta = 0.01 $ and $\delta=0.001$. 

\begin{figure}
\begin{center}
\includegraphics[width=0.45\textwidth]{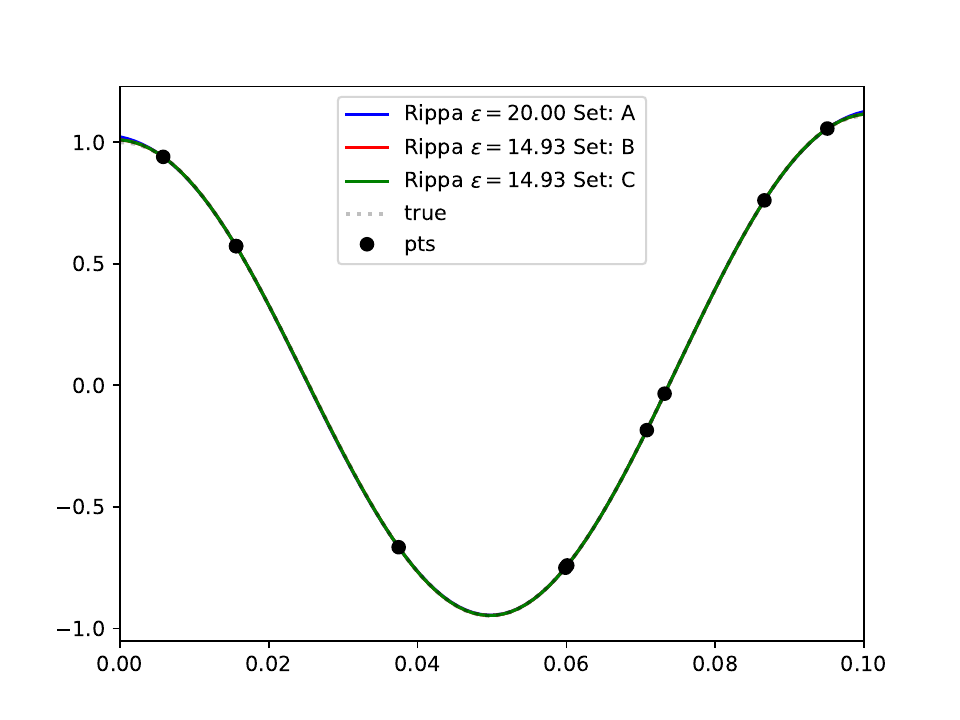}\includegraphics[width=0.45\textwidth]{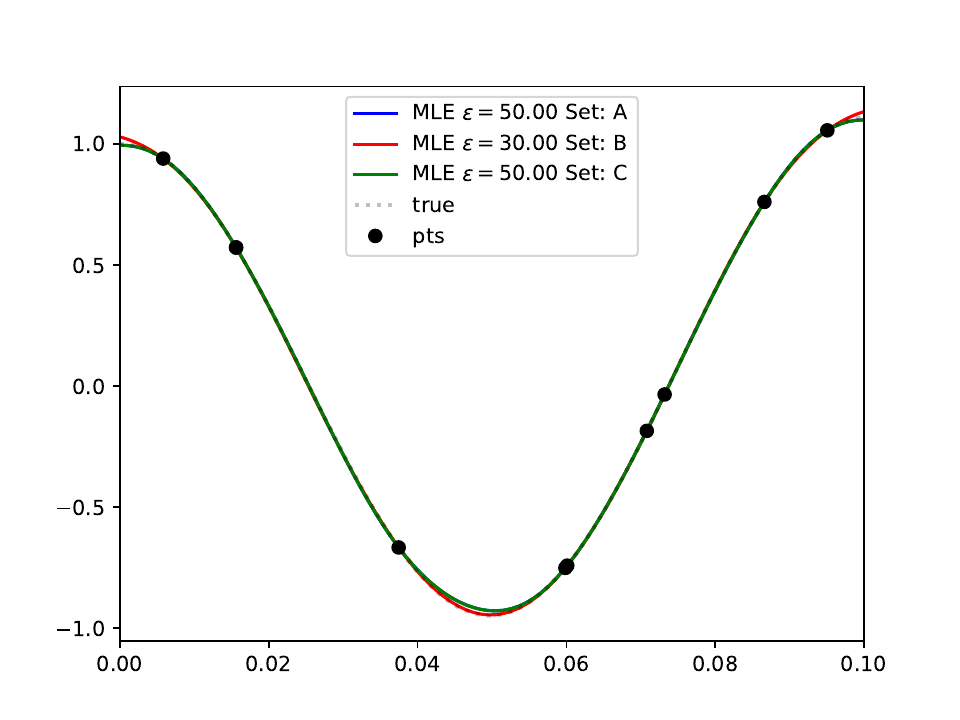}\\
\includegraphics[width=0.45\textwidth]{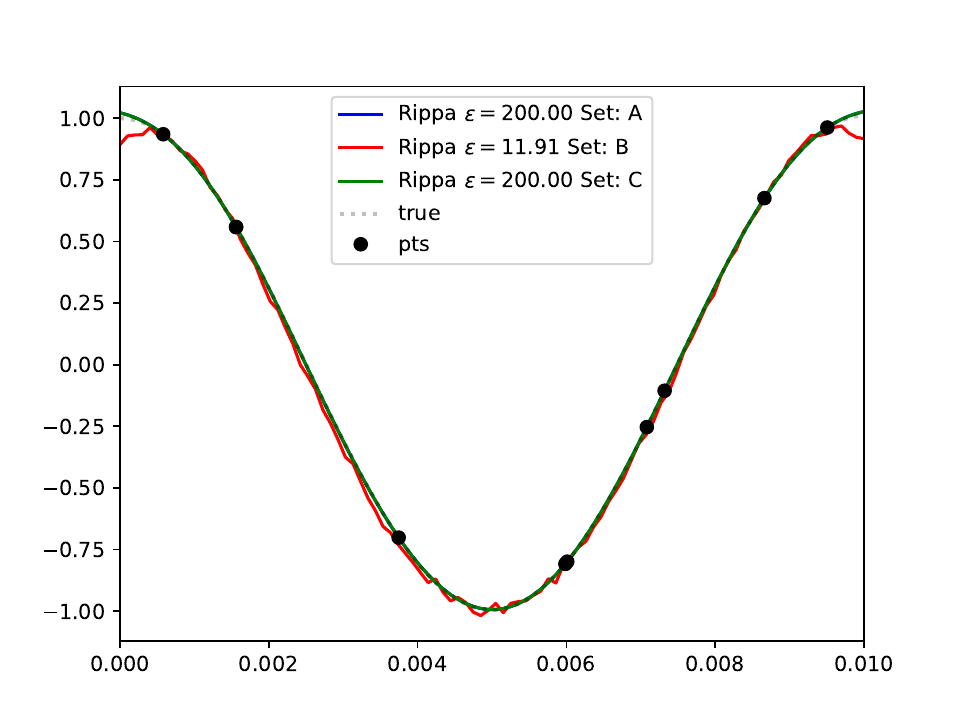}
\includegraphics[width=0.45\textwidth]{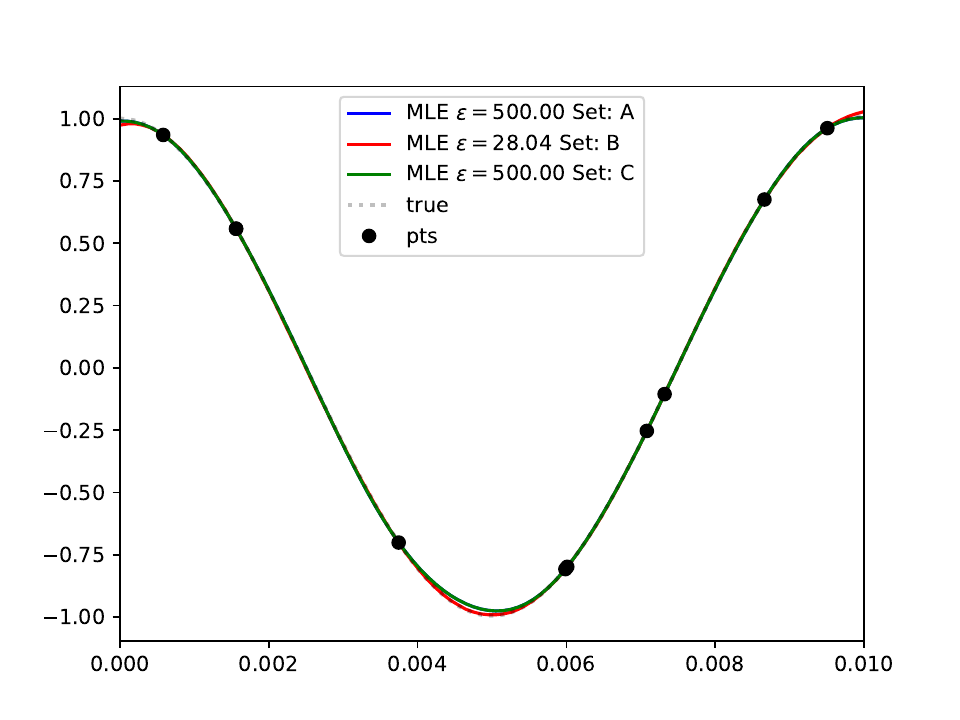}\\
\subfigure[Rippa]{\includegraphics[width=0.45\textwidth]{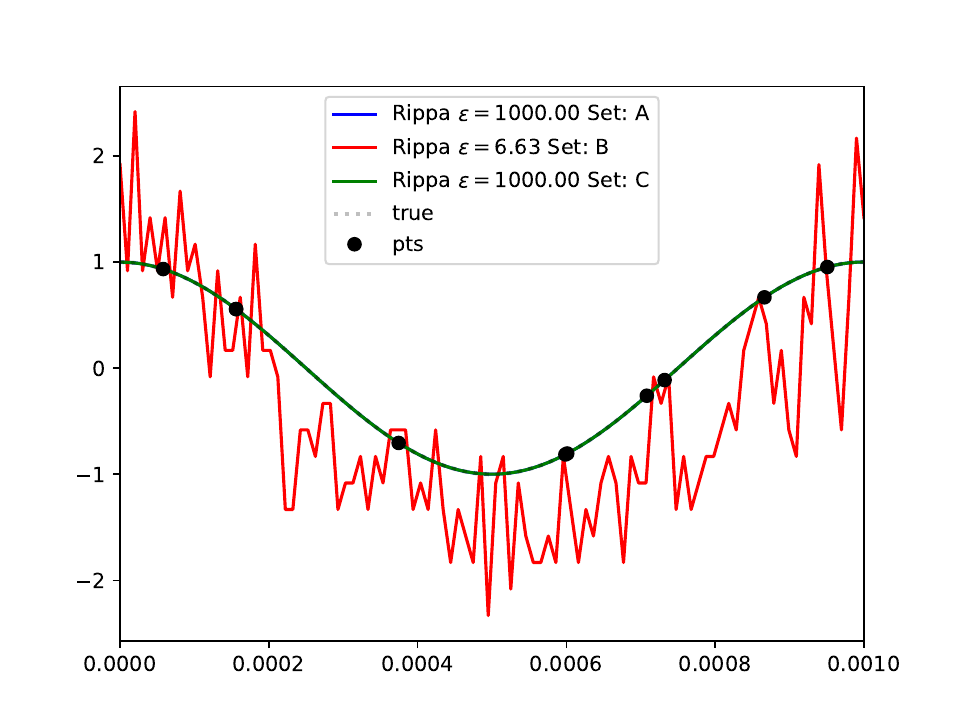}}
\subfigure[MLE]{\includegraphics[width=0.45\textwidth]{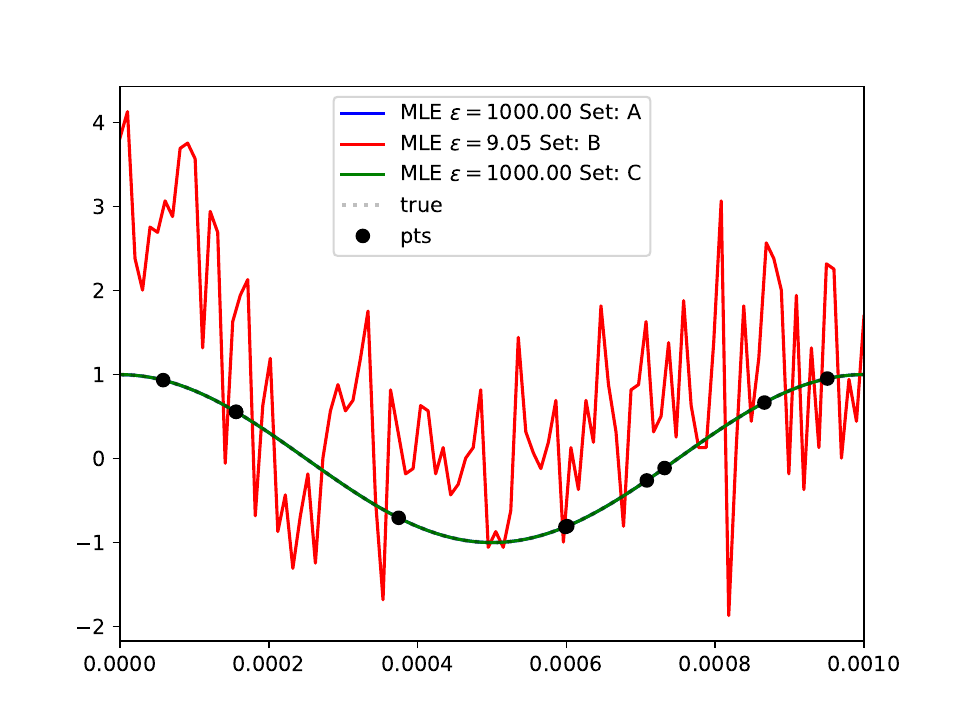}}
\end{center}
\caption{Comparison of the performance of the Rippa and MLE shape parameter selection, varying the candidate sets and the total length of the approximation interval. Row 1: $\delta = 0.1$, row 2:  $\delta = 0.01$ and row 3:  $\delta = 0.001$.}
\label{fig:candidate_sets}
\end{figure}

Additionally, it was noted that the interpolation matrix can become numerically ill-conditioned for some $\varepsilon$, which affects the approximation error of the generated approximator. In Figures \ref{fig:stability_rippa}, we show the error curves for the approximations using Rippa and MLE methods on the 1D interpolation problems using uniform grids (Subsection \ref{sec:interpolation_1d}), while limiting the maximum interpolation matrix condition. While the MLE method is unchanged, the Rippa method appears to be more susceptible to this stability criteria. This can also be seen in Figure \ref{fig:candidate_sets}, left figure in row 2, we did not disregard $\varepsilon$ that lead to an interpolation matrix with a large condition number and the interpolation has some oscillations. During our experiments we limit the maximum of the condition of the interpolation matrix to be $10^{16}$.

\begin{figure}
\begin{center}
\includegraphics[width=0.33\textwidth]{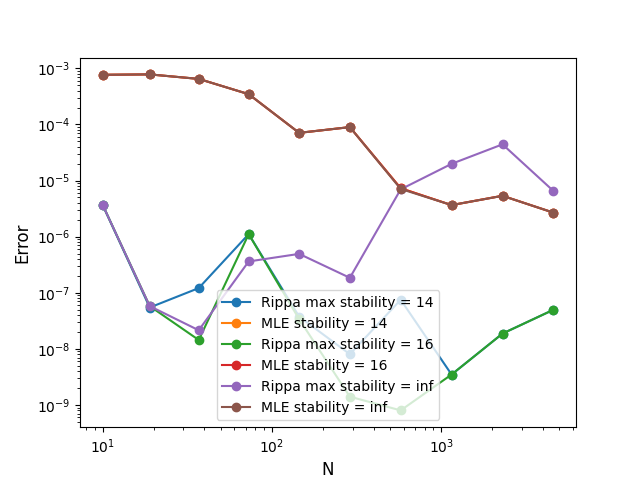}\includegraphics[width=0.33\textwidth]{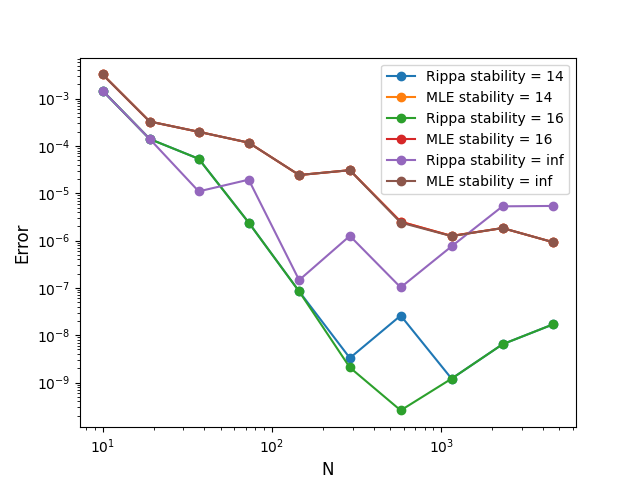}
\includegraphics[width=0.33\textwidth]{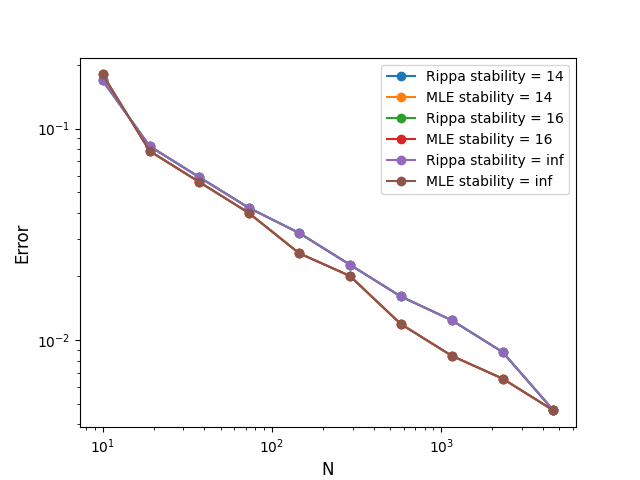}
\end{center}
\caption{Comparison of the performance of the Rippa and MLE methods varying the maximum acceptable condition for the generated interpolation matrix.}
\label{fig:stability_rippa}
\end{figure}

Lastly, excluding $\varepsilon$ that lead to interpolation matrices that have a condition number larger than $10^{16}$, we also evaluated the effect of the candidate sets on the error convergence for the 1-dimensional problems. In Figure \ref{fig:convergence_rippa}, we can note that the candidate sets $A$ and $C$ performance's are quite similar in some regimes (for small $M$ or very large $M$), while the candidate set $B$ does not find a suitable $\varepsilon$ when $M$ increases.

\begin{figure}
\begin{center}
\includegraphics[width=0.45\textwidth]{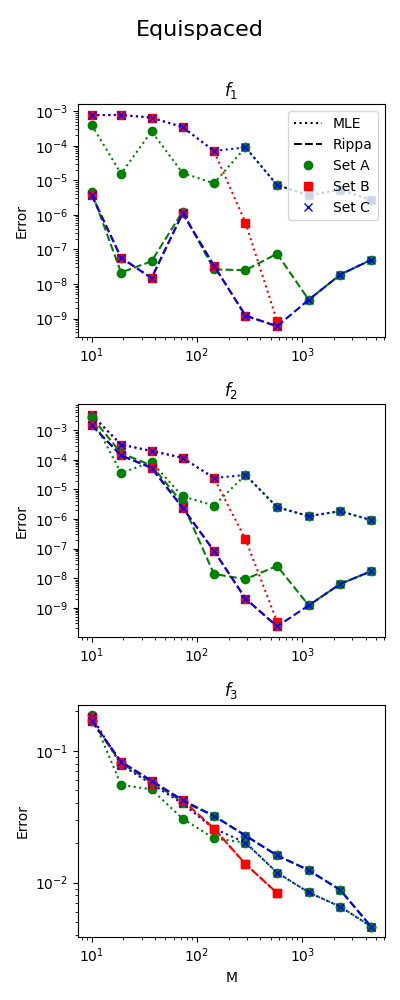}\includegraphics[width=0.45\textwidth]{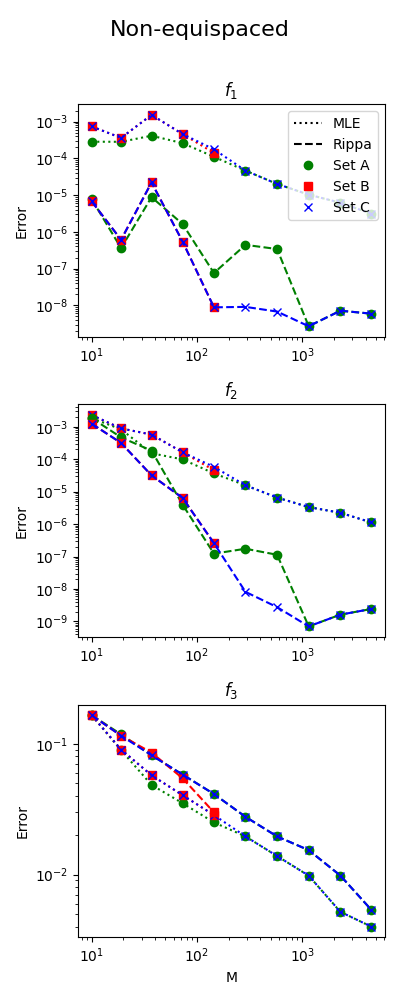}
\end{center}
\caption{Comparison of the performance of the Rippa and MLE shape parameter selection, varying the candidate sets on the 1-dimensional interpolation problems.}
\label{fig:convergence_rippa}
\end{figure}

\bibliographystyle{elsarticle-num}

\begin{thebibliography}{00}
\bibitem{Wendland2004} H. Wendland,
Scattered data approximation, Cambridge Monographs on Applied and Computational Mathematics, Cambridge University Press, 2004.

\bibitem{Carlson1991} R.E. Carlson, T.A. Foley, The parameter R2 in multiquadric interpolation, Comput. and Math. with Appl. 21 (9) (1991) 29--42. 
\underline{https://doi.org/10.1016/0898-1221(91)90123-L}.

\bibitem{Hardy1971} R.L. Hardy, Multiquadric equations of topography and other irregular surfaces, J. of Geophys. Res. 76 (8) (1971) 1905--1915. 
\underline{https://doi.org/10.1029/JB076i008p01905}.

\bibitem{Franke1982} R. Franke, Scattered data interpolation: tests of some methods, Math. of Comput. 38 (157) (1982) 181--200. 
\underline{https://doi.org/10.1090/S0025-5718-1982-0637296-4}.


\bibitem{modifiedfranke} L.H. Kuo, On the selection of a good shape parameter for RBF approximation and its application for solving PDEs, The University of Southern Mississippi, 2015.

\bibitem{Foley1987} T.A. Foley, Interpolation and approximation of 3-D and 4-D scattered data, Comput. and Math. with Appl. 13 (8) (1987) 711--740. \underline{https://doi.org/10.1016/0898-1221(87)90043-5}.

\bibitem{Kansa1992}E.J. Kansa, R.E. Carlson, Improved accuracy of multiquadric interpolation using variable shape parameters, Comput. and Math. with Appl. 24 (12) (1992) 99--120. 
\underline{https://doi.org/10.1016/0898-1221(92)90174-G}.

\bibitem{Rippa1999} S. Rippa, An algorithm for selecting a good value for the parameter c in radial basis function interpolation, Adv. in Comput. Math. 11 (1999) 193--210. 
\underline{https://doi.org/10.1023/A:1018975909870}.

\bibitem{goldberg1996} M.A. Golberg, C.S. Chen, S.R. Karur, Improved multiquadric approximation for partial differential equations, Eng. Anal. with Bound. Elem. 18 (1) (1996) 9--17. 
\underline{https://doi.org/10.1016/S0955-7997(96)00033-1}.

\bibitem{Scheuerer2011} M. Scheuerer, An alternative procedure for selecting a good value for the parameter c in RBF-interpolation, Adv. in Comput. Math. 34 (1) (2011) 105--126. 
\underline{https://doi.org/10.1007/s10444-010-9146-3}.

\bibitem{Marchetti2021} F. Marchetti, 2021. The extension of Rippa’s algorithm beyond LOOCV. Appl. Math. Lett. 120, 107262. 
\underline{https://doi.org/10.1016/j.aml.2021.107262}.

\bibitem{marchetti2022} L. Ling, F. Marchetti, 2022. A stochastic extended Rippa’s algorithm for LpOCV. Appl. Math. Lett. 129, 107955. 
\underline{https://doi.org/10.1016/j.aml.2022.107955}.

\bibitem{Fornberg2007} B. Fornberg, J. Zuev, The Runge phenomenon and spatially variable shape parameters in RBF interpolation, Comput. and Math. with Appl. 54 (3) (2007) 379--398.
\underline{https://doi.org/10.1016/j.camwa.2007.01.028}.

\bibitem{zhang2007} G.E. Fasshauer, J.G. Zhang, On choosing “optimal” shape parameters for RBF approximation, Numer. Algorithms. 45 (2007) 345--368.
\underline{https://doi.org/10.1007/s11075-007-9072-8}.

\bibitem{koupaei2019} J.A. Koupaei, M. Firouznia, S.M.M. Hosseini, Finding a good shape parameter of RBF to solve PDEs based on the particle swarm optimization algorithm, Alex. Eng. J. 57 (4) (2018) 3641--3652.
\underline{https://doi.org/10.1016/j.aej.2017.11.024}.

\bibitem{sun2023} J. Sun, L. Wang, D. Gong, 2023. Model for choosing the shape parameter in the multiquadratic radial basis function interpolation of an arbitrary sine wave and its application. Math. 11 (8), 1856.
\underline{https://doi.org/10.3390/math11081856}.

\bibitem{cavoretto2024-1} R. Cavoretto, A. De Rossi, S. Lancellotti, F. Romaniello, 2024. Parameter tuning in the radial kernel-based partition of unity method by Bayesian optimization. J. of Comput. and Appl. Math. 451, 116108.
\underline{https://doi.org/10.1016/j.cam.2024.116108}.

\bibitem{cavoretto2024-2} R. Cavoretto, A. De Rossi, S. Lancellotti, 2024. Bayesian approach for radial kernel parameter tuning. J. of Comput. and Appl. Math. 441, 115716.
\underline{https://doi.org/10.1016/j.cam.2023.115716}.

\bibitem{wenzel2022} T. Wenzel, G. Santin, B. Haasdonk, Analysis of Target Data-Dependent Greedy Kernel Algorithms: Convergence Rates for f-, f.{P}- and f/{P}-{Greedy}, Constr. Approx. 57 (1) (2023) 45--74.
\underline{https://doi.org/10.1007/s00365-022-09592-3}.

 \bibitem{dutta2021} S. Dutta, M.W. Farthing, E. Perracchione, G. Savant, M. Putti, 2021. A greedy non-intrusive reduced order model for shallow water equations. J. of Comput. Phys. 439, 110378.
 \underline{https://doi.org/10.1016/j.jcp.2021.110378}.

\bibitem{wenzel2024} T. Wenzel, F. Marchetti, E. Perracchione, Data-driven kernel designs for optimized greedy schemes: A machine learning perspective, SIAM J. on Sci. Comput. 46 (1) (2024) C101--C126.
\underline{https://doi.org/10.1137/23M1551201}.

\bibitem{Larsson2024} E. Larsson, R. Schaback, Scaling of radial basis functions, IMA J. of Numer. Anal. 44 (2) (2024) 1130--1152. 
\underline{https://doi.org/10.1093/imanum/drad035}.

\bibitem{Mojarrad2023} F.N. Mojarrad, M.H. Veiga,  J.S. Hesthaven, P. Öffner, A new variable shape parameter strategy for RBF approximation using neural networks, Comput. and Math. with Appl. 143 (2023) 151--168.
\underline{https://doi.org/10.1016/j.camwa.2023.05.005}.

\bibitem{flyer2016} N. Flyer, B. Fornberg, V. Bayona, G.A. Barnett, On the role of polynomials in {RBF-FD} approximations: I. Interpolation and accuracy, J. of Comput. Phys. 321 (2016) 21--38.
\underline{https://doi.org/10.1016/j.jcp.2016.05.026}.

\bibitem{schaback1995} R. Schaback, Error estimates and condition numbers for radial basis function interpolation, Adv. in Comput. Math. 3 (3) (1995) 251--264.
\underline{https://doi.org/10.1007/BF02432002}.

\bibitem{Fasshauer2007} G.E. Fasshauer, Meshfree approximation methods with {MATLAB}, World Scientific, 2007.

\bibitem{driscoll2002} T.A. Driscoll, B. Fornberg, Interpolation in the limit of increasingly flat radial basis functions, Comput. and Math. with Appl. 43 (3-5) (2002) 413--422.
\underline{https://doi.org/10.1016/S0898-1221(01)00295-4}.

\bibitem{barthelme2021} S. Barthelm\'{e}, K. Usevich, Spectral properties of kernel matrices in the flat limit, SIAM J. on Matrix Anal. and Appl. 42 (1) (2021) 17--57.
\underline{https://doi.org/10.1137/19M129677X}.

\bibitem{Nowlan1992} S. J. Nowlan, G. E. Hinton, Simplifying neural networks by soft weight-sharing, Neural Comput. 4 (1992) 473–-493. \underline{https://doi.org/10.1162/neco.1992.4.4.473}.

\bibitem{Kingma2014} D. P. Kingma, J. Ba, Adam: A method for stochastic optimization, CoRR abs/1412.6980 (2014) \underline{http://arxiv.org/abs/1412.6980. arXiv:1412.6980}.


\bibitem{onlinelearning} S. Shalev-Shwartz, Online learning and online convex optimization, Found. and Trends® in Mach. Learn. 4 (2) (2012) 107--194.
\underline{http://dx.doi.org/10.1561/2200000018}.

\bibitem{Kirkpatrick2017} J. Kirkpatrick, R. Pascanu,  N. Rabinowitz, J. Veness, G. Desjardins, A.A. Rusu, K. Milan, J. Quan, T. Ramalho, Grabska-A. Barwinska, D. Hassabis, Overcoming catastrophic forgetting in neural networks, Proc. of the Natl. Acad. of Sci. 114 (13) (2017) 3521--3526.
\underline{https://doi.org/10.1073/pnas.1611835114}.

\bibitem{ourgithub} Maria Han Veiga, Faezeh Nassajian Mojarrad, Fatemeh Nassajian Mojarrad, Robust {RBF} shape parameter github. 
\underline{https://github.com/hanveiga/RBF-shape-parameter-NN}, (2024).

\bibitem{pysmo} A. Lee, J.H. Ghouse, J.C. Eslick, C.D. Laird, J.D. Siirola, M.A. Zamarripa, D. Gunter, J.H. Shinn, A.W. Dowling, D. Bhattacharyya, L.T. Biegler, A.P. Burgard, D.C. Miller, 2021. The IDAES process modeling framework and model library—Flexibility for process simulation and optimization. J. of Adv. Manuf. and Process. 3 (3), e10095.
\underline{https://doi.org/10.1002/amp2.10095}.

\bibitem{Fasshauer2002} G.E. Fasshauer, Newton iteration with multiquadrics for the solution of nonlinear PDEs, Comput. and Math. with Appl. 43 (3-5) (2002) 423--438.
\underline{https://doi.org/10.1016/S0898-1221(01)00296-6}.

\bibitem{Gartland2000} G.E. Fasshauer, E.C. Gartland, J.W. Jerome, Newton iteration for partial differential equations and the approximation of the identity, Numer. Algorithms. 25 (1) (2000) 181--195.
\underline{https://doi.org/10.1023/A:1016609007255}.

\bibitem{Jankowska2018} M.A. Jankowska, A. Karageorghis, C.S. Chen. Improved {K}ansa {RBF} method for the solution of nonlinear boundary value problems, Eng. Anal. with Bound. Elem. 87 (2018) 173--183.
\underline{https://doi.org/10.1016/j.enganabound.2017.11.012}.

\bibitem{Gonzalez2002} R.C. Gonzalez, R.E. Woods, Digital image processing, Prentice-Hall Inc, 2002.

\bibitem{Mortensen2000} E.N. Mortensen, L.J. Reese, W.A. Barrett, Intelligent selection tools, Proc. IEEE Conf. on Computerv. and Pattern Recognit. 2 (2000) 776--777.
\underline{https://doi.org/10.1109/CVPR.2000.854953}.

\bibitem{Tolstykh2003} A.I Tolstykh, D.A. Shirobokov, On using radial basis functions in a “finite difference mode” with applications to elasticity problems, Comput. Mech. 33 (1) (2003) 68--79.
\underline{https://doi.org/10.1007/s00466-003-0501-9}.

\bibitem{igor-rbf} I. Tominec, E. Breznik, 2021. An unfitted {RBF-FD} method in a least-squares setting for elliptic PDEs on complex geometries. J. of Comput. Phys. 436, 110283.
\underline{https://doi.org/10.1016/j.jcp.2021.110283}.

\bibitem{Gear1967} C. W. Gear, The Numerical Integration of ordinary differential equations, Math. of Comput. 21 (98) (1967) 146--156.
\underline{https://doi.org/10.2307/2004155}.

\bibitem{Mishra2019} P.K. Mishra, G.E. Fasshauer, M.K. Sen, L. Ling, A stabilized radial basis-finite difference (\uppercase{RBF-FD}) method with hybrid kernels, Comput. and Math. with Appl. 77 (9) (2019) 2354--2368.
\underline{https://doi.org/10.1016/j.camwa.2018.12.027}.

\bibitem{wright2006} G.B. Wright, B. Fornberg, Scattered node compact finite difference-type formulas generated from radial basis functions, J. of Comput. Phys. 212 (1) (2006) 99--123.
\underline{https://doi.org/10.1016/j.jcp.2005.05.030}.

\bibitem{glaublitz2023} J. Glaubitz, J. Nordstr{\"o}m, P. {\"O}ffner, 2024. Energy-stable global radial basis function methods on summation-by-parts form. J. of Sci. Comput. 98 (1), 30.
\underline{https://doi.org/10.1007/s10915-023-02427-8}.

\bibitem{glaublitz2024} J. Glaubitz, S.C. Klein, J. Nordstr\"{o}m, P. \"{O}ffner, 2024. Summation-by-parts operators for general function spaces: The second derivative. J. of Comput. Phys. 504, 112889.
\underline{https://doi.org/10.1016/j.jcp.2024.112889}.

\bibitem{stein99} M. Stein. Interpolation of spatial statistics. Springer Series in Statistics Springer-Verlag, 1999.

\bibitem{sergey} R. Cavoretto, A. De Rossi, M. S. Mukhametzhanov, Ya. D. Sergeyev. On the search of the shape parameter in radial basis functions using univariate global optimization methods.  J Glob Optim 79, 305–327 (2021). \underline{https://doi.org/10.1007/s10898-019-00853-3}

\end{thebibliography}

\end{document}